\documentclass[12pt]{amsart}

\usepackage[english]{babel}
\usepackage[toc,page]{appendix}
\parindent=0.pt
\usepackage{amsmath}
\usepackage{amsthm}
\usepackage{amssymb}
\usepackage{mathrsfs}
\usepackage{enumerate}
\usepackage{bookmark}
\usepackage[notcite, final, notref]{showkeys}
\usepackage{dsfont}
\usepackage[dvips]{color}
\topmargin=-10mm \oddsidemargin=0mm \evensidemargin=0mm
\textheight=230mm \textwidth=160mm
\newcommand{\suchthat}{\;\ifnum\currentgrouptype=16 \middle\fi|\;}

\newtheorem{theorem}{Theorem}[section]
\newtheorem{problem}[theorem]{Problem}

\newtheorem{proposition}[theorem]{Proposition}
\newtheorem{corollary}[theorem]{Corollary}
\newtheorem{definition}[theorem]{Definition}

\theoremstyle{definition}
\newtheorem{remark}[theorem]{Remark}

\allowdisplaybreaks

\newcommand{\R}{\mathcal{R}}

\usepackage{tikz}
\usepackage[siunitx]{circuitikz}

\usepackage{xcolor}

\title[Functions of Fueter variables]{A general setting for functions of Fueter variables: differentiability, rational functions, Fock module and related topics}

\author[D. Alpay]{Daniel Alpay}
\address{(DA) 
Faculty of Mathematics, Physics, and Computation\\
Schmid College of Science and Technology\\
Chapman University\\
One University Drive\\
Orange, California 92866\\
USA}
\email{alpay@chapman.edu}

\author[I. L. Paiva]{Ismael L. Paiva}
\address{(ILP)
Schmid College of Science and Technology\\
Chapman University\\
One University Drive\\
Orange, California 92866\\
USA}
\email{depaiva@chapman.edu}

\author[D. C. Struppa]{Daniele C. Struppa}
\address{(DCS) 
Faculty of Mathematics, Physics, and Computation\\
Schmid College of Science and Technology\\
Chapman University\\
One University Drive\\
Orange, California 92866\\
USA}
\email{struppa@chapman.edu}

\begin{document}

\begin{abstract}
We develop some aspects of the theory of hyperholomorphic functions whose values are taken in a Banach algebra over a field -- assumed to be the real or the complex numbers -- and which contains the field. Notably, we consider Fueter expansions, Gleason's problem, the theory of hyperholomorphic rational functions, modules of Fueter series, and related problems. Such a framework includes many familiar algebras as particular cases. The quaternions, the split quaternions, the Clifford algebras, the ternary algebra, and the Grassmann algebra are a few examples of them.
\end{abstract}

\maketitle

\noindent AMS Classification: 30G35, 46C20

\noindent {\em Keywords}: Banach algebras, Fueter variables, Gleason's problem, hyperholomorphic functions
\date{today}
\tableofcontents
\section{Introduction}
\setcounter{equation}{0}
In \cite{MR1509533}, Fueter studied quaternionic-valued functions and introduced non-commuting hypercomplex variables which allow power series expansion of hypercomplex functions, meaning functions that belong to the kernel of the operator $D$ defined in \eqref{dirac} in the quaternionic setting. Such variables are now known as Fueter variables. In the present work, we extend their definition to functions in the kernel of $D$ and whose values are taken in a Banach algebra $\mathcal{A}$ over a field $\mathbb{K}$ and which contains $\mathbb{K}$. We assume $\mathbb{K}$ to be either $\mathbb{R}$ or $\mathbb{C}$.\smallskip

After defining the Fueter variables in Section \ref{sec-gen-prin}, we start studying the notion of derivative of $\mathcal{A}$-valued functions, which is followed by Fueter expansions and Gleason's problem, the theory of rational functions, and spaces of Fueter series, which include the Drury-Arveson space and the Fock space. Such problems have already been considered in some particular cases -- for instance, when $\mathcal{A}$ is the quaternionic setting, the Clifford algebra, the setting of split quaternions, or the real ternary algebra \cite{MR2275397, alss_IJM, MR2124899, alpay2018gleason}.\smallskip

The central object of our study are functions
\begin{equation}
f:\mathcal{A} \rightarrow \mathcal{A}.
\label{function}
\end{equation}
To set the notation, we let $\mathfrak{a}_1,\ldots, \mathfrak{a}_n\in\mathcal{A}\setminus\mathbb{K}$ be the generators of $\mathcal{A}$ different from the identity element of $\mathbb{K}$, i.e., $\mathfrak{a}_0=1$. The number of generators $n$ is assumed to be finite, but it could, in fact, be taken to be infinity for all computations presented here. The only aspect that would require a deeper understanding, in this case, is the Cauchy-Fueter operator, defined in \eqref{dirac}, which would become a differential operator of infinitely many variables.\smallskip

In some algebras, all ``directions'' are given by the elements $\mathfrak{a}_k$, $k=0,1,\ldots,n$. In those algebras, there exist coefficients $c_{jkl}\in\mathbb{K}$, $j,k,l\in\{0,1,\hdots,n\}$ such that
\begin{equation}
\label{relation}
\mathfrak{a}_j \mathfrak{a}_k = \sum_{l=0}^n c_{jkl} \mathfrak{a}_l
\end{equation}
for every $j,k\in\{1,\ldots,n\}$. An example of such algebras is the quaternionic setting. Denoting the complex units of the quaternions by $i$, $j$ and $k$, the product of two elements, say $ij$ is $k$.\smallskip

In other algebras, on the other hand, the product $\mathfrak{a}_j\mathfrak{a}_k$ might result in a new ``direction''. An example of it is the real ternary algebra, which is generated by the number $1$ and an element $e$ which is not real (nor complex). However, $e^2$ gives a new direction in such an algebra. The quaternions themselves can also be seen as an example of such algebras when the elements $1$, $i$, and $j$ are considered their generators. In this case, $ij$ results in a new direction of the algebra. This shows the quaternions explicitly as an example of a Clifford algebra.\smallskip

Then, in order to include in our study arbitrary algebras $\mathcal{A}$ where \eqref{relation} need not hold, we introduce a set of $t$-uples $\mathfrak{I}$ which are associated with all linearly independent directions of the algebra. If for a certain $j$ and $k$ the product $\mathfrak{a}_j\mathfrak{a}_k$ gives a new direction, then $(j,k)$ might be an element of $\mathfrak{I}$ and $\mathfrak{a}_{(j,k)}\equiv \mathfrak{a}_j\mathfrak{a}_k$. Note that, in some algebras, there might be multiple ways to build $\mathfrak{I}$. For instance, if $\mathfrak{a}_j\mathfrak{a}_k=-\mathfrak{a}_k\mathfrak{a}_j$, as is the case of the Clifford algebra, and the Grassmann algebra, either $(j,k)$ or $(k,j)$ should be part of $\mathfrak{I}$, not both, since they are not associated with linearly independent directions. In this case, if $(j,k)\in\mathfrak{I}$, $\mathfrak{a}_k\mathfrak{a}_j=-\mathfrak{a}_{(j,k)}$. Moreover, the direction $e_0=1$ is not included in $\mathfrak{I}$. \smallskip

Instead of using the set $\mathfrak{I}$ directly, we refer to a map from it into $\mathbb{Z}_m=\{1,2,\cdots,m\}$, where $m$ is the cardinality of $\mathfrak{I}$. So instead of considering indexes that take value in $\mathfrak{I}$, they are taken in $\mathbb{Z}_m$. Moreover, to avoid confusion, we denote the ``independent directions'' of $\mathcal{A}$ by $e_k$ instead of $\mathfrak{a}_k$. For instance, if we say the quaternions are generated by $\mathfrak{a}_0=1$, $\mathfrak{a}_1=i$, and $\mathfrak{a}_2=j$, then in our new notation: $e_0=\mathfrak{a}_0$, $e_1=\mathfrak{a}_1$, $e_2 = \mathfrak{a}_2$, and $e_3 = \mathfrak{a}_1 \mathfrak{a}_2$. \smallskip

With the above discussion, the algebra $\mathcal{A}$ can be identified with the space $\mathbb{K}^{m+1}$ in the following way
\begin{equation}
\mathbb{K}^{m+1} \simeq \mathcal{A} = \left\{a=\sum_{k=0}^m a_k e_k\suchthat a_0,a_1,\hdots,a_m\in\mathbb{K}\right\}.
\label{isomorp}
\end{equation}
In particular, expression \eqref{relation} can be rewritten in this general context as
\begin{equation}
e_j e_k = \sum_{\ell=0}^m (\chi_\ell)_{jk} e_\ell,
\label{prod-rule}
\end{equation}
where the matrices $\chi_\ell$ belong to $\mathbb{K}^{(m+1)\times(m+1)}$ for every $\ell\in\{0,1,\hdots,m\}$. We call the matrices $\chi_\ell$ the {\it characteristic operators of the algebra $\mathcal{A}$} since they encode all properties of the algebra product. One can also see each $\chi_\ell$ as a metric tensor-like object associated with the algebra $\ell$-th direction of $\mathcal{A}$. In fact, observe that for any $a,b\in\mathcal{A}$, if we write
\[
ab = \sum_{\ell=0}^m c_\ell e_\ell,
\]
equation \eqref{prod-rule} leads to
\begin{equation}
c_\ell = \sum_{j,k=1}^m a_j (\chi_\ell)_{jk} b_k \equiv a\chi_\ell b
\label{chi-def}
\end{equation}
for every $\ell\in\{0,1,\hdots,m\}$. However, note that the operators $\chi_\ell$ are not represented by symmetric matrices in every algebra $\mathcal{A}$. Also, note that, in the above expression, the identification between elements of $\mathcal{A}$ with points in $\mathbb{K}^{m+1}$ -- expressed by \eqref{isomorp} -- is being used. \smallskip

Also, we endow the algebra $\mathcal{A}$ with an involution, denoted by $\dag$, with the following properties:
\begin{itemize}
\item $\forall a,b\in\mathcal{A}, \ (ab)^\dag=b^\dag a^\dag$;
\item $\forall k \in\mathbb{K}\subset\mathcal{A}, \ kk^\dag = k^\dag k = |k|^2$.
\end{itemize}
We note that, in general, $aa^\dag$ is not a real (nor a complex) number and, then, it is not always the case that $aa^\dag=a^\dag a$ holds.\smallskip

Because $\mathcal A$ is a Banach algebra, we also assume it has a norm $N$ for which
\begin{equation}
\label{Nab}
N(ab)\le N(a)N(b),\quad \forall a,b\in\mathcal{A}.
\end{equation}
Such a norm may or may not be induced by the involution $\dag$. However, we assume $\ N(a)=N(a^\dag)$ for every $a\in\mathcal{A}$.\smallskip

Note that because $\mathbb{K}\subset\mathcal{A}$, $N$ is also a norm in $\mathbb{K}$. Therefore, without loss of generality, we can assume $N(1)=1$. Then, we can write $N(k)=|k|$, where $|k|$ denotes the usual norm of $k$ in $\mathbb{C}$. Moreover, observe that
\begin{equation}
N(ka) = |k| N(a)
\label{basic-norm-prop}
\end{equation}
for every $a\in\mathcal{A}$.

Introduced some definitions in the algebra $\mathcal{A}$, we look back at the functions $f$ of the type \eqref{function} and observe that, because of \eqref{isomorp}, they can be identified with functions
\begin{equation}
f: \mathbb{K}^{m+1}\rightarrow\mathcal{A}.
\label{function2}
\end{equation}
In our study, we consider in particular the subset of such functions which are $\mathbb{K}$-analytic -- i.e., real analytic functions if $\mathbb{K}=\mathbb{R}$ or complex holomorphic functions if $\mathbb{K}=\mathbb{C}$.\smallskip

Our first goal is to study the analyticity of such functions. In the case of functions $f$ of a complex variable $z$, we say $f$ is holomorphic at a certain point $a\in\mathbb{C}$ if the following notion of a derivative holds:
\[
\frac{df}{dz}(a)=\lim_{z\rightarrow a}\frac{f(z)-f(a)}{z-a}\in\mathbb{C}.
\]
This is the case if and only if the Cauchy-Riemann equations are satisfied at $a$.\smallskip

However, a straightforward extension of the definition of holomorphicity to functions that take values in other settings, i.e., the definition in terms of what is generally called a Fr\'{e}chet derivative does not always lead to interesting analytical structures.\smallskip

The classic example of where such an extension ``fails'' is the quaternionic setting. In fact, if $f$ was a function of a quaternionic variable $q$ and the condition for $f$ to be hyperholomorphic at a certain point $a\in\mathbb{H}$ was
\[
\lim_{q\rightarrow a}\left(f(q)-f(a)\right)(q-a)^{-1}\in\mathbb{H}
\]
or
\[
\lim_{q\rightarrow a}(q-a)^{-1}\left(f(q)-f(a)\right)\in\mathbb{H},
\]
then one would conclude that $f$ is a linear function of $q$. Because of it, one considers different definitions to build analysis tools upon, e.g., slice hyperholomorphicity. \smallskip

Nonetheless, Malonek showed in \cite{malonek1990new} that, for real Clifford algebras with $n$ non-real generators, the idea of Fr\'{e}chet derivatives could be used as a ``good'' definition of hyperholomorphicity. However, instead of seeing the function as a function of a single variable in the algebra or, equivalently, a function of $n+1$ real variables, the Fr\'{e}chet derivative was taken by considering the function as a function of $n$ Fueter variables. With this result, Malonek studied the so-called Fueter series in \cite{malonek1990power}, i.e., power series of Fueter variables.\smallskip

We now describe the content of the paper. In Section \ref{sec-hyper}, we generalize Malonek's results on hyperholomorphicity to the algebras $\mathcal{A}$ we already described. In Section \ref{sec-poly}, we define Fueter polynomials, which form the building blocks for Fueter power series expansions. Then, in Section \ref{sec-series}, we study the convergence of such series centered at a point where the function is hyperholomorphic. We also introduce in this section the Cauchy product at the center of the power series, i.e., the convolution of coefficients of power series. Such a definition is necessary since, due to the lack of commutativity, the pointwise product of two hyperholomorphic functions is not necessarily hyperholomorphic. We emphasize that, because the Cauchy product is defined at the center of the power series, it is center dependent. After introducing the convolution, we discuss Gleason's problem. In Section \ref{sec-rat}, we define and characterize hyperholomorphic rational functions, which are important tools in analysis. Moreover, we introduce reproducing kernel Banach spaces and study multipliers in those spaces in Section \ref{sec-banach}. As an example of such spaces, in Section \ref{sec-arveson}, we present the counterpart of the Drury-Arveson space in our setting and, in Section \ref{sec-blaschke}, study Blaschke factors in such a space. As another example, we also introduce the analogous of the Fock space in Section \ref{sec-fock}.\smallskip

Before that, we start by defining and presenting other motivations for the Fueter variables in Section \ref{sec-gen-prin}.

\section{A general principle}
\setcounter{equation}{0}
\label{sec-gen-prin}

Consider $\mathcal{A}$-valued functions $f$ of $m+1$ $\mathbb{K}$-valued variables $v_k$, $k\in\{0,1,\cdots,m\}$, which are of class $C^1$ in an open subset $\Omega$ of $\mathbb K^{m+1}$, and
satisfying
\begin{equation}
Df=0.
\label{d-kernel}
\end{equation}
In the above expression, $D$ denotes the Cauchy-Fueter or Dirac operator, which is defined as
\begin{equation}
D=D_0+\sum_{k=1}^m e_k D_k,
\label{dirac}
\end{equation}
where
\[
D_j=\frac{\partial}{\partial v_j}, \qquad 
\]
for every $j\in\{0,1,\cdots,m\}$. Such functions $f$ are called left $D$-hyperholomorphic, left $\mathcal{A}$-analytic \cite[Definition 3.1 p. 119]{MR2129645}, or left monogenic \cite{malonek1990new}. The word left is used because one can also consider functions $f$ that belongs to the kernel of $D$ when it acts on $f$ from the right. Here, because we focus on functions that satisfy \eqref{d-kernel}, we omit the word left and just call them $D$-hyperholomorphic, $\mathcal{A}$-analytic, or monogenic. The results we obtain can be easily adapted for right $D$-hyperholomorphic functions.\smallskip

Then, denoting $v=(v_0,v_1,\cdots,v_m)\in\mathbb{K}^{m+1}$, we have
\[
\begin{split}
\frac{{\rm d}f}{\rm dt}(tv)
&=\sum_{k=0}^m v_k \ \frac{\partial f}{\partial v_k}(tv)\\
&=\sum_{k=1}^m v_k \ \frac{\partial f}{\partial v_k}(tv)-v_0\left(\sum_{k=1}^m e_\alpha \ \frac{\partial f}{\partial v_k}(tv)\right)\\
&=\sum_{k=1}^m(v_k-e_k v_0) \ \frac{\partial f}{\partial v_k}(tv)
\end{split}
\]
and so
\[
f(v)-f(0)=\int_0^1\frac{\rm d}{\rm dt}f(tv)dt=\sum_{k=1}^m(v_k-e_k v_0)\int_0^1\frac{\partial f}{\partial v_k}(tv)dt.
\]

\begin{definition}
We call the functions
\begin{equation}
\zeta_k(v)=v_k -e_k v_0,\quad k\in\mathbb{Z}_m,
\end{equation}
the Fueter variables associated with $D$. They are also called total regular variables by Delanghe \cite{delanghe1972singularities}.
\end{definition}

\begin{remark}
{\rm In the Clifford algebra setting, one usually considers functions $f$ of $n+1$ variables in $\mathbb{K}$ that belong to the kernel of the Cauchy-Fueter operator
\[
D = D_0 + \sum_{k=1}^n D_k e_k.
\]
We notice that this represents particular cases of the framework presented here since those functions $f$ are also in the kernel of the Cauchy-Fueter operator $D$ given by expression \eqref{dirac}.}
\end{remark}

More generally, for a fixed $w\in\Omega$,
\[
\begin{split}
\frac{{\rm d}f}{\rm dt}(tv+(1-t)w)
&=\sum_{k=0}^m (v_k-w_k) \ \frac{\partial f}{\partial v_k}(tv+(1-t)w)\\
&=\sum_{k=1}^m(\zeta_k(v)-\zeta_k(w)) \ \frac{\partial f}{\partial v_k}(tv+(1-t)w)
\end{split}
\]
holds true for every $v\in\Omega$. Therefore,
\begin{equation}
\label{gleason!!!}
f(v)-f(w)=\int_0^1\frac{\rm d}{\rm dt}f(tv)dt=\sum_{k=1}^m(\zeta_k(v)-\zeta_k(w))\int_0^1\frac{\partial f}{\partial v_k}(tv+(1-t)w)dt.
\end{equation}

Following Malonek's approach for the Clifford algebra in \cite{malonek1990new}, consider now $\zeta=(\zeta_1,\zeta_2,\cdots,\zeta_m)$ and let $\mathscr{H}^m$ be the set of all such vectors. Then, there is a one-to-one correspondence between $\mathbb{K}^{m+1}$ and $\mathscr{H}^m$:
\[
\mathbb{K}^{m+1} \simeq \mathscr{H}^m = \left\{\zeta=(\zeta_1,\cdots,\zeta_m)\suchthat \zeta_k=v_k-e_k v_0; v_0,v_k\in\mathbb{K}\right\}.
\]
Because of this correspondence between those spaces, we use the notations $f(v)$ and $f(\zeta)$ indistinguishably. Moreover, we often write $\xi=\zeta(w)\in\mathscr{H}^m$.

\begin{remark}
\label{not-submodule}
\label{remark-non-hyper}
{\rm Although it is clear that $\mathscr{H}^m\subset\mathcal{A}^m$, observe that $\mathscr{H}^m$ is not an $\mathcal{A}$-submodule. In fact, $c\zeta$ belongs to $\mathscr{H}^m$ for an arbitrary $\zeta\in\mathscr{H}^m$ if and only if $c\in\mathbb{K}$.\smallskip

As a consequence, we have that the product of two $D$-hyperholomorphic functions need not be $D$-hyperholomorphic. For instance, if $f(v)=\zeta_j \zeta_k$ where $j,k\in\mathbb{Z}_m$, we have
\[
\zeta_j\zeta_k= v_j v_k-e_j v_0v_k-e_k v_0v_j +e_j e_k v_0^2
\]
and, then,
\begin{equation}
D(\zeta_j\zeta_k)=\left[e_j,e_k\right] v_0^2,
\label{non-hyperhol}
\end{equation}
where $\left[e_j,e_k\right]\equiv e_j e_k - e_k e_j$ is the commutator of $e_j$ and $e_k$. Expression \eqref{non-hyperhol} does not vanish in general since $e_j$ and $e_k$ might not commute. However, it follows from \eqref{non-hyperhol} that $D(\zeta_j\zeta_k+\zeta_k\zeta_j)=0$. More generally, as we will see in the next section, symmetrized products of the Fueter variables (and, in particular, powers of a single  variable) are $D$-hyperholomorphic. Examples of algebras where expression \eqref{non-hyperhol} is different from zero include the quaternions, the split quaternions and the Grassmann algebra  -- the latter will be studied in more details in a future work.}
\end{remark}

We will denote by
\begin{equation}
(\R_k(w)f)(v) \equiv \int_0^1 \frac{\partial f}{\partial v_k}(tv+(1-t)w) dt,
\label{bs-general}
\end{equation}
where $w\in\Omega$ is a fixed element, the backward-shift operator centered at $w$. Then, equation \eqref{gleason!!!} becomes
\begin{equation}
f(v)-f(w)=\sum_{k=1}^m (\zeta_k(v)-\zeta_k(w))(\R_k(w) f)(v).
\label{general-sol}
\end{equation}
Moreover, we note that
\[
(\R_k(w)f)(w) = \frac{\partial f}{\partial v_k}(w).
\]

\begin{remark}
{\rm Sometimes, we denote the backward-shift operator $(\R_k(w)f)(v)$ defined in \eqref{bs-general} by $(\R_k(\xi)f)(\zeta)$.}
\end{remark}

If a restriction to $C^2(\Omega)$ functions is made, then $\R_k(w) f(v)$ is $D$-hyperholomorphic. Moreover, if $f$ belongs to $C^p(\Omega)$ for some $p\in\mathbb{N}$, the process given by \eqref{non-hyperhol} can be iterated $p$ times, generating the so-called {\it Fueter polynomials}, which is the object of study of Section \ref{sec-poly}. In particular, $C^\infty(\Omega)$ functions give origin to {\it Fueter series}. Even then, we note that each of the individual pointwise products $(\zeta_k(v)-\zeta_k(w))(\R_k(w) f)(v)$ in \eqref{general-sol} do not need to be $D$-hyperholomorphic. Their sum, however, which is equal to $f(v)-f(w)$, is $D$-hyperholomorphic.

\section{Hyperholomorphicity of functions from $\mathbb{K}^{m+1}$ into $\mathcal{A}$}
\setcounter{equation}{0}
\label{sec-hyper}

In this section, we show how Malonek's work in \cite{malonek1990new} generalizes from the Clifford algebra to a more general scenario. The calculations follow the arguments in that paper. \smallskip

We start by endowing the algebra $\mathcal{A}^m$ with the Hermitian form
\begin{equation}
\label{form}
\left[\zeta,\xi\right]_\mathcal{A}=\sum_{k=1}^m \xi_k^\dagger \zeta_k,
\end{equation}
which in general is $\mathcal{A}$-valued. \smallskip

Defining
\[
\mathfrak{h}_k = \left(0,\cdots,0,1,0\cdots,0\right)\in\mathscr{H}^m \quad k\in\mathbb{Z}_m
\]
and
\[
\mathfrak{h}_0 = -\left(e_1,\cdots,e_m\right)\in\mathscr{H}^m,
\]
it follows that an arbitrary element $\zeta\in\mathscr{H}^m$ can be written as
\begin{equation}
\zeta = \sum_{k=0}^mv_k\mathfrak{h}_k.
\label{decomposition}
\end{equation}
Moreover,
\begin{equation}
\left[\zeta,\mathfrak{h}_k\right]_\mathcal{A}=\zeta_k
\label{component}
\end{equation}
for every $k\in\mathbb{Z}_m$ and
\[
\left[\zeta,\mathfrak{h}_0\right]_\mathcal{A} = -v_0\sum_{k=1}^m e_k^2 - \sum_{k=1}^m v_k e_k.
\]

\begin{remark}
{\rm Note that if $u\in\mathcal{A}^m$, then there exist coefficients $u_k\in\mathcal{A}$, $k\in\mathbb{Z}_m$ such that
\[
u = \sum_{k=1}^m u_k \mathfrak{h}_k.
\]
In this sense, the set $\{\mathfrak{h}_k\}_{k\in\mathbb{Z}_m}$ is a canonical basis for $\mathcal{A}^m$.}
\end{remark}

Because of the above remark, instead of using the induced norm
\[
N\left(\left[\zeta,\zeta\right]_\mathcal{A}\right)^{1/2} = N\left(\sum_{k=1}^m\zeta_k^\dag\zeta_k\right)^{1/2}
\]
in $\mathcal{A}^m$, we define
\begin{equation}
\left\Vert\zeta\right\Vert_{\mathcal{A}^m} = \left(\sum_{k=1}^m N(\left[\zeta,\mathfrak{h}_k\right]_\mathcal{A})^2\right)^{1/2}.
\label{norm-am}
\end{equation}

\begin{remark}
{\rm Expression \eqref{decomposition} can only be written because of the embedding of $\mathscr{H}^m$ in $\mathcal{A}^m$. As a matter of fact, observe that even though \eqref{decomposition} holds, $\zeta_k\mathfrak{h}_k\not\in\mathscr{H}^m$ in general. This is a different way to see the discussion in Remark \ref{remark-non-hyper}.}
\end{remark}

Denote by $\mathscr{L}(\mathcal{A}^m,\mathcal{A})$ the set of all $\mathcal{A}$-linear operators from $\mathcal{A}^m$ into $\mathcal{A}$. We recall that $L\in\mathscr{L}$ is said to be $\mathcal{A}$-linear from the left if
\[
L(au+bv)=aL(u)+bL(v)
\]
for every $a,b\in\mathcal{A}$ and $u,v\in\mathcal{A}^m$. Because \eqref{decomposition} holds, it follows that $L\in\mathscr{L}(\mathscr{H}^m,\mathcal{A})$ is $\mathcal{A}$-linear from the left if
\[
L(a\zeta+b\xi)=aL(\zeta)+bL(\xi)
\]
and, moreover, there exist constants $A_k\in\mathcal{A}$, $k\in\mathbb{Z}_m$, such that $L$ is uniquely characterized by
\begin{equation}
L(\zeta)= \zeta_1 A_1 + \cdots + \zeta_m A_m.
\label{linear-map}
\end{equation}

\begin{remark}
{\rm A similar construction can be made for operators that are $\mathcal{A}$-linear from the right.}
\end{remark}

Next, we present the definition of a differentiable function from an open subset of $\mathscr{H}^m$ into $\mathcal{A}$.

\begin{definition}
Let $f$ be a continuous map from an open set $\Omega\in\mathscr{H}^m$ into $\mathcal{A}$. Then, $f$ is said to be left hyperholomorphic if it has a Fr\'{e}chet derivative, i.e., if there exists an $\mathcal{A}$-linear from the left linear map $L\in\mathscr{L}(\mathscr{H}^m,\mathcal{A})$ such that
\begin{equation}
\lim_{\Delta\zeta\rightarrow0} \frac{N\left(f(\xi+\Delta\zeta)-f(\xi)-L(\Delta\zeta)\right)}{\left\Vert\Delta\zeta\right\Vert_{\mathcal{A}^m}} = 0
\label{der}
\end{equation}
for every $\xi\in\Omega$.
\end{definition}

\begin{proposition}
If $f$ is left hyperholomorphic, then the linear map $L$ in \eqref{der} is unique.
\end{proposition}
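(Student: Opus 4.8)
The plan is to run the classical uniqueness-of-the-Fr\'echet-derivative argument, adapted to the module structure introduced above. Suppose $L_1,L_2\in\mathscr{L}(\mathscr{H}^m,\mathcal{A})$ both satisfy \eqref{der} at a fixed $\xi\in\Omega$, and set $M=L_1-L_2$. Since $L_1$ and $L_2$ are $\mathcal{A}$-linear from the left, so is $M$; hence, by \eqref{linear-map}, there exist constants $A_1,\ldots,A_m\in\mathcal{A}$ with $M(\zeta)=\zeta_1A_1+\cdots+\zeta_mA_m$. The goal is to show $A_k=0$ for every $k\in\mathbb{Z}_m$, which immediately gives $M\equiv 0$, i.e.\ $L_1=L_2$.

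First, I would use the triangle inequality for $N$ together with \eqref{der}: for $\Delta\zeta\neq 0$ with $\xi+\Delta\zeta\in\Omega$,
\[
\frac{N\bigl(M(\Delta\zeta)\bigr)}{\left\Vert\Delta\zeta\right\Vert_{\mathcal{A}^m}}
\le
\frac{N\bigl(f(\xi+\Delta\zeta)-f(\xi)-L_1(\Delta\zeta)\bigr)}{\left\Vert\Delta\zeta\right\Vert_{\mathcal{A}^m}}
+\frac{N\bigl(f(\xi+\Delta\zeta)-f(\xi)-L_2(\Delta\zeta)\bigr)}{\left\Vert\Delta\zeta\right\Vert_{\mathcal{A}^m}},
\]
so the left-hand side tends to $0$ as $\Delta\zeta\to 0$ in $\mathscr{H}^m$.

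Next, for each fixed $k$, I would restrict this limit to the directions $\Delta\zeta=t\,\mathfrak{h}_k$ with $t\in\mathbb{K}\setminus\{0\}$ and $t\to 0$. This is legitimate because Remark \ref{not-submodule} guarantees that $\mathbb{K}$-multiples of elements of $\mathscr{H}^m$ remain in $\mathscr{H}^m$, so $t\,\mathfrak{h}_k\in\mathscr{H}^m$, and $\xi+t\,\mathfrak{h}_k\in\Omega$ for $|t|$ small since $\Omega$ is open. The components of $\mathfrak{h}_k$ are $\delta_{jk}$, so \eqref{component} gives $\left[\,t\,\mathfrak{h}_k,\mathfrak{h}_j\,\right]_\mathcal{A}=t\,\delta_{jk}$, whence \eqref{norm-am} yields $\left\Vert t\,\mathfrak{h}_k\right\Vert_{\mathcal{A}^m}=N(t)=|t|$; likewise $M(t\,\mathfrak{h}_k)=t\,A_k$, so $N\bigl(M(t\,\mathfrak{h}_k)\bigr)=|t|\,N(A_k)$ by \eqref{basic-norm-prop}. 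Therefore $N(A_k)=N\bigl(M(t\,\mathfrak{h}_k)\bigr)/\left\Vert t\,\mathfrak{h}_k\right\Vert_{\mathcal{A}^m}$ for all small $t\neq 0$, and the right-hand side tends to $0$ as $t\to 0$; hence $N(A_k)=0$, and since $N$ is a norm on $\mathcal{A}$ this forces $A_k=0$. As $k$ was arbitrary, $M\equiv 0$ and $L_1=L_2$.

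I do not expect a genuine obstacle here; the only points that require care are bookkeeping ones — verifying that the probing directions $t\,\mathfrak{h}_k$ actually lie in $\mathscr{H}^m$ (which is exactly where the failure of $\mathscr{H}^m$ to be an $\mathcal{A}$-submodule, noted in Remark \ref{not-submodule}, plays a role, since we may only scale by elements of $\mathbb{K}$), and checking that the norm \eqref{norm-am} evaluates on these directions cleanly enough that the difference quotient collapses to exactly $N(A_k)$. Everything else is the standard Fr\'echet-derivative uniqueness argument.
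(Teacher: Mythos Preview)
Your argument is correct and is precisely the standard Fr\'echet-derivative uniqueness argument; the paper's proof is just a one-line sketch that writes down the expansion $f(\xi+\Delta\zeta)-f(\xi)=\Delta\zeta_1A_1+\cdots+\Delta\zeta_mA_m+o(\|\Delta\zeta\|_{\mathcal{A}^m})$ and implicitly leaves to the reader exactly the probing-in-each-$\mathfrak{h}_k$-direction computation you carried out. Your version is more complete but follows the same underlying idea.
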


\begin{proof}
Equations \eqref{linear-map} and \eqref{der} imply that
\[
f(\xi+\Delta\zeta)-f(\xi) = \Delta\zeta_1 A_1 + \cdots + \Delta\zeta_m A_m + o(\left\Vert\Delta\zeta\right\Vert_{\mathcal{A}^m}),
\]
where
\[
\lim_{\Delta\zeta\rightarrow0} \frac{o(\left\Vert\Delta\zeta\right\Vert_{\mathcal{A}^m})}{\left\Vert\Delta\zeta\right\Vert_{\mathcal{A}^m}}=0.
\]
\end{proof}

\begin{theorem}
Let $f$ be a $\mathbb{K}$-analytic function. Then, $f$ is left hyperholomorphic if and only if it is (left) $D$-hyperholomorphic.
\end{theorem}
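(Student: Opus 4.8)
The plan is to establish the equivalence by working with the coordinate expression of a left $\mathcal{A}$-linear map and matching it against the partial derivatives of $f$. First I would assume $f$ is $\mathbb{K}$-analytic on an open set $\Omega \subset \mathscr{H}^m$, so that in the variables $v=(v_0,\ldots,v_m)$ it admits a genuine Fréchet derivative as a function of $m+1$ $\mathbb{K}$-variables; concretely, $f(v+\Delta v) - f(v) = \sum_{k=0}^m \Delta v_k \, \partial f/\partial v_k(v) + o(|\Delta v|)$. The key algebraic input is the decomposition \eqref{decomposition}: an increment $\Delta\zeta \in \mathscr{H}^m$ is $\Delta\zeta = \sum_{k=0}^m \Delta v_k \mathfrak{h}_k$ with $\Delta v_0 = -[\Delta\zeta,\mathfrak{h}_0]$-type data absorbed via $\zeta_k = \Delta v_k - e_k \Delta v_0$ for $k \in \mathbb{Z}_m$. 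Using $\Delta v_0$ as a free parameter and $\Delta v_k = \Delta\zeta_k + e_k \Delta v_0$, I would substitute into the analytic expansion and collect terms.

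The heart of the argument is then: $f$ is left hyperholomorphic iff the resulting expansion can be written in the form $\sum_{k=1}^m \Delta\zeta_k A_k + o(\|\Delta\zeta\|_{\mathcal{A}^m})$ for some constants $A_k \in \mathcal{A}$, i.e. iff the coefficient of the ``extra'' parameter $\Delta v_0$ vanishes. Plugging $\Delta v_k = \Delta\zeta_k + e_k \Delta v_0$ into $\sum_{k=0}^m \Delta v_k\, \partial f/\partial v_k$ gives $\sum_{k=1}^m \Delta\zeta_k\, \partial f/\partial v_k + \Delta v_0\big(\partial f/\partial v_0 + \sum_{k=1}^m e_k\, \partial f/\partial v_k\big)$. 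The second group is precisely $\Delta v_0 \cdot (Df)$ with $D$ the Cauchy–Fueter operator of \eqref{dirac}. Since $\Delta v_0$ can be chosen independently of $\Delta\zeta$ (while keeping $\|\Delta\zeta\|_{\mathcal{A}^m}$ controlled, because that norm only sees the $\zeta_k$ components via \eqref{component}), the Fréchet limit \eqref{der} forces $Df = 0$; conversely if $Df=0$ the expansion already has the required left-linear form with $A_k = \partial f/\partial v_k$, and the $\mathbb{K}$-analytic remainder is $o(|\Delta v|) = o(\|\Delta\zeta\|_{\mathcal{A}^m})$ after noting the two norms are comparable on the increment once $\Delta v_0$ is expressed through the $\zeta$-data. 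One should also check the easy direction that left hyperholomorphicity of a merely continuous $f$ already yields the partial derivatives and hence, a posteriori, that $A_k = \partial f/\partial v_k$.

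The main obstacle I anticipate is the norm bookkeeping: $\|\cdot\|_{\mathcal{A}^m}$ as defined in \eqref{norm-am} measures only the $m$ components $\zeta_k$, whereas the honest variable is $(v_0,\ldots,v_m) \in \mathbb{K}^{m+1}$, so an increment $\Delta\zeta$ with all $\Delta\zeta_k$ small need not have $\Delta v_0$ small — indeed $\Delta v_0$ is a free direction. I would handle this by observing that the limit in \eqref{der} is taken over $\Delta\zeta \in \mathscr{H}^m$, and along the family $\Delta\zeta$ fixed, $\Delta v_0 = t \to 0$ one reads off the vanishing of $Df$; while for the converse, given $Df = 0$, the identity is exact in the linear part and the remainder control is immediate from $\mathbb{K}$-analyticity together with the elementary bound $|\Delta v_0| \le C\,\|\Delta\zeta\|_{\mathcal{A}^m}$ valid because specifying $\zeta = (\zeta_1,\ldots,\zeta_m)$ determines $v$ uniquely via the one-to-one correspondence $\mathbb{K}^{m+1} \simeq \mathscr{H}^m$. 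This is exactly Malonek's computation in \cite{malonek1990new}, transcribed into the present notation.
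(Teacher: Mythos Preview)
Your approach is essentially the paper's: expand via $\mathbb{K}$-analyticity, substitute $\Delta v_k = \Delta\zeta_k + e_k\,\Delta v_0$, and recognize the obstruction term as $\Delta v_0\cdot Df$. The paper writes this as $\Delta f = \Delta\zeta_0\,Df + [\Delta\zeta,\nabla_v]f + o(\|\Delta\zeta\|_{\mathcal{A}^m})$ with $\zeta_0:=v_0$, then compares with the canonical form \eqref{linear-map} of a left $\mathcal{A}$-linear map.

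There is, however, an internal inconsistency you should repair. You first argue that ``$\Delta v_0$ can be chosen independently of $\Delta\zeta$'' and later (correctly) that ``specifying $\zeta$ determines $v$ uniquely via $\mathbb{K}^{m+1}\simeq\mathscr{H}^m$.'' These cannot both hold; the second is the true one. Because $1$ and $e_k$ are $\mathbb{K}$-linearly independent in $\mathcal{A}$, the map $(\Delta v_0,\ldots,\Delta v_m)\mapsto (\Delta\zeta_1,\ldots,\Delta\zeta_m)$ is a $\mathbb{K}$-linear isomorphism of finite-dimensional spaces, so $|\Delta v|\asymp \|\Delta\zeta\|_{\mathcal{A}^m}$ and in particular $|\Delta v_0|\le C\|\Delta\zeta\|_{\mathcal{A}^m}$. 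This is what justifies replacing $o(|\Delta v|)$ by $o(\|\Delta\zeta\|_{\mathcal{A}^m})$ in \emph{both} directions, not only the converse. For the forward direction, do not try to ``fix $\Delta\zeta$ and let $\Delta v_0\to 0$'' (impossible); instead take the specific increments $\Delta v_j=0$ for $j\ge 1$, so $\Delta\zeta_k=-e_k\Delta v_0$ and $\|\Delta\zeta\|_{\mathcal{A}^m}\asymp|\Delta v_0|$, and read off $Df=0$ from $\Delta v_0\,Df = L(\Delta\zeta)+o(|\Delta v_0|)$ combined with $A_k=D_kf$ (obtained from the increments $\Delta v_0=0$). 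With that correction your argument matches the paper's.
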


\begin{proof}
The fact that $f$ is $\mathbb{K}$-analytic leads to
\begin{equation}
f(\zeta+\Delta\zeta)-f(\zeta) = \Delta f(\zeta) =  \Delta v_0 \frac{\partial f}{\partial v_0} + \Delta v_1 \frac{\partial f}{\partial v_1} + \cdots + \Delta v_m \frac{\partial f}{\partial v_m} + o(|\Delta v|)
\label{k-hol}
\end{equation}
with
\[
\lim_{\Delta v\rightarrow0} \frac{o(|\Delta v|)}{|\Delta v|}=0.
\]
Note that the order of the products in the right-hand side of \eqref{k-hol} does not matter here. Now, consider the variable substitution $v_0=\zeta_0$ and
\[
v_k = \zeta_0 e_k + \zeta_k \quad k\in\mathbb{Z}_m.
\]
Then, equation \eqref{k-hol} implies that
\begin{eqnarray*}
\Delta f(\zeta) & = & \Delta\zeta_0 \left(D_0f + \sum_{k=1}^m e_k D_kf\right) + \sum_{k=1}^m \Delta\zeta_k D_kf + o(\left\Vert\Delta\zeta\right\Vert_{\mathcal{A}^m}) \\
                & = & \Delta\zeta_0 Df + \left[\Delta\zeta,\nabla_v\right]f + o(\left\Vert\Delta\zeta\right\Vert_{\mathcal{A}^m}),
\end{eqnarray*}
where $\nabla_v\equiv\sum_{k=1}^m D_k \mathfrak{h}_k$. Comparing the above expression with \eqref{linear-map}, we conclude that $f$ is hyperholomorphic if and only if $Df=0$, i.e., $f$ is $D$-hyperholomorphic.
\end{proof}

\begin{corollary}
The differential of $f$ is given by
\[
df = d\zeta_0 Df + \left[d\zeta,\nabla_v\right]f.
\]
\end{corollary}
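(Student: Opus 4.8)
The plan is to read off the differential directly from the computation already performed in the proof of the preceding theorem. Recall that, after the variable substitution $v_0=\zeta_0$ and $v_k=\zeta_0 e_k+\zeta_k$ for $k\in\mathbb{Z}_m$, the $\mathbb{K}$-analyticity of $f$ was shown to yield
\[
\Delta f(\zeta) = \Delta\zeta_0\, Df + \left[\Delta\zeta,\nabla_v\right]f + o(\left\Vert\Delta\zeta\right\Vert_{\mathcal{A}^m}).
\]
Since the differential $df$ is by definition the linear part (in the increments $\Delta\zeta_0,\Delta\zeta_1,\dots,\Delta\zeta_m$) of $\Delta f(\zeta)$, one simply replaces each increment $\Delta\zeta_j$ by the corresponding one-form $d\zeta_j$ and discards the $o(\left\Vert\Delta\zeta\right\Vert_{\mathcal{A}^m})$ remainder, which gives
\[
df = d\zeta_0\, Df + \left[d\zeta,\nabla_v\right]f
\]
exactly as claimed. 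No new estimate is needed; the corollary is a transcription of the identity established one theorem earlier.

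First I would state that, by $\mathbb{K}$-analyticity, $f$ is in particular $\mathbb{K}$-differentiable, so its total differential in the original coordinates is $df=\sum_{k=0}^m dv_k\,\frac{\partial f}{\partial v_k}$, where the order of the (scalar one-form)$\cdot$(algebra-valued derivative) products is immaterial because $dv_k\in\mathbb{K}$ is central. Next I would apply the chain rule to the substitution $v_0=\zeta_0$, $v_k=\zeta_0 e_k+\zeta_k$, which gives $dv_0=d\zeta_0$ and $dv_k=d\zeta_0\, e_k+d\zeta_k$ — again with $d\zeta_j\in\mathbb{K}$ central, so $d\zeta_0 e_k = e_k d\zeta_0$. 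Substituting and collecting the coefficient of $d\zeta_0$ produces $\frac{\partial f}{\partial v_0}+\sum_{k=1}^m e_k\frac{\partial f}{\partial v_k}=Df$, while the remaining terms assemble into $\sum_{k=1}^m d\zeta_k\,\frac{\partial f}{\partial v_k}=\left[d\zeta,\nabla_v\right]f$ in the notation $\nabla_v=\sum_{k=1}^m D_k\mathfrak{h}_k$ and the Hermitian form $\left[\cdot,\cdot\right]_\mathcal{A}$ introduced above. This is precisely the asserted formula.

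There is essentially no obstacle here; the only point requiring the slightest care is bookkeeping of left-versus-right placement of the factors $e_k$ and of the derivatives $\frac{\partial f}{\partial v_k}$, since $\mathcal{A}$ is noncommutative. The safe convention, consistent with the proof of the theorem and with the definition of $D$ in \eqref{dirac}, is to keep the algebra elements $e_k$ on the left and the (algebra-valued) partial derivatives on the right, matching the left-linear normal form \eqref{linear-map}; once that convention is fixed, the computation is a one-line chain-rule substitution. I would therefore present the corollary's proof as simply ``This is immediate from the identity for $\Delta f(\zeta)$ established in the proof of the previous theorem, upon passing from increments to differentials,'' perhaps with the two-line chain-rule recomputation above included for the reader's convenience.
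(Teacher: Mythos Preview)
Your proposal is correct and matches the paper's treatment exactly: the paper states this corollary with no proof at all, evidently regarding it as an immediate transcription of the identity $\Delta f(\zeta)=\Delta\zeta_0\,Df+[\Delta\zeta,\nabla_v]f+o(\|\Delta\zeta\|_{\mathcal{A}^m})$ obtained in the proof of the preceding theorem. Your chain-rule recomputation is a faithful expansion of that implicit argument, and your remark about the left/right placement of the $e_k$ is the only point that genuinely needs care.
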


\begin{corollary}
If $f$ is $D$-hyperholomorphic from the left and from the right, then
\[
df = \sum_{k=1}^m \frac{\partial f}{\partial v_k} \times d\zeta_k,
\]
where $\times$ denotes the symmetrized product, which is defined next -- see equation \eqref{symmetrized-product}.
\end{corollary}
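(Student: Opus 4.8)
The plan is to derive the identity by averaging a ``left'' and a ``right'' version of the differential formula of the preceding corollary. Start from the ordinary differential of $f$ in the $v$-coordinates, $df=\sum_{k=0}^m\frac{\partial f}{\partial v_k}\,dv_k$; since each $dv_k$ has coefficients in $\mathbb{K}$ it commutes with every element of $\mathcal A$, so we are free to write it on either side of $\frac{\partial f}{\partial v_k}$. Substituting $dv_0=d\zeta_0$ and $dv_k=d\zeta_k+e_k\,d\zeta_0$ for $k\in\mathbb{Z}_m$, and keeping each $dv_k$ to the \emph{left} of $\frac{\partial f}{\partial v_k}$, one recovers exactly the preceding corollary,
\[
df=d\zeta_0\,Df+\sum_{k=1}^m d\zeta_k\,\frac{\partial f}{\partial v_k},
\]
the $[d\zeta,\nabla_v]f$ term being equal to $\sum_{k=1}^m d\zeta_k\,\partial f/\partial v_k$ by the computation in the proof of the theorem. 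Left $D$-hyperholomorphicity means $Df=0$, hence $df=\sum_{k=1}^m d\zeta_k\,\frac{\partial f}{\partial v_k}$.

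Next I would record the mirror computation: performing the same substitution but keeping each $dv_k$ to the \emph{right} of $\frac{\partial f}{\partial v_k}$ gives
\[
df=\Big(D_0f+\sum_{k=1}^m \frac{\partial f}{\partial v_k}\,e_k\Big)d\zeta_0+\sum_{k=1}^m\frac{\partial f}{\partial v_k}\,d\zeta_k ,
\]
whose $d\zeta_0$-coefficient $D_0f+\sum_{k=1}^m(D_kf)e_k$ is precisely the right action of $D$ on $f$. (Equivalently, this formula is the output of the Fréchet-derivative argument of the theorem run with the factors placed on the opposite side, using the ``$\mathcal A$-linear from the right'' representation noted after \eqref{linear-map}.) Since $f$ is also right $D$-hyperholomorphic this coefficient vanishes, leaving $df=\sum_{k=1}^m\frac{\partial f}{\partial v_k}\,d\zeta_k$.

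Finally, add the two expressions for $df$ and divide by $2$:
\[
df=\sum_{k=1}^m\tfrac12\Big(d\zeta_k\,\tfrac{\partial f}{\partial v_k}+\tfrac{\partial f}{\partial v_k}\,d\zeta_k\Big)=\sum_{k=1}^m\frac{\partial f}{\partial v_k}\times d\zeta_k ,
\]
the last step being the definition \eqref{symmetrized-product} of the symmetrized product in the two-factor case. I do not expect a real obstacle: the only points needing care are the bookkeeping of left versus right placement of the scalar forms $dv_k$ — in particular verifying that the $d\zeta_0$-coefficients come out to be exactly $Df$ and $fD$ — together with the observation that, because $\mathscr{H}^m$ is not an $\mathcal A$-submodule (Remark~\ref{remark-non-hyper}), the left and right differentials genuinely differ, so that it is the conjunction of both hypotheses, not either one alone, that produces the symmetrized form; the $o(\Vert\Delta\zeta\Vert_{\mathcal A^m})$ estimates and the norm equivalences are handled exactly as in the proof of the theorem.
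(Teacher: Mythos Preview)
The paper states this corollary without proof, so there is no explicit argument to compare against. Your proposal is correct and is precisely the argument the paper's setup invites: take the preceding corollary $df=d\zeta_0\,Df+[d\zeta,\nabla_v]f$, use left $D$-hyperholomorphicity to kill the $d\zeta_0$ term, repeat the computation on the right side (as the paper notes just after \eqref{linear-map} can be done) to obtain $df=\sum_k(\partial f/\partial v_k)\,d\zeta_k$ from right $D$-hyperholomorphicity, and average.
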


\section{Fueter polynomials}
\setcounter{equation}{0}
\label{sec-poly}

Generalizing classical cases, the Fueter variables generate polynomials which are hyperholomophic. They are called Fueter polynomials.\smallskip

We recall that in a non-commutative algebra $\mathcal{A}$, the {\it symmetrized product} of $a_1,\ldots, a_N\in \mathcal{A}$ is defined by
\begin{equation}
a_1\times a_2\times\cdots\times a_N=\frac{1}{N!}\sum_{\sigma\in S_N}a_{\sigma(1)}a_{\sigma(2)}\cdots a_{\sigma(N)},
\label{symmetrized-product}
\end{equation}
where the sum is over the set $S_N$ of all permutations on $N$ indexes. It is worth mentioning that such a product is used in quantum mechanics for fermionic systems. Also, observe that, in general, the product $\times$ is non-associative, i.e.,
\[
(a\times b)\times c\neq a\times(b\times c).
\]
Moreover, if $\mathcal{A}$ is commutative, the symmetrized product reduces to the regular product. Furthermore, as pointed out by Malonek in \cite{malonek1990power}, for every $k\in\mathbb{N}$
\begin{equation}
(a_1+\cdots+a_N)^k = \sum_{\substack{\alpha\in\mathbb{N}_0^N;\\ |\alpha|=k}} \frac{k!}{\alpha!} a^\alpha,
\end{equation}
where $a^\alpha=a_1^{\alpha_1}\times\cdots\times a_N^{\alpha_N}$.

\begin{proposition}
Let $\alpha=(\alpha_1,\cdots, \alpha_m)\in\mathbb N^m$. The symmetrized product of the terms $\zeta_k^{\alpha_k}$, $k\in\mathbb{Z}_m$, simply denoted by
$\zeta^{\alpha}$, is hyperholomorphic.
\end{proposition}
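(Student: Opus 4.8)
The plan is to sidestep any direct manipulation of the symmetrized product and instead reduce the claim, by a polarization argument, to a single computation: that $D$ annihilates every ordinary power of a \emph{scalar} linear combination of the Fueter variables. Once that is in hand, the multinomial identity recalled just above the statement together with the theorem of Section~\ref{sec-hyper} (for $\mathbb{K}$-analytic functions, hyperholomorphy is equivalent to $D$-hyperholomorphy) will finish the proof with no further analysis.

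The one genuine computation is the following. Fix $\lambda=(\lambda_1,\dots,\lambda_m)\in\mathbb{K}^m$ and set $\Lambda=\sum_{k=1}^m\lambda_k\zeta_k$. Writing $s=\sum_{k=1}^m\lambda_k v_k\in\mathbb{K}$ and $c=\sum_{k=1}^m\lambda_k e_k\in\mathcal{A}$, one has $\Lambda=s-v_0 c$ with $s,v_0\in\mathbb{K}$ central, so the usual binomial theorem gives $\Lambda^p=\sum_{i=0}^p\binom{p}{i}(-1)^i s^{p-i}v_0^i c^i$ for every $p\in\mathbb{N}_0$. I would then differentiate this closed form term by term: $s$ does not involve $v_0$, $D_j s=\lambda_j$ for $j\in\mathbb{Z}_m$, and the key point is that $\sum_{j=1}^m e_j D_j$ simply reinstates the factor $c=\sum_j\lambda_j e_j$ to the left of $c^i$, so that only powers of the single element $c$ occur and no ordering ambiguity arises. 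Collecting terms and re-indexing, the coefficient of $s^{p-i-1}v_0^i c^{i+1}$ in $D\Lambda^p$ turns out to be proportional to $\binom{p}{i}(p-i)-\binom{p}{i+1}(i+1)$, which vanishes identically; hence $D\Lambda^p=0$ for every $p$ and every $\lambda$.

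Now I polarize. Applying the identity $(a_1+\cdots+a_m)^k=\sum_{|\alpha|=k}\frac{k!}{\alpha!}a^\alpha$ with $a_k=\lambda_k\zeta_k$, and noting that each $\lambda_k\in\mathbb{K}$ is central and therefore factors out of every permuted product and hence out of every symmetrized product, one gets $a^\alpha=\lambda^\alpha\zeta^\alpha$ and so $\Lambda^k=\sum_{|\alpha|=k}\frac{k!}{\alpha!}\lambda^\alpha\zeta^\alpha$. Applying $D$ and using the previous paragraph,
\[
\sum_{|\alpha|=k}\frac{k!}{\alpha!}\,\lambda^\alpha\,(D\zeta^\alpha)(v)=0
\]
for every $v$ and every $\lambda\in\mathbb{K}^m$. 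For fixed $v$ the left side is an $\mathcal{A}$-valued polynomial in $\lambda$ vanishing identically on $\mathbb{K}^m$; decomposing $D\zeta^\alpha$ along the basis $e_0,\dots,e_m$ and using that $\mathbb{K}$ is infinite (so the distinct monomials $\lambda^\alpha$, $|\alpha|=k$, are $\mathbb{K}$-linearly independent), each coefficient $(D\zeta^\alpha)(v)$ must be zero. Thus $D\zeta^\alpha=0$, and since $\zeta^\alpha$ is a polynomial, hence $\mathbb{K}$-analytic, the theorem of Section~\ref{sec-hyper} yields that $\zeta^\alpha$ is (left) hyperholomorphic.

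The difficulty is entirely concentrated in the computation $D\Lambda^p=0$: that is the only place where the non-commutativity of $\mathcal{A}$ has to be faced, and what makes it painless is the splitting $\Lambda=s\cdot 1-v_0 c$ into central scalars times the single algebra element $c$, so that all products in sight are powers of $c$ and commute among themselves. An alternative would be an induction on $p$ together with a Leibniz-type rule for $D$ acting on symmetrized products, but that is genuinely messier; the polarization route keeps the combinatorics of the symmetrized product purely formal and isolates the analytic content in one clean identity of binomial coefficients.
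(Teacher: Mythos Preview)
Your proof is correct and takes a genuinely different route from the paper's.

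The paper proves $D\zeta^\alpha=0$ by a direct attack on the symmetrized product: it writes $\zeta^\alpha$ as a sum over all permutations, applies $D$ term by term, multiplies through by $v_0$ to convert the stray $e_u$'s back into $\zeta_u$'s, and observes that the resulting two sums differ only by where $\zeta_u$ is inserted in each permuted word---so summing over all permutations makes them cancel. It is a telescoping/symmetry argument carried out inside the non-commutative product.

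Your argument instead localizes all the non-commutativity in powers of a single element $c=\sum_k\lambda_k e_k$, where no ordering issue can arise, and then recovers the general $\zeta^\alpha$ by polarization via the multinomial identity quoted in the paper. The verification of $D\Lambda^p=0$ is a clean binomial-coefficient identity, and the extraction of each $D\zeta^\alpha$ from the vanishing polynomial in $\lambda$ is standard. The trade-off: the paper's proof exhibits the combinatorial mechanism of cancellation explicitly and needs nothing beyond the definition of the symmetrized product, whereas your proof is shorter and more conceptual but imports two outside ingredients (the multinomial expansion and linear independence of monomials over an infinite field). Both are perfectly valid; your version is arguably more elegant, and your final appeal to the theorem of Section~\ref{sec-hyper} to pass from $D$-hyperholomorphic to hyperholomorphic is a nice touch the paper leaves implicit.
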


\begin{proof}
For this proof, we use the same method that appears in \cite{MR0265618,MR1509533}, which was also used in \cite{alss_IJM} in the setting of split quaternions. We start rewriting $\zeta^{\alpha}$ as
\[
\zeta^{\alpha}=\sum_{\sigma\in S_{|\alpha|}}\sum_{u=1}^m\sum_{\substack{k\in\mathbb{Z}_{|\alpha|}; \\ \sigma(k)=u}}
\zeta_{\sigma(1)}\cdots\zeta_{\sigma(k-1)}\zeta_u\zeta_{\sigma(k+1)}\cdots \zeta_{\sigma(|\alpha|)},
\]
where $|\alpha|$ denotes the sum of the components of $\alpha$ and $\mathbb{Z}_{|\alpha|}=\{1,2,\hdots,|\alpha|\}$. Therefore,
\[
\begin{split}
D(\zeta^{\alpha})&=\sum_{\sigma\in S_{|\alpha|}}\sum_{u=1}^m\sum_{\substack{k\in\mathbb{Z}_{|\alpha|}; \\ \sigma(k)=u}}
e_u\zeta_{\sigma(1)}\cdots\zeta_{\sigma(k-1)}\zeta_{\sigma(k+1)}\cdots \zeta_{\sigma(|\alpha|)}-\\
&\hspace{5mm}-\sum_{\sigma\in S_{|\alpha|}} \sum_{u=1}^m \sum_{\substack{k\in\mathbb{Z}_{|\alpha|}; \\ \sigma(k)=u}}
\zeta_{\sigma(1)}\cdots\zeta_{\sigma(k-1)}e_u\zeta_{\sigma(k+1)}\cdots \zeta_{\sigma(|\alpha|)}.
\end{split}
\]

Hence,
\[
\begin{split}
v_0D(\zeta^{\alpha})&=\sum_{\sigma\in S_{|\alpha|}}\sum_{u=1}^m\sum_{\substack{k\in\mathbb{Z}_{|\alpha|}; \\ \sigma(k)=u}}
v_0e_u\zeta_{\sigma(1)}\cdots\zeta_{\sigma(k-1)}\zeta_{\sigma(k+1)}\cdots \zeta_{\sigma(|\alpha|)}-\\
&\hspace{5mm}-\sum_{\sigma\in S_{|\alpha|}}\sum_{u=1}^m\sum_{\substack{k\in\mathbb{Z}_{|\alpha|}; \\ \sigma(k)=u}}
\zeta_{\sigma(1)}\cdots\zeta_{\sigma(k-1)}e_uv_0\zeta_{\sigma(k+1)}\cdots \zeta_{\sigma(|\alpha|)}\\
&=\sum_{\sigma\in S_{|\alpha|}}\sum_{u=1}^m\sum_{\substack{k\in\mathbb{Z}_{|\alpha|}; \\ \sigma(k)=u}}
(v_0e_u-v_u+v_u)\zeta_{\sigma(1)}\cdots\zeta_{\sigma(k-1)}\zeta_{\sigma(k+1)}\cdots \zeta_{\sigma(|\alpha|)}-\\
&\hspace{5mm}-\sum_{\sigma\in S_{|\alpha|}}\sum_{u=1}^m\sum_{\substack{k\in\mathbb{Z}_{|\alpha|}; \\ \sigma(k)=u}}
\zeta_{\sigma(1)}\cdots\zeta_{\sigma(k-1)}(e_uv_0-v_u+v_u)\zeta_{\sigma(k+1)}\cdots \zeta_{\sigma(|\alpha|)}\\
&=-\sum_{\sigma\in S_{|\alpha|}}\sum_{u=1}^m\sum_{\substack{k\in\mathbb{Z}_{|\alpha|}; \\ \sigma(k)=u}}
\zeta_u\zeta_{\sigma(1)}\cdots\zeta_{\sigma(k-1)}\zeta_{\sigma(k+1)}\cdots \zeta_{\sigma(|\alpha|)}-\\
&\hspace{5mm}+\sum_{\sigma\in S_{|\alpha|}}\sum_{u=1}^m\sum_{\substack{k\in\mathbb{Z}_{|\alpha|}; \\ \sigma(k)=u}}
\zeta_{\sigma(1)}\cdots\zeta_{\sigma(k-1)}\zeta_u\zeta_{\sigma(k+1)}\cdots \zeta_{\sigma(|\alpha|)}\\
&=0
\end{split}
\]
since the sum is made on all the permutations and
\[
\begin{split}
\sum_{\sigma\in S_{|\alpha|}}\sum_{u=1}^m\sum_{\substack{k\in\mathbb{Z}_{|\alpha|}; \\ \sigma(k)=u}}
v_u\zeta_{\sigma(1)}\cdots\zeta_{\sigma(k-1)}\zeta_{\sigma(k+1)}\cdots \zeta_{\sigma(|\alpha|)}=\\
&\hspace{-50mm}=\sum_{\sigma\in S_{|\alpha|}}\sum_{u=1}^m\sum_{\substack{k\in\mathbb{Z}_{|\alpha|}; \\ \sigma(k)=u}}
\zeta_{\sigma(1)}\cdots\zeta_{\sigma(k-1)}v_u\zeta_{\sigma(k+1)}\cdots \zeta_{\sigma(|\alpha|)}.
\end{split}
\]
\end{proof}

With the above proposition, our next result on Fueter polynomials follows trivially.

\begin{corollary}
Any Fueter polynomial is hyperholomorphic. In particular, $(\zeta-\xi)^\alpha$ is hyperholomorphic for every $\alpha\in\mathbb{N}_0^m$ and constant $\xi\in\mathscr{H}^m$.
\end{corollary}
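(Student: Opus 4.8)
The plan is to reduce the corollary to the preceding proposition by showing that every Fueter polynomial is, up to $\mathbb{K}$-linear combination, a symmetrized product $\zeta^\alpha$ of the kind already handled, and then to handle the shifted version $(\zeta-\xi)^\alpha$ by the same device applied to the translated Fueter variables. First I would recall that a Fueter polynomial, as generated by the iterative scheme described after \eqref{general-sol}, is by construction a $\mathbb{K}$-linear combination of symmetrized products $\zeta_{k_1}\times\cdots\times\zeta_{k_N}$ with $k_1,\ldots,k_N\in\mathbb{Z}_m$; grouping equal indices, each such symmetrized product equals $\zeta^\alpha$ for the multi-index $\alpha$ recording how many times each $k$ occurs (with $|\alpha|=N$). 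Since the operator $D$ in \eqref{dirac} acts $\mathbb{K}$-linearly on $\mathcal{A}$-valued functions — the coefficients $e_k$ and the derivatives $D_j$ are $\mathbb{K}$-linear — and since the previous proposition gives $D(\zeta^\alpha)=0$ for each $\alpha\in\mathbb{N}^m$ (and trivially $D(\zeta^0)=D(1)=0$), linearity of $D$ yields $D=0$ applied to any such $\mathbb{K}$-linear combination. Hence every Fueter polynomial is hyperholomorphic.

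For the second assertion, fix a constant $\xi\in\mathscr{H}^m$, say $\xi=\zeta(w)$ for some $w\in\mathbb{K}^{m+1}$, and set $\eta_k(v):=\zeta_k(v)-\xi_k$. I would observe that $\eta_k(v)=(v_k-w_k)-e_k(v_0-w_0)$ is again of the Fueter form in the shifted real coordinates $u_j:=v_j-w_j$; equivalently, the change of variables $v\mapsto v-w$ is an affine $\mathbb{K}$-isomorphism of $\mathbb{K}^{m+1}$ that commutes with each $D_j$ and hence with $D$. Therefore $D\bigl((\zeta-\xi)^\alpha\bigr)$ computed in the $v$-variables equals $D\bigl(\zeta^\alpha\bigr)$ computed in the $u$-variables, which vanishes by the previous proposition. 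Equivalently one can simply re-run the computation in the proof of that proposition verbatim with $\zeta_k$ replaced by $\eta_k$, using that $D_0\eta_k=-e_k$, $D_k\eta_k=1$, $D_j\eta_k=0$ for $j\neq 0,k$ — exactly the same derivative relations that made the telescoping in the proposition work — so no new cancellation is needed.

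The only point requiring a little care, and thus the main (minor) obstacle, is the bookkeeping that a Fueter polynomial produced by iterating \eqref{general-sol} really is a $\mathbb{K}$-linear combination of symmetrized monomials $\zeta^\alpha$ rather than of arbitrary ordered monomials $\zeta_{k_1}\cdots\zeta_{k_N}$; this is where non-commutativity could in principle bite, since $\zeta_j\zeta_k$ is not hyperholomorphic while $\zeta_j\times\zeta_k$ is (cf.\ Remark \ref{remark-non-hyper}). I would address this by noting that the symmetrization is built into the construction of the Fueter polynomials precisely so that the defining recursion lands in the symmetric part; alternatively, since the statement only asks for hyperholomorphicity of $(\zeta-\xi)^\alpha$ — which by definition of $\zeta^\alpha$ in the proposition already means the symmetrized product of the $\zeta_k^{\alpha_k}$ — the corollary is an immediate consequence of the proposition together with the translation-invariance of $D$, and the first sentence about ``any Fueter polynomial'' follows once one records that Fueter polynomials are by definition $\mathbb{K}$-spans of such $\zeta^\alpha$. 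So in the write-up I would state: by $\mathbb{K}$-linearity of $D$ and the proposition, any $\mathbb{K}$-linear combination of the $\zeta^\alpha$ is hyperholomorphic; and since $\eta_k=\zeta_k-\xi_k$ satisfies the same derivative identities $D_0\eta_k=-e_k$, $D_k\eta_k=1$, the proposition applies verbatim to give $D((\zeta-\xi)^\alpha)=0$.
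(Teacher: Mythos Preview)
Your proposal is correct and is precisely the argument the paper intends: the paper gives no formal proof of this corollary, stating only that it ``follows trivially'' from the preceding proposition. Your two ingredients --- linearity of $D$ for the first assertion, and the observation that the shifted variables $\eta_k=\zeta_k-\xi_k$ satisfy the same derivative identities $D_0\eta_k=-e_k$, $D_j\eta_k=\delta_{jk}$ as the $\zeta_k$ themselves (equivalently, translation-invariance of $D$) for the second --- are exactly what makes it trivial.
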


With the discussion presented in this section so far, it is clear that symmetrized products of Fueter variables are fundamental for the construction of a hyperholomorphic Fueter polynomial. Now, we note that such a product also appear naturally in an expansion of $\mathbb{K}$-analytic functions by repeatedly iterating the backward-shift operator. As an example, observe that applying \eqref{general-sol} to $(\R_k(\xi)f)(\zeta)$ gives
\[
\begin{split}
f(\zeta) &= f(\xi) + \sum_{k=1}^m (\zeta_k-\xi_k) \frac{\partial f}{\partial v_k}(\xi) \\
&\hspace{5mm} + \sum_{k=1}^m \sum_{1\le j \le k} \left[(\zeta_k-\xi_k)(\zeta_j-\xi_j)+(\zeta_j-\xi_j)(\zeta_k-\xi_k)\right] (\R_k(\xi)\R_j(\xi)f)(\zeta),
\end{split}
\]
where $f$ is assumed to be of class $C^2(\Omega)$. Note that $\R_k(\xi)\R_j(\xi)=\R_j(\xi)\R_k(\xi)$ for every $j,k\in\mathbb{Z}_m$.\smallskip

More generally, let $\R(\xi)^\alpha \equiv \R_1(\xi)^{\alpha_1}\R_2(\xi)^{\alpha_2}\cdots \R_m(\xi)^{\alpha_m}$. Then,
\begin{eqnarray*}
(\R^\alpha(\xi)f)(\xi) & = & \frac{\partial^{|\alpha|} f}{\partial v^\alpha}(\xi)\int_0^1 \int_0^1 \cdots\int_0^1 t_1^{|\alpha|-1} t_2^{|\alpha|-2} \cdots t_{|\alpha|-1} \ dt_1 \ dt_2 \cdots dt_{|\alpha|}  \\
             & = & \frac{1}{\alpha!} \frac{\partial^2 f}{\partial v_k v_j}(\xi),
\end{eqnarray*}
where $\partial v^\alpha = \partial v_1^{\alpha_1} \partial v_2^{\alpha_2}\cdots \partial v_m^{\alpha_m}$. As a consequence, we can write for every function $f$ of class $C^p(\Omega)$
\begin{equation}
f(\zeta) = f(\xi) + \sum_{\substack{\alpha\in\mathbb{N}_0^m; \\ |\alpha|< p}} (\zeta-\xi)^\alpha \frac{\partial^{|\alpha|} f}{\partial v^\alpha}(\xi) + \sum_{\substack{\alpha\in\mathbb{N}_0^m; \\ |\alpha|=p}} (\zeta-\xi)^\alpha (\R(\xi)^\alpha f)(\zeta).
\label{iteration}
\end{equation}

\section{Fueter series and Gleason's problem}
\setcounter{equation}{0}
\label{sec-series}

At this stage, we restrict our study to hyperholomorphic functions, which were defined in Section \ref{sec-hyper}, i.e., functions which are $\mathbb K$-analytic and belong to the kernel of the Cauchy-Fueter operator. We note that in some settings every hyperholomorphic function is automatically $\mathbb{K}$-analytic -- this is true, for instance, for elliptic systems.\smallskip

Our concern now is with the result of successive iterations of $\R(\xi)$ to a function $f$, as started in \eqref{iteration}. Such a process leads, at least formally, to the Fueter series
\begin{equation}
f(\zeta)=\sum_{\alpha\in\mathbb N_0^{m}}(\zeta-\xi)^{\alpha} f_\alpha(\xi).
\label{fueter123}
\end{equation}
Our question is whether such a series converges in an open neighborhood of $\xi$. \smallskip

In order to set the notation for the theorem that answers such a question, recall that if a function $f$ that takes value in $\mathbb{K}^{m+1}$ is $\mathbb{K}$-analytic in a neighborhood $\Omega(w)$ of a point $w$, then the power series
\begin{equation}
f(v)=\sum_{\alpha\in\mathbb N_0^{m+1}}(v-w)^\alpha f_\alpha(w),
\label{ps-k}
\end{equation}
where
\begin{equation}
f_\alpha(w)=\frac{1}{\alpha!}\frac{\partial^{|\alpha|}f}{\partial v^\alpha}(w),
\label{ps-coeff}
\end{equation}
converges for every $v\in\Omega(w)$, i.e., there exist $K>0$ and strictly positive numbers $r_1,\cdots, r_m$ such that
\begin{equation}
N(f_\alpha)\le\frac{K}{r_0^{\alpha_0}\cdots r_m^{\alpha_m}}\equiv\frac{K}{r^\alpha}.
\label{bound-falpha}
\end{equation}

Also, let $\sigma_1,\cdots,\sigma_m$ be strictly positive numbers such that $\sigma_k< r_k$, $k\in\mathbb{Z}_m$ and recall that we are denoting by $\xi$ the element in $\mathscr{H}^m$ that corresponds to the point $w\in\mathbb{K}^{m+1}$.\smallskip

Following \cite{malonek1990power}, we define
\[
\mathscr{U}_{\xi}(\sigma) = \left\{\zeta\in\mathscr{H}^m \suchthat N(\zeta_k-\xi_k)\le \sigma_k, k\in\mathbb{Z}_m\right\}\subset\mathscr{H}^m.
\]

A last notation remark before the presentation of the theorem, we note that whenever $\alpha$ takes value in $\mathbb{N}_0^m$, the coefficient $f_\alpha$ refers to $f_{(0,\alpha)}$, where $(0,\alpha)\in\mathbb{N}_0^{m+1}$. With that set, we state a theorem on the convergence of Fueter series.

\begin{theorem}
Let $f$ given by \eqref{ps-k} be $D$-hyperholomorphic in a neighborhood of $w\in\mathbb{K}^{m+1}$. Then, the Fueter series given by \eqref{fueter123} converges absolutely for every $\zeta\in\mathscr{U}_\xi(\sigma)$.
\end{theorem}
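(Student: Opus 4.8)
The plan is to pass from the $\mathbb{K}$-analytic power series \eqref{ps-k} for $f$ to the Fueter series \eqref{fueter123} by substituting $v_0 = \zeta_0$, $v_k = \zeta_0 e_k + \zeta_k$ (the variable change already used in the hyperholomorphicity theorem of Section \ref{sec-hyper}) and then setting $\zeta_0 = 0$, i.e. expressing each monomial $(v-w)^{(0,\alpha)}$ in terms of the Fueter variables. Concretely, on the slice $v_0 = w_0$ one has $v_k - w_k = \zeta_k - \xi_k$, so formally $f_{(0,\alpha)}$ survives as the Fueter coefficient $f_\alpha(\xi)$; but one must check that regrouping the full multi-index sum $\sum_{\beta \in \mathbb{N}_0^{m+1}}(v-w)^\beta f_\beta(w)$ — where $\beta = (\beta_0,\alpha)$ has a nonzero zeroth component — collapses correctly after restriction, using that the symmetrized powers $(\zeta-\xi)^\alpha$ are exactly what appear (Corollary on Fueter polynomials) and that repeated application of $\R(\xi)$ produces the coefficients $\frac{1}{\alpha!}\partial^{|\alpha|}f/\partial v^\alpha$ as recorded in \eqref{iteration}.

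The core of the argument is the convergence estimate. First I would bound the norm of a single Fueter monomial: since $N$ is submultiplicative \eqref{Nab} and $N(ka)=|k|N(a)$ \eqref{basic-norm-prop}, and the symmetrized product $(\zeta-\xi)^\alpha$ is an average over $S_{|\alpha|}$ of ordinary products of the $\zeta_k - \xi_k$, one gets
\begin{equation}
N\big((\zeta-\xi)^\alpha\big) \le \prod_{k=1}^m N(\zeta_k-\xi_k)^{\alpha_k} \le \sigma^\alpha = \sigma_1^{\alpha_1}\cdots\sigma_m^{\alpha_m}
\label{monomial-bound}
\end{equation}
for $\zeta \in \mathscr{U}_\xi(\sigma)$. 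Next, combining \eqref{monomial-bound} with the Cauchy-type bound \eqref{bound-falpha} on the coefficients — with the understanding that $f_\alpha = f_{(0,\alpha)}$ so $N(f_\alpha) \le K/r^\alpha$ where only the indices $r_1,\dots,r_m$ enter — the tail of the Fueter series is dominated termwise by $K\prod_{k=1}^m (\sigma_k/r_k)^{\alpha_k}$. Then
\begin{equation}
\sum_{\alpha \in \mathbb{N}_0^m} N\big((\zeta-\xi)^\alpha f_\alpha(\xi)\big) \le \sum_{\alpha \in \mathbb{N}_0^m} K \prod_{k=1}^m \left(\frac{\sigma_k}{r_k}\right)^{\alpha_k} = K \prod_{k=1}^m \frac{1}{1 - \sigma_k/r_k} < \infty
\label{geometric-sum}
\end{equation}
because $\sigma_k < r_k$ by hypothesis, so the series converges absolutely on all of $\mathscr{U}_\xi(\sigma)$.

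The main obstacle I anticipate is not the geometric estimate but the bookkeeping in the passage from \eqref{ps-k} to \eqref{fueter123}: one has to justify rigorously that restricting the convergent $(m+1)$-variable series to the affine slice parametrized by $(\zeta_1,\dots,\zeta_m)$ and reorganizing it actually yields \eqref{fueter123} with the stated coefficients, rather than some rearrangement whose convergence is not automatic. The clean way to handle this is to invoke absolute convergence of the original power series on a polydisc around $w$ (which permits arbitrary rearrangement) together with the identity \eqref{iteration}: the $C^\infty$ remainder terms $\sum_{|\alpha|=p}(\zeta-\xi)^\alpha(\R(\xi)^\alpha f)(\zeta)$ must be shown to vanish as $p \to \infty$ on $\mathscr{U}_\xi(\sigma)$, which again follows from \eqref{monomial-bound} together with a bound on $\R(\xi)^\alpha f$ coming from the analyticity of $f$ on a slightly larger set. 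Once that remainder is controlled, the partial sums of \eqref{iteration} converge to $f(\zeta)$ and the limit is precisely the Fueter series \eqref{fueter123}, with the absolute convergence already secured by \eqref{geometric-sum}. I would present the estimate \eqref{monomial-bound}–\eqref{geometric-sum} first and then address the slice/remainder identification, since the former is short and the latter is where the care is needed.
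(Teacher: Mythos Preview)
Your core argument—the monomial bound $N((\zeta-\xi)^\alpha)\le\sigma^\alpha$ from submultiplicativity of $N$, followed by comparison with the convergent geometric series $K\sum_\alpha(\sigma/r)^\alpha$—is exactly the paper's proof. Note, however, that the theorem as stated asserts only absolute convergence of the series \eqref{fueter123}, not that it sums to $f$; the slice/remainder identification you flag as the main obstacle is therefore not required, and the paper's proof omits it entirely.
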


\begin{proof}
We start by observing that \eqref{fueter123} and
\[
f(\zeta)=\sum_{k=0}^\infty\sum_{\substack{\alpha\in\mathbb N_0^m;\\ |\alpha|=k}}(\zeta-\xi)^{\alpha} f_\alpha(\xi),
\]
at least formally, are both ``natural'' representations of power series with Fueter variables. On the one hand, however, they generally have different domains of convergence. Such a remark is important, for instance, if $\mathcal{A}$ is the Clifford algebra \cite{brackx1982clifford}. On the other hand, if one considers the domain $\mathscr{U}_\xi(\sigma)$, the two expressions, if they converge, coincide because
\begin{equation}
N((\zeta-\xi)^\alpha)\le\prod_{k=1}^m N(\zeta_k-\xi_k)^{\alpha_k}\le \sigma^\alpha < r^\alpha
\label{quai-de-la-rapee}
\end{equation}
and, as a consequence,
\[
N\left(\sum_{\alpha\in\mathbb N_0^{m}}(\zeta-\xi)^{\alpha} f_\alpha(\xi)\right) \le \sum_{\alpha\in\mathbb N_0^{m}}\sigma^{\alpha} N\left(f_\alpha(\xi)\right) \le K \sum_{\alpha\in\mathbb N_0^{m}}\left(\frac{\sigma}{r}\right)^{\alpha}<\infty,
\]
where we have used expression \eqref{bound-falpha}. Hence, \eqref{fueter123} converges absolutely in the the domain $\mathscr{U}_\xi(\sigma)$.
\end{proof}

Before presenting the next result, we just define elements $\iota_k\in\mathbb{N}_0^m$ whose $j$-th entries are characterized by
\[
\left(\iota_k\right)_j = \delta_{jk}, \quad j,k\in\mathbb{Z}_m.
\]

\begin{proposition}
Let $f$ be given by \eqref{fueter123}. Moreover, let $\R_k(w)$ be given by \eqref{bs-general} and $\zeta(w)=\xi$. Then,
\[
\R_k(w)f(v) = \sum_{\substack{\alpha\in\mathbb{N}_0^m;\\ \alpha\geq \iota_k}} \frac{\alpha_k}{|\alpha|} (\zeta(v)-\xi)^{\alpha-\iota_k} f_\alpha.
\]
\label{rk-ps}
\end{proposition}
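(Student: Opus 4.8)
The plan is to substitute the Fueter series \eqref{fueter123} into the integral \eqref{bs-general} defining $\R_k(w)$ and to differentiate and integrate term by term. Two elementary facts drive the computation. First, each $\zeta_j(v)=v_j-e_jv_0$ is $\mathbb{K}$-linear in $v$, so along the segment $tv+(1-t)w$ one has $\zeta(tv+(1-t)w)-\xi=t\,(\zeta(v)-\xi)$; since the symmetrized product \eqref{symmetrized-product} is multilinear and $t\in\mathbb{K}$ is a scalar, this promotes to
\[
\big(\zeta(tv+(1-t)w)-\xi\big)^{\beta}=t^{|\beta|}\,(\zeta(v)-\xi)^{\beta},\qquad \beta\in\mathbb{N}_0^m .
\]
Second, I claim the differentiation rule $\frac{\partial}{\partial v_k}(\zeta-\xi)^{\alpha}=\alpha_k\,(\zeta-\xi)^{\alpha-\iota_k}$ for $k\in\mathbb{Z}_m$, with the convention that the right-hand side is $0$ when $\alpha_k=0$. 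Granting both, differentiating \eqref{fueter123} term by term gives $\frac{\partial f}{\partial v_k}=\sum_{\alpha\ge\iota_k}\alpha_k\,(\zeta-\xi)^{\alpha-\iota_k}f_\alpha$ (the constant $f_\alpha$ staying on the right); evaluating at $tv+(1-t)w$, pulling out the scalar factor $t^{|\alpha-\iota_k|}=t^{|\alpha|-1}$ via the first fact, and using $\int_0^1 t^{|\alpha|-1}\,dt=1/|\alpha|$ produces exactly $(\R_k(w)f)(v)=\sum_{\alpha\ge\iota_k}\frac{\alpha_k}{|\alpha|}\,(\zeta(v)-\xi)^{\alpha-\iota_k}f_\alpha$, which is the assertion.

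The only step needing real care is the differentiation rule for $(\zeta-\xi)^{\alpha}$, the (modest) obstacle being the non-commutative, non-associative symmetrized product; I would handle it with a scalar generating parameter rather than a direct combinatorial expansion. Introduce $t=(t_1,\dots,t_m)\in\mathbb{K}^m$ and apply the multinomial-type expansion recalled just after \eqref{symmetrized-product} to the elements $t_j(\zeta_j-\xi_j)$, using multilinearity of $\times$ to pull the scalars out:
\[
\Big({\textstyle\sum_{j=1}^m t_j(\zeta_j-\xi_j)}\Big)^{|\alpha|}=\sum_{\substack{\gamma\in\mathbb{N}_0^m;\ |\gamma|=|\alpha|}}\frac{|\alpha|!}{\gamma!}\,t^{\gamma}\,(\zeta-\xi)^{\gamma}.
\]
Since $\partial(\zeta_j-\xi_j)/\partial v_k=\delta_{jk}$, the derivative of $\sum_j t_j(\zeta_j-\xi_j)$ in $v_k$ is $t_k$, which lies in $\mathbb{K}$ and is therefore central in $\mathcal{A}$; hence differentiating the left-hand side in $v_k$ yields $|\alpha|\,t_k\big(\sum_j t_j(\zeta_j-\xi_j)\big)^{|\alpha|-1}=\sum_{|\gamma|=|\alpha|-1}\frac{|\alpha|!}{\gamma!}\,t^{\gamma+\iota_k}(\zeta-\xi)^{\gamma}$. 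Matching the coefficient of $t^{\alpha}$ on the two sides (legitimate because $\mathbb{K}$ is infinite) gives $\frac{|\alpha|!}{\alpha!}\,\frac{\partial}{\partial v_k}(\zeta-\xi)^{\alpha}=\frac{|\alpha|!}{(\alpha-\iota_k)!}(\zeta-\xi)^{\alpha-\iota_k}$ when $\alpha_k\ge1$, and $0$ when $\alpha_k=0$; since $\alpha!/(\alpha-\iota_k)!=\alpha_k$, the rule follows.

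It remains to justify the two term-by-term operations. Since $f$ is $\mathbb{K}$-analytic near $w$, its power series --- equivalently the Fueter series \eqref{fueter123} on the polydisc $\mathscr{U}_\xi(\sigma)$, by the preceding theorem --- converges absolutely and locally uniformly, and on any $\mathscr{U}_\xi(\sigma')$ with $\sigma'_k<\sigma_k$ the differentiated series admits the convergent majorant $\sum_{\alpha\ge\iota_k}\alpha_k\,(\sigma')^{\alpha-\iota_k}\,N(f_\alpha)$, finite by \eqref{bound-falpha} since a single polynomial factor does not spoil geometric convergence. The same majorant, multiplied by $t^{|\alpha|-1}\le1$, dominates the integrand uniformly for $t\in[0,1]$ (note $tv+(1-t)w$ remains in such a polydisc for $v$ near $w$, using $\zeta(tv+(1-t)w)-\xi=t(\zeta(v)-\xi)$), so $\sum$ and $\int_0^1$ may be interchanged. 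Combining this with the two facts of the first paragraph completes the proof.
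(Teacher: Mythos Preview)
Your proof is correct and follows essentially the same approach as the paper: compute $\R_k(\xi)$ on each monomial $(\zeta-\xi)^\alpha$ via the derivative rule $\partial_{v_k}(\zeta-\xi)^\alpha=\alpha_k(\zeta-\xi)^{\alpha-\iota_k}$, the affine identity $\zeta(tv+(1-t)w)-\xi=t(\zeta(v)-\xi)$, and $\int_0^1 t^{|\alpha|-1}\,dt=1/|\alpha|$. The paper simply asserts the derivative rule and omits the convergence justification, whereas you supply both --- your generating-function argument for the rule is a pleasant extra, but it is filling in a detail rather than charting a different route.
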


\begin{proof}
The proof is a generalization of the one presented for the quaternions in \cite{MR2124899}. First, note that
\[
\frac{\partial(\zeta-\xi)^{\alpha}}{\partial v_k}(v) = \alpha_k (\zeta(v)-\xi)^{\alpha-\iota_k}.
\]
Therefore,
\begin{eqnarray*}
\R_k(\xi)(\zeta-\xi)^\alpha & = & \int_0^1 \frac{\partial(\zeta-\xi)^{\alpha}}{\partial v_k}(tv+(1-t)w) dt \\
             & = & \int_0^1 \alpha_k (\zeta(tv+(1-t)w)-\xi)^{\alpha-\iota_k} dt \\
             & = & \int_0^1 \alpha_k t^{|\alpha|-1} (\zeta-\xi)^{\alpha-\iota_k}(v) dt \\
             & = & \frac{\alpha_k}{|\alpha|} (\zeta-\xi)^{\alpha-\iota_k}
\end{eqnarray*}
since $\zeta(tv+(1-t)w)=t\zeta(v)+(1-t)\xi$.

The above proves the proposition and justifies the name backward-shift operator given to $\R_k(\xi)$.
\end{proof}

\begin{definition}
We define for $\alpha,\beta\in\mathbb N_0^m$ and $u,v$ in the algebra $\mathcal{A}$, the Cauchy (or convolution) product at $\xi\in\mathscr{H}^m$ by
\begin{equation}
(\zeta-\xi)^{\alpha}u\odot_{\scriptscriptstyle \xi} (\zeta-\xi)^{\beta}v=(\zeta-\xi)^{\alpha+\beta}uv
\end{equation}
\end{definition}

\begin{remark}
{\rm In the quaternionic setting, the Cauchy product at the origin can be defined using the Cauchy-Kowaleskaya theorem (see \cite{MR618518}), and is then called Cauchy-Kowalweskay product. It can also be applied to functions that are not necessarily hyperholomorphic at the origin. The principle of the Cauchy-Kowaleskaya product is the following. The real components of a quaternionic valued function $f$ such that $D_{CF}f=0$ satisfy a set of linear partial differential equations to which the Cauchy-Kovalesvkaya theorem is applicable. The solution to this system is uniquely determined by the initial condition $f(0,x_1,x_2,x_3)$, and the Cauchy-Kovaleskaya product of $f$ and $g$ is defined by the {\sl pointwise} product of the initial conditions $f(0,x_1,x_2,x_3)g(0,x_1,x_2,x_3)$. In the case of Fueter series, the Cauchy-Kovaleskaya is, in fact, not needed.}
\label{remarkbastille}
\end{remark}

Now, we extend the product $\odot_{\scriptscriptstyle \xi}$ and the $\odot_{\scriptscriptstyle \xi}$-inverse for power series. Remembering that $\xi=\zeta(w)$, let
\[
f(\zeta)=\sum_{\alpha\in\mathbb N^m}(\zeta-\xi)^\alpha f_\alpha \quad \text{and} \quad g(\zeta)=\sum_{\alpha\in\mathbb{N}_0^m}(\zeta-\xi)^\alpha g_\alpha.
\]
Setting $v_0=w_0$, we obtain power series in $v_k-w_k$, $k\in\mathbb{Z}_m$, with coefficients in the algebra $\mathcal{A}$, i.e.,
\[
f(w_0,v_1,\ldots, v_m)=\sum_{\alpha\in\mathbb N_0^m}(v-w)^\alpha f_\alpha \quad \text{and} \quad g(w_0,v_1,\ldots, v_m)=\sum_{\alpha\in\mathbb N_0^m}(v-w)^\alpha g_\alpha.
\]
Observe that now the coefficients $f_\alpha$ (and $g_\alpha$) commute with the variables. We consider, then, the product $f(w_0,v_1,\ldots,v_m)g(w_0,v_1,\ldots,v_m)$ and write it as
\[
f(w_0,v_1,\ldots,v_m)g(w_0,v_1,\ldots,v_m)=\sum_{\alpha\in\mathbb N_0^m}(v-w)^\alpha h_\alpha,
\]
where
\begin{equation}
h_\alpha\equiv\sum_{\substack{\gamma\in\mathbb{N}_0^m;\\ \alpha-\gamma\ge0}}f_{\alpha-\gamma}g_\gamma.
\label{coef-conv}
\end{equation}
With that, we define
\[
(f\odot_{\scriptscriptstyle \xi} g)(\zeta)\equiv\sum_{\alpha\in\mathbb N_0^m}(\zeta-\xi)^\alpha h_\alpha.
\]

Now, assume $f_0\in\mathcal{A}$ is invertible. Then, in a neighborhood of $w$ in $\Omega\subset\mathbb{K}^m$, we have
\[
(f(w_0,v_1,\ldots, v_n))^{-1}=\sum_{\alpha\in\mathbb N_0^n}(v-w)^\alpha d_\alpha
\]
for some coefficients $d_\alpha\in\mathcal{A}$. We set
\begin{equation}
f^{-\odot_{\scriptscriptstyle \xi}}(\zeta)=\sum_{\alpha\in\mathbb N_0^m}(\zeta-\xi)^\alpha d_\alpha.
\label{odot-inv}
\end{equation}
This is well defined since the coefficients $f_\alpha$ (and hence $d_\alpha$) are uniquely determined by $f$.\smallskip

An important fact is that the Cauchy product in the way we defined is dependent on the center of the power series (or of the polynomials). As an example, consider the quaternionic setting, where $e_1=i$, $e_2=j$, and $e_3=k$. The polynomial $P(\zeta)=\zeta_1^2 j$ can be seen as $P=p_1\odot_{\scriptscriptstyle 0} p_2$, where $p_1(\zeta)=\zeta_1 k$ and $p_2(\zeta)=\zeta_1 i$. However, $p_1$ and $p_2$ can also be rewritten with a center at $\xi$ such that $\xi_k=e_k$, $k\in\mathbb{Z}_3$. In fact,
\[
p_1(\zeta) = (\zeta_1-i)k - j \quad \text{and} \quad p_2(\zeta) = (\zeta_1-i)i-1.
\]
The convolution of $p_1$ and $p_2$ at this center is, then,
\[
Q(\zeta) = (p_1\odot_{\scriptscriptstyle \xi} p_2)(\zeta) = (\zeta_1-i)^2 j+j = \zeta_1^2 j -2\zeta_1 k
\]
since $\zeta_1$ and $i$ commute. Hence, $P\neq Q$, i.e., the convolution of $p_1$ and $p_2$ at the origin is different from their convolution at $\xi$.\smallskip

We also observe that the right-hand side Cauchy product can be defined as
\[
(f\odot_{\scriptscriptstyle \xi}^R g)(z)\equiv\sum_{\alpha\in\mathbb N_0^m} h_\alpha (\zeta-\xi)^\alpha,
\]
where
\[
f(\zeta)=\sum_{\alpha\in\mathbb N^m}f_\alpha(\zeta-\xi)^\alpha, \quad g(\zeta)=\sum_{\alpha\in\mathbb{N}_0^m}g_\alpha(\zeta-\xi)^\alpha
\]
and the coefficients $h_\alpha$ are once again given by \eqref{coef-conv}.\smallskip

We now introduce Gleason's problem.

\begin{problem}
Given a hyperholomorphic function $f$ with domain given by $\mathscr{U}_\xi(\sigma)$, find functions $g_1,\cdots,g_m$ such that
\[
f(\zeta)-f(\xi) = \sum_{k=1}^m (\zeta_k-\xi_k)\odot_{\scriptscriptstyle \xi} g_k(\zeta)
\]
for every $\zeta\in\mathscr{U}_\xi(\sigma)$.
\label{gleason}
\end{problem}

Observe that expression \eqref{general-sol} is a solution to this problem with the pointwise product -- instead of the $\odot_{\scriptscriptstyle \xi}$-product. The disadvantage of the poinwise product is that, as already discussed, it is not necessary hyperholomorphic. However, \eqref{general-sol} is usefull for us here, since it shows that $g_k=\R_k(\xi)f$, $k\in\mathbb{Z}_m$ is a solution to Problem \ref{gleason}. In fact, for every $\zeta$ such that $v_0=w_0$, the $\odot_{\scriptscriptstyle \xi}$-product coincides with the pointwise product and Gleason's problem \eqref{gleason} becomes equivalent to find solutions of \eqref{general-sol}. In turn, this fact allows writing in general
\[
f(\zeta)-f(\xi) = \sum_{k=1}^m (\zeta_k-\xi_k)\odot_{\scriptscriptstyle \xi} (\R_k(\xi)f)(\zeta).
\]

Now, we show that those are not all the solutions to the problem. Let $\mathcal{G}$ denote the space of functions $f\in\mathcal{G}$ for which exist $g_1,g_2,\cdots,g_m\in\mathcal{G}$ that solve Gleason's problem. The space $\mathcal{G}$ is said to be resolvent-invariant. Moreover, let $\mathscr{R}$ be the space of $\R_k(\xi)$-invariant functions, called backward-shift-invariant, i.e., the space for which $g_k=\R_k(\xi)f$. Our next results aim to prove that $\mathscr{R}\subsetneqq\mathcal{G}$. We allow, in general, the functions to be matrix-valued. First, we characterize the elements of $\mathcal{G}$ with the following proposition.

\begin{proposition}
A function $f$ belongs to a finite-dimensional resolvent-invariant space $\mathcal{G}$ if and only if it can be spanned by the columns of a matrix-valued function of the type
\begin{equation}
G(\zeta) = G(\xi)\odot_{\scriptscriptstyle \xi}\left(I-\sum_{k=1}^m(\zeta_k-\xi_k)A_k\right)^{-\odot_{\scriptscriptstyle \xi}},
\label{g-span}
\end{equation}
where $A_k$, $k\in\mathbb{Z}_m$, are constant matrices with entries in $\mathcal{A}$.
\label{resolvent}
\end{proposition}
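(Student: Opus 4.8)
The plan is to prove both implications by exploiting the fact that the backward-shift operators $\R_k(\xi)$ act on Fueter series in the clean way described in Proposition \ref{rk-ps}, together with the algebraic identity satisfied by the $\odot_{\scriptscriptstyle \xi}$-product. First I would set up the correspondence that makes the proof work: because of the substitution $v_0=w_0$ used throughout Section \ref{sec-series}, a hyperholomorphic function $f$ with Fueter expansion $f(\zeta)=\sum_{\alpha}(\zeta-\xi)^\alpha f_\alpha$ is completely determined by the ordinary (commutative) power series $\sum_\alpha (v-w)^\alpha f_\alpha$ in the variables $v_k-w_k$, and in that picture the operators $\R_k(\xi)$ become, up to the factors $\alpha_k/|\alpha|$ from Proposition \ref{rk-ps}, essentially differentiation-type operators, while $\odot_{\scriptscriptstyle \xi}$ becomes the genuine pointwise product. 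This is the device that converts everything to a statement about finitely many functions closed under the $\R_k(\xi)$.

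For the ``if'' direction: suppose $f$ is a column of $G$ as in \eqref{g-span}. I would expand the $\odot_{\scriptscriptstyle \xi}$-inverse as a geometric-type series $\bigl(I-\sum_k(\zeta_k-\xi_k)A_k\bigr)^{-\odot_{\scriptscriptstyle \xi}}=\sum_{N\ge 0}\bigl(\sum_k(\zeta_k-\xi_k)A_k\bigr)^{\odot_{\scriptscriptstyle \xi} N}$, valid near $\xi$, so that $G$ lives in the finite-dimensional right-$\mathcal{A}$-module spanned by the entries of $G(\xi)\odot_{\scriptscriptstyle \xi}\bigl(\sum_k(\zeta_k-\xi_k)A_k\bigr)^{\odot_{\scriptscriptstyle \xi} N}$, $N\ge0$. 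Applying $\R_k(\xi)$ and using Proposition \ref{rk-ps} term by term, one checks that $\R_k(\xi)$ maps this module into itself (the factor $\alpha_k/|\alpha|$ does not leave the span, since the span is over all $N$); hence the span is backward-shift invariant, a fortiori resolvent-invariant via $g_k=\R_k(\xi)f$, and finite-dimensional. So every column of $G$ belongs to a finite-dimensional resolvent-invariant space.

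For the ``only if'' direction: let $\mathcal{G}$ be finite-dimensional and resolvent-invariant, and pick a finite generating family; arrange it as the columns of a matrix-valued function $G$, normalized so that $G(\xi)$ is the evaluation at $\xi$. Because $\mathcal{G}$ is resolvent-invariant, for each $k$ there is a constant matrix $A_k$ (entries in $\mathcal{A}$) with $\R_k(\xi)G=G A_k$ — this is exactly the statement that the finite-dimensional space is invariant under each $\R_k(\xi)$, written in coordinates relative to the chosen basis. Now apply Gleason's representation \eqref{general-sol} in its $\odot_{\scriptscriptstyle \xi}$-form, $G(\zeta)-G(\xi)=\sum_k(\zeta_k-\xi_k)\odot_{\scriptscriptstyle \xi}(\R_k(\xi)G)(\zeta)=\sum_k(\zeta_k-\xi_k)\odot_{\scriptscriptstyle \xi} G(\zeta)A_k$, i.e. $G=G(\xi)+\sum_k(\zeta_k-\xi_k)\odot_{\scriptscriptstyle \xi} G A_k$, which rearranges to $G\odot_{\scriptscriptstyle \xi}\bigl(I-\sum_k(\zeta_k-\xi_k)A_k\bigr)=G(\xi)$; inverting the factor (legitimate near $\xi$ since at $\zeta=\xi$ it is $I$) gives \eqref{g-span}. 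Finally, since the chosen columns generate $\mathcal{G}$, every $f\in\mathcal{G}$ is a combination of columns of $G$.

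The main obstacle I expect is the ``only if'' direction, specifically extracting the constant matrices $A_k$ with $\R_k(\xi)G=GA_k$ and justifying that the $\odot_{\scriptscriptstyle \xi}$-factor $\bigl(I-\sum_k(\zeta_k-\xi_k)A_k\bigr)$ is invertible for the $\odot_{\scriptscriptstyle \xi}$-product — one must be careful that $A_k$ have entries in the (noncommutative) algebra $\mathcal{A}$, so the coefficient-convolution \eqref{coef-conv} and the inverse \eqref{odot-inv} have to be read with the correct left/right placement, and one must check that \eqref{general-sol} really does hold with the $\odot_{\scriptscriptstyle \xi}$-product rather than only the pointwise product (this is precisely the point made right after Problem \ref{gleason}, that on $v_0=w_0$ the two products agree, so the identity propagates to all $\zeta$). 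Handling the noncommutativity bookkeeping in the convolution and confirming $G(\xi)$ can be taken to be the value of $G$ at $\xi$ — equivalently that one may normalize the generating family so that its "constant terms" are the rows of $G(\xi)$ — is where the care is needed; everything else is a direct unwinding of Proposition \ref{rk-ps} and \eqref{general-sol}.
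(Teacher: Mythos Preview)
Your proposal contains a genuine gap: you conflate \emph{resolvent-invariance} with \emph{backward-shift invariance}, and these are explicitly distinguished in the paper (indeed, the corollary immediately following this proposition shows $\mathscr{R}\subsetneqq\mathcal{G}$, with equality precisely when the $A_k$ commute).

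In your ``only if'' direction you write: ``Because $\mathcal{G}$ is resolvent-invariant, for each $k$ there is a constant matrix $A_k$ with $\R_k(\xi)G=GA_k$.'' But resolvent-invariance says only that for each $f\in\mathcal{G}$ there exist \emph{some} functions $g_k\in\mathcal{G}$ solving Gleason's problem; it does \emph{not} say $\R_k(\xi)f\in\mathcal{G}$. The paper's argument uses exactly this: take the columns of $G$ as a basis, apply the Gleason hypothesis to get solutions $g_k\in\mathcal{G}$, and then express those $g_k$ in the basis as $g_k=GA_k\eta$. From there your rearrangement to \eqref{g-span} is correct.

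Your ``if'' direction has the mirror-image problem: you try to show the column span of $G$ is backward-shift invariant, but this is false when the $A_k$ do not commute. A quick check at order one shows $(\R_1(\xi)G)_{\iota_2}=\tfrac{1}{2}G(\xi)(A_1A_2+A_2A_1)$ while $(GA_1)_{\iota_2}=G(\xi)A_2A_1$, so $\R_1(\xi)G\ne GA_1$ in general, and the span need not be $\R_k(\xi)$-invariant. The correct (and shorter) argument is the paper's: from $G(\xi)=G(\zeta)\odot_{\xi}\bigl(I-\sum_k(\zeta_k-\xi_k)A_k\bigr)$ one reads off directly that $f(\zeta)-f(\xi)=\sum_k(\zeta_k-\xi_k)\odot_{\xi}GA_k\eta$, so $g_k=GA_k\eta$ are Gleason solutions lying in the column span of $G$. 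This verifies resolvent-invariance without ever invoking $\R_k(\xi)$.
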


\begin{proof}
Let $G$ be a matrix-valued hyperholomorphic function whose columns form a basis of $\mathcal{G}$ and $f\in\mathcal{G}$. Then, by definition, there exist a constant column matrix $\eta$ with entries in $\mathcal{A}$ such that $f=G\eta$ and functions $g_1,\hdots,g_m\in\mathcal{G}$ such that
\[
f(\zeta)-f(\xi) = \sum_{k=1}^m (\zeta_k-\xi_k) \odot_{\scriptscriptstyle \xi} g_k(\zeta).
\]
Moreover, there exist constant matrices $A_k$, $k\in\mathbb{Z}_m$, such that $g_k=GA_k\eta$. Hence, the above expression can be rewritten as
\[
\left[G(\zeta) - G(\xi)\right]\eta = G(\zeta)\odot_{\scriptscriptstyle \xi}\sum_{k=1}^m (\zeta_k-\xi_k) A_k\eta,
\]
which implies that $G$ is given by \eqref{g-span}.\smallskip

Conversely, assuming $G$ is given by \eqref{g-span} and $f=G\eta$, where $\eta$ is a constant column matrix with entries in $\mathcal{A}$. Then, because
\[
G(\xi) = G(\zeta) \odot_{\scriptscriptstyle \xi} \left(I-\sum_{k=1}^m(\zeta_k-\xi_k)A_k\right),
\]
we have
\begin{eqnarray*}
f(\zeta)-f(\xi) & = & G(\xi)\odot_{\scriptscriptstyle \xi}\left(I-\sum_{k=1}^m(\zeta_k-\xi_k)A_k\right)^{-\odot_{\scriptscriptstyle \xi}}\eta-G(\xi)\eta \\
    & = & G(\zeta)\odot_{\scriptscriptstyle \xi}\left(I-\left(I-\sum_{k=1}^m(\zeta_k-\xi_k)A_k\right)\right)\eta \\
    & = & \sum_{k=1}^m(\zeta_k-\xi_k)\odot_{\scriptscriptstyle \xi} GA_k\eta,
\end{eqnarray*}
i.e., there exist functions $g_k=GA_k\eta\in\mathcal{G}$ which solves Gleason's problem for $f$.
\end{proof}

Now, we show that backward-shift-invariant functions are a particular type of resolvent-invariant functions.

\begin{corollary}
A function $f$ belongs to a finite-dimensional backward-shift invariant space $\mathscr{R}$ if and only it can be spanned by the columns of a matrix-valued function $G$ given by \eqref{g-span} where the matrices $A_k$, $k\in\mathbb{Z}_m$, commute among themselves.
\end{corollary}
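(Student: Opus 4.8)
The plan is to leverage Proposition \ref{resolvent}, which already identifies the members of a finite-dimensional resolvent-invariant space as the functions spanned by the columns of a $G$ of the form \eqref{g-span}; so the only additional content of the corollary is that the backward-shift-invariant subclass is singled out by the commutativity of the matrices $A_k$, and this commutativity should be precisely the shadow of the commutativity $\R_j(\xi)\R_k(\xi)=\R_k(\xi)\R_j(\xi)$ of the backward-shift operators noted earlier.

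For the forward implication, let $f$ belong to a finite-dimensional backward-shift-invariant space $\mathscr{R}$ and let $G$ be a matrix whose columns form a basis of $\mathscr{R}$, so $f=G\eta$ for a constant column $\eta$ with entries in $\mathcal{A}$. Invariance of $\mathscr{R}$ under each $\R_k(\xi)$ forces $\R_k(\xi)G = GA_k$ for constant matrices $A_k$ over $\mathcal{A}$; in particular $g_k:=\R_k(\xi)f=GA_k\eta$, and these are Gleason solutions lying in $\mathscr{R}$, so $\mathscr{R}$ is resolvent-invariant and Proposition \ref{resolvent} yields that $G$ has the form \eqref{g-span} with exactly these $A_k$. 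Applying the identity $\R_j(\xi)\R_k(\xi)=\R_k(\xi)\R_j(\xi)$ to $G$, and using that the $A_k$ are constant (hence $\R_j(\xi)(GA_k)=(\R_j(\xi)G)A_k$), gives $GA_jA_k=GA_kA_j$; linear independence of the columns of $G$ then gives $A_jA_k=A_kA_j$.

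For the converse, take $G$ of the form \eqref{g-span} with the $A_k$ pairwise commuting and set $f=G\eta$. It suffices to show that the column span of $G$, a finite-dimensional space of hyperholomorphic functions, is invariant under each $\R_k(\xi)$, i.e.\ that $\R_k(\xi)G=GA_k$; then $f$ lies in a finite-dimensional backward-shift-invariant space and $\R_k(\xi)f=GA_k\eta$ recovers the Gleason solution supplied by Proposition \ref{resolvent}. Write $G=G(\xi)\odot_{\scriptscriptstyle \xi}B$ with $B=\left(I-\sum_j(\zeta_j-\xi_j)A_j\right)^{-\odot_{\scriptscriptstyle \xi}}$ and expand $B$ as the $\odot_{\scriptscriptstyle \xi}$-geometric series $\sum_{n\ge 0}\left(\sum_j(\zeta_j-\xi_j)A_j\right)^{\odot_{\scriptscriptstyle \xi} n}$; since the $A_j$ commute, the sum over orderings in each power collapses to a multinomial coefficient and $B(\zeta)=\sum_{\alpha\in\mathbb{N}_0^m}\frac{|\alpha|!}{\alpha!}(\zeta-\xi)^\alpha A^\alpha$, where $A^\alpha=A_1^{\alpha_1}\cdots A_m^{\alpha_m}$. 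Now apply Proposition \ref{rk-ps} term by term and reindex by $\beta=\alpha-\iota_k$; using $(\beta+\iota_k)!=(\beta_k+1)\beta!$, $|\beta+\iota_k|=|\beta|+1$, and $A^{\beta+\iota_k}=A^\beta A_k$, the coefficients telescope back to those of $B$ with an extra right-hand factor $A_k$, i.e.\ $\R_k(\xi)B=B\odot_{\scriptscriptstyle \xi}A_k$; since $G(\xi)$ is constant, $\R_k(\xi)G=G\odot_{\scriptscriptstyle \xi}A_k=GA_k$, as desired.

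The main obstacle is the converse: Gleason solutions are not unique, so it is not automatic that the particular solution $g_k=GA_k\eta$ attached to \eqref{g-span} coincides with the backward-shift solution $\R_k(\xi)f$, and establishing this requires the explicit power-series computation above, resting on Proposition \ref{rk-ps} together with the multinomial identity for symmetrized products. This is also where the hypothesis is genuinely used: without commutativity of the $A_k$, the sum over orderings in the expansion of $B$ does not factor, $\R_k(\xi)B$ ceases to be of the form $B\odot_{\scriptscriptstyle \xi}A_k$, and the column span of $G$ need not be backward-shift-invariant.
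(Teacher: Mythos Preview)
Your proof is correct and follows essentially the same approach as the paper: the forward direction uses $\R_k(\xi)G=GA_k$ together with the commutativity of the backward-shift operators to force $A_jA_k=A_kA_j$, and the converse expands $G$ via the multinomial identity (valid precisely because the $A_k$ commute) and then applies Proposition~\ref{rk-ps} term by term to recover $\R_k(\xi)G=GA_k$. Your write-up is in fact slightly more explicit than the paper's in invoking the linear independence of the columns of $G$ to pass from $GA_jA_k=GA_kA_j$ to $A_jA_k=A_kA_j$.
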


\begin{proof}
Because we already know that $g_k=\R_k(\xi)f$ is a solution to Gleason's problem, we can use the fact that $\mathscr{R}\subset\mathcal{G}$. Then, let $f\in\mathscr{R}$ be given by $f=G\eta$, where $G$ is of the form \eqref{g-span}, and $\eta$ is a constant column matrix with entries in $\mathcal{A}$. Hence, our goal is to show that the constant matrices $A_k$, $k\in\mathbb{Z}_m$, in the definition of $G$ are commutative.\smallskip

From the proof of Proposition \ref{resolvent}, we know that $g_k=GA_k\eta$ $\R_k(\xi)f\in\mathcal{G}$. Therefore, we must have $\R_k(\xi)G=GA_k$. Now, because the operators $\R_k(\xi)$ commute, $\R_k(\xi)\R_j(\xi)G=\R_j(\xi)\R_k(\xi)G$ and, then, $A_kA_j=A_jA_k$ for every $k,j\in\mathbb{Z}_m$, as we wanted to show.\smallskip

Conversely, let $f$ be spanned by the columns of $G$ given by \eqref{g-span} with the constant matrices $A_k$ being commutative. Let, moreover, $\eta$ be a constant column matrix with entries in $\mathcal{A}$ such that $f=G\eta$. Then, our goal is to show that the solutions $g_k=GA_k\eta$ to Gleason's problem can be expressed as $g_k=\R_k(\xi)f$.\smallskip

First, observe that the commutativity of the matrices $A_k$ allows us to write $G$ as
\[
G(\zeta) = \sum_{\alpha\in\mathbb{N}_0^m} \frac{|\alpha|!}{\alpha!} (\zeta-\xi)^\alpha G(\xi) A^\alpha,
\]
where $A^\alpha= A_1^{\alpha_1}A_2^{\alpha_2}\cdots A_m^{\alpha_m}$. Then, using Proposition \ref{rk-ps},
\begin{eqnarray*}
(\R_k(\xi) f)(\zeta) & = & \sum_{\substack{\alpha\in\mathbb{N}_0^m; \\ \alpha\ge \iota_k}} \frac{(|\alpha|-1)!}{(\alpha-\iota_k)!} (\zeta-\xi)^{\alpha-\iota_k} G(\xi) A^\alpha\eta \\
              & = & \sum_{\substack{\alpha\in\mathbb{N}_0^m; \\ \alpha\ge \iota_k}} \frac{(|\alpha|-1)!}{(\alpha-\iota_k)!} (\zeta-\xi)^{\alpha-\iota_k} G(\xi) A^{\alpha-\iota_k} A_k\eta \\
              & = & G(\zeta)A_k\eta \\
              & = & g_k(\zeta),
\end{eqnarray*}
as we wanted to show.
\end{proof}

\begin{remark}
Before we go to the next section, observe that if $f\in\mathcal{G}$, there exists a constant column matrix $\eta$ with entries in $\mathcal{A}$ such that $f = G\eta$, where $G$ is given by \eqref{g-span}. Also, $f$ admits solutions to Gleason's problem, which are given by $g_k=GA_k\eta$. Then,
\begin{eqnarray*}
f(\zeta) & = & f(\xi)+G(\zeta)\odot_{\scriptscriptstyle \xi}\sum_{k=1}^m (\zeta_k-\xi_k) A_k\eta \\
         & = & f(\xi) + G(\xi)\odot_{\scriptscriptstyle \xi} \left(I-\sum_{k=1}^m (\zeta_k-\xi_k) A_k\right)^{-\odot_{\scriptscriptstyle \xi}} \odot_{\scriptscriptstyle \xi} \sum_{k=1}^m (\zeta_k-\xi_k) A_k\eta.
\end{eqnarray*}
The above expression characterizes a {\it hyperholomorphic rational function}, the topic of our next section.
\label{g-rational}
\end{remark}

\section{Hyperholomorphic rational functions}
\setcounter{equation}{0}
\label{sec-rat}

Rational functions are, on the one hand, simply quotients of polynomials. On the other hand, they are a fundamental player in many areas of analysis and other related topics. In this section, we study hyperholomorphic rational functions or rational functions of Fueter variables. Our focus is, in particular, the rational functions which are analytic in a neighborhood a fixed $\xi\in\mathscr{H}^m$. From the theory of linear systems, we know that such rational functions can be written in the following form
\begin{equation}
R(\zeta)=D+C\odot_{\scriptscriptstyle \xi} \left(I-\sum_{k=1}^m(\zeta_k-\xi_k) A_k\right)^{-\odot_{\scriptscriptstyle \xi}} \odot_{\scriptscriptstyle \xi} \left(\sum_{k=1}^m(\zeta_k-\xi_k) B_k\right),
\label{real123}
\end{equation}
where $A_k,B_k,C$ and $D$, $k\in\mathbb{Z}_m$, are matrices of appropriate sizes with entries in $\mathcal{A}$. Expression \eqref{real123} is called a {\it realization} of $R$.\smallskip

Observe that expression \eqref{real123} is trivially a ratio, with respect to the $\odot$-product, of Fueter polynomials. The main goal of the next results we present in the sequel is to prove the converse, i.e., to show that every rational function analytic at the origin admits a realization \eqref{real123}. We start by showing that the inverse of a function that is invertible at the origin and whose realization is given by \ref{real123} also admits a realization.

\begin{proposition}
\label{leminv}
Assume that $D$ in \eqref{real123} is invertible. Then,
\begin{equation}
R(\zeta)^{-\odot_{\scriptscriptstyle \xi}}=D^{-1}-D^{-1}C\odot_{\scriptscriptstyle \xi} \left(I-\sum_{k=1}^m(\zeta_k-\xi_k)A_k^{\Box}\right)^{-\odot_{\scriptscriptstyle \xi}} \odot_{\scriptscriptstyle \xi} \left(\sum_{k=1}^m(\zeta_k-\xi_k)B_k\right)D^{-1},
\label{leminv1}
\end{equation}
where
\begin{equation}
A_k^\Box=A_k-B_kD^{-1}C.
\end{equation}
\end{proposition}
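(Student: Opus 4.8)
The plan is to verify directly that the right-hand side of \eqref{leminv1}, call it $S(\zeta)$, satisfies $R(\zeta)\odot_{\scriptscriptstyle \xi} S(\zeta)=I$ (and symmetrically $S\odot_{\scriptscriptstyle \xi} R=I$), using only formal manipulations with the $\odot_{\scriptscriptstyle \xi}$-product. The crucial observation that makes this a purely algebraic computation is the one already exploited in the previous section: after setting $v_0=w_0$, all the $\odot_{\scriptscriptstyle \xi}$-products become ordinary pointwise products of power series in the commuting variables $v_k-w_k$ with coefficients in $\mathcal{A}$, so the $\odot_{\scriptscriptstyle \xi}$-product is associative and the usual resolvent identities apply. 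Thus it suffices to check the identity in that ``frozen'' setting, where $\odot_{\scriptscriptstyle \xi}$ behaves like a genuine (associative, but noncommutative in the coefficients) product of formal series, and then transport the resulting identity of coefficients back to the Fueter-variable expression via \eqref{odot-inv}.

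First I would introduce the shorthand $Z=\sum_{k=1}^m(\zeta_k-\xi_k)A_k$, $Z^\Box=\sum_{k=1}^m(\zeta_k-\xi_k)A_k^\Box$, and $W=\sum_{k=1}^m(\zeta_k-\xi_k)B_k$, so that $R=D+C\odot_{\scriptscriptstyle \xi}(I-Z)^{-\odot_{\scriptscriptstyle \xi}}\odot_{\scriptscriptstyle \xi} W$ and $S=D^{-1}-D^{-1}C\odot_{\scriptscriptstyle \xi}(I-Z^\Box)^{-\odot_{\scriptscriptstyle \xi}}\odot_{\scriptscriptstyle \xi} WD^{-1}$. The key relation to record is that $Z^\Box=Z-WD^{-1}C$ as formal power series (this is just $A_k^\Box=A_k-B_kD^{-1}C$ multiplied by $(\zeta_k-\xi_k)$ and summed; note $D^{-1}C$ is a constant matrix, so it pulls through the scalar Fueter variables without obstruction). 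Then I would compute $R\odot_{\scriptscriptstyle \xi} S$ by expanding: the $DD^{-1}$ term gives $I$, and the remaining terms must cancel. Grouping them, one is led to show that
\[
C\odot_{\scriptscriptstyle \xi}(I-Z)^{-\odot_{\scriptscriptstyle \xi}}\odot_{\scriptscriptstyle \xi} W D^{-1}
- C\odot_{\scriptscriptstyle \xi}(I-Z^\Box)^{-\odot_{\scriptscriptstyle \xi}}\odot_{\scriptscriptstyle \xi} WD^{-1}
- C\odot_{\scriptscriptstyle \xi}(I-Z)^{-\odot_{\scriptscriptstyle \xi}}\odot_{\scriptscriptstyle \xi} W D^{-1} C\odot_{\scriptscriptstyle \xi}(I-Z^\Box)^{-\odot_{\scriptscriptstyle \xi}}\odot_{\scriptscriptstyle \xi} WD^{-1}=0.
\]
Factoring $C\odot_{\scriptscriptstyle \xi}(\cdots)$ on the left and $\odot_{\scriptscriptstyle \xi} WD^{-1}$ on the right, this reduces to the single resolvent identity
\[
(I-Z)^{-\odot_{\scriptscriptstyle \xi}}-(I-Z^\Box)^{-\odot_{\scriptscriptstyle \xi}}
=(I-Z)^{-\odot_{\scriptscriptstyle \xi}}\odot_{\scriptscriptstyle \xi}(WD^{-1}C)\odot_{\scriptscriptstyle \xi}(I-Z^\Box)^{-\odot_{\scriptscriptstyle \xi}},
\]
which is exactly the second resolvent identity applied to $I-Z$ and $I-Z^\Box$, valid because $(I-Z)-(I-Z^\Box)=Z^\Box-Z+\cdots$; more precisely $(I-Z^\Box)-(I-Z)=Z-Z^\Box=WD^{-1}C$, and the standard manipulation $(I-Z)^{-1}-(I-Z^\Box)^{-1}=(I-Z)^{-1}\big[(I-Z^\Box)-(I-Z)\big](I-Z^\Box)^{-1}$ holds for formal series in commuting scalar variables with matrix coefficients. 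The symmetric computation $S\odot_{\scriptscriptstyle \xi} R=I$ is entirely analogous, factoring instead on the other side.

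The main obstacle, and the step deserving the most care, is the bookkeeping of noncommutativity: the Fueter variables $\zeta_k-\xi_k$ do not commute with the matrix coefficients $A_k,B_k,C,D$, so one cannot freely reorder factors. The remedy — already implicit in the paper's treatment of the $\odot_{\scriptscriptstyle \xi}$-product and its inverse \eqref{odot-inv} — is to do the whole computation at the level of the coefficient series obtained by setting $v_0=w_0$, where the variables $v_k-w_k$ are genuine commuting indeterminates and only the $\mathcal{A}$-valued coefficients fail to commute; there the resolvent identities above are legitimate formal-series statements. Two small points to check along the way: that $(I-Z^\Box)^{-\odot_{\scriptscriptstyle \xi}}$ is well defined, which holds because $Z^\Box$ has no constant term so $I-Z^\Box$ is invertible at $\xi$; and that the intermediate factors $WD^{-1}C$ and $WD^{-1}$ are again of the form ``$\sum_k(\zeta_k-\xi_k)(\text{constant matrix})$'' or products thereof, so that all expressions remain within the class of rational functions analytic at $\xi$ for which $\odot_{\scriptscriptstyle \xi}$ and its inverse are defined. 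Once the coefficient identity is established, \eqref{leminv1} follows by reading it back through the correspondence $\mathbb{K}^{m+1}\simeq\mathscr{H}^m$.
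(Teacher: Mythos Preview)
Your proposal is correct and follows essentially the same route as the paper: both expand $R\odot_{\scriptscriptstyle\xi}S$, use the key identity $\sum_k(\zeta_k-\xi_k)B_kD^{-1}C=(I-Z^\Box)-(I-Z)$, and reduce everything to the second resolvent identity $(I-Z)^{-\odot_{\scriptscriptstyle\xi}}-(I-Z^\Box)^{-\odot_{\scriptscriptstyle\xi}}=(I-Z)^{-\odot_{\scriptscriptstyle\xi}}\odot_{\scriptscriptstyle\xi}(Z-Z^\Box)\odot_{\scriptscriptstyle\xi}(I-Z^\Box)^{-\odot_{\scriptscriptstyle\xi}}$. Your extra care in justifying associativity via the restriction $v_0=w_0$ is a welcome elaboration of a step the paper leaves implicit.
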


\begin{proof}
The proof follows in a similar way to the one presented for the quaternions in \cite{MR2124899} and for the Grassmann algebra in \cite{alpay2018positivity}. Note that
\begin{eqnarray*}
\hspace{-.2cm}R(\zeta)\odot_{\scriptscriptstyle \xi} R(\zeta)^{-1} & \hspace{-.2cm}= & \hspace{-.2cm} I-C \odot_{\scriptscriptstyle \xi} \left[- \left(I-\sum_{k=1}^m(\zeta_k-\xi_k)A_k\right)^{-\odot_{\scriptscriptstyle \xi}}\hspace{-.2cm}\odot_{\scriptscriptstyle \xi} G(\zeta)\odot_{\scriptscriptstyle \xi} \left(I-\sum_{k=1}^m(\zeta_k-\xi_k)A_k^{\Box}\right)^{-\odot_{\scriptscriptstyle \xi}}\hspace{-.2cm}- \right. \\
          & \hspace{-.2cm} & \hspace{-.2cm}- \left. \left(I-\sum_{k=1}^m(\zeta_k-\xi_k)A_k^{\Box}\right)^{-\odot_{\scriptscriptstyle \xi}} + \left(I-\sum_{k=1}^m(\zeta_k-\xi_k)A_k\right)^{-\odot_{\scriptscriptstyle \xi}}  \right] \odot_{\scriptscriptstyle \xi} \left(\sum_{k=1}^m(\zeta_k-\xi_k)B_k\right)D^{-1},
\end{eqnarray*}
where
\begin{eqnarray*}
G(\zeta) & = & \left(\sum_{k=1}^m(\zeta_k-\xi_k)B_k\right)D^{-1}C = \sum_{k=1}^m(\zeta_k-\xi_k)\left(A_k-A_k^\Box\right) \\
         & = & \left(I-\sum_{k=1}^m(\zeta_k-\xi_k)A_k^\Box\right) - \left(I-\sum_{k=1}^m(\zeta_k-\xi_k)A_k\right).
\end{eqnarray*}
Therefore,
\[
R(\zeta)\odot_{\scriptscriptstyle \xi} R(\zeta)^{-1} = I,
\]
as we wanted to show.
\end{proof}

\begin{proposition}
\label{lemsumprod}
Let
\begin{equation}
R_u(\zeta)=D_u+C_u\odot_{\scriptscriptstyle \xi} \left(I_{N_u}-\sum_{k=1}^m(\zeta_k-\xi_k)(A_u)_k\right)^{-\odot_{\scriptscriptstyle \xi}} \odot_{\scriptscriptstyle \xi} \left(\sum_{k=1}^m(\zeta_k-\xi_k)(B_u)_k\right),
\end{equation}
where $u=1,2$, be two realizations of rational functions with compatible sizes. Then, for $k\in\mathbb{Z}_m$,\\
$(1)$ a realization of $R_1(\zeta)\odot_{\scriptscriptstyle \xi} R_2(\zeta)$ is given by
\begin{equation}
A_k=\begin{pmatrix}(A_1)_k&(B_k)_1C_2\\ 0&(A_2)_k\end{pmatrix},\quad B_k=\begin{pmatrix} (B_1)_kD_2\\ (B_2)_k\end{pmatrix},\quad C=\begin{pmatrix} C_1& D_1C_2\end{pmatrix},\quad D=D_1D_2;
\end{equation}
$(2)$ a realization of $R_1(\zeta)+R_2(\zeta)$ is given by
\begin{equation}
A_k=\begin{pmatrix}(A_1)_k&0\\ 0&(A_2)_k\end{pmatrix},\quad B_k=\begin{pmatrix} (B_1)_k\\ (B_2)_k\end{pmatrix},\quad C=\begin{pmatrix} C_1& C_2\end{pmatrix},\quad D=D_1+D_2;
\end{equation}
$(3)$ a realization of $\begin{pmatrix}R_1(\zeta)&R_2(\zeta)\end{pmatrix}$ is given by
\begin{equation}
A_k=\begin{pmatrix}(A_1)_k&0\\ 0&(A_2)_k\end{pmatrix},\quad B_k=\begin{pmatrix} (B_1)_k&0\\0& (B_2)_k\end{pmatrix},\quad C=\begin{pmatrix} C_1& C_2\end{pmatrix},
\quad D=\begin{pmatrix}D_1& D_2\end{pmatrix};
\end{equation}
$(4)$ a realization of $\begin{pmatrix}R_1(\zeta)\\R_2(\zeta)\end{pmatrix}$ is given by
\begin{equation}
A_k=\begin{pmatrix}(A_1)_k&0\\ 0&(A_2)_k\end{pmatrix},\quad B_k=\begin{pmatrix} (B_1)_k\\(B_2)_k\end{pmatrix},\quad C=\begin{pmatrix} C_1&0\\ 0& C_2\end{pmatrix},
\quad D=\begin{pmatrix}D_1\\ D_2\end{pmatrix}.
\end{equation}
\end{proposition}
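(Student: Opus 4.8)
The plan is to recognise all four formulas as instances of the classical series, parallel, row, and column connection rules of linear system theory, read in the appropriate algebra. Fix $\xi$ and set $X_k := \zeta_k - \xi_k$. By the coefficientwise correspondence $\sum_{\alpha}(\zeta-\xi)^\alpha f_\alpha \mapsto \sum_\alpha X^\alpha f_\alpha$, the set of matrix-valued Fueter series centred at $\xi$, equipped with $\odot_{\scriptscriptstyle \xi}$, is isomorphic as a ring to the matrix algebra over the formal power series ring $\mathcal{A}[[X_1,\ldots,X_m]]$ in which the $X_k$ are central relative to the coefficients; this is exactly what the defining rule $(\zeta-\xi)^\alpha u\odot_{\scriptscriptstyle \xi}(\zeta-\xi)^\beta v=(\zeta-\xi)^{\alpha+\beta}uv$ and the computation of $h_\alpha$ in \eqref{coef-conv} assert. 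Under this dictionary a realization \eqref{real123} becomes, in the ordinary algebraic sense, $R = D + C\bigl(I-\sum_{k=1}^m X_k A_k\bigr)^{-1}\bigl(\sum_{k=1}^m X_k B_k\bigr)$, where the inverse is the Neumann series, which exists because the order-zero (in the $X_k$) term of $I-\sum_k X_k A_k$ is the invertible matrix $I$.

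With this in hand, $(1)$ is the cascade (product) realization. Abbreviate $M_u := I_{N_u}-\sum_k X_k(A_u)_k$ and $\beta_u := \sum_k X_k(B_u)_k$, so that $R_u = D_u + C_u M_u^{-1}\beta_u$, and put $\Gamma := \sum_k X_k(B_1)_k C_2 = \beta_1 C_2$. With the block-upper-triangular $A_k$ of the statement, $I-\sum_k X_k A_k = \left(\begin{smallmatrix} M_1 & -\Gamma \\ 0 & M_2 \end{smallmatrix}\right)$, and the block-triangular inversion identity (valid over any associative unital ring) gives $\bigl(I-\sum_k X_k A_k\bigr)^{-1} = \left(\begin{smallmatrix} M_1^{-1} & M_1^{-1}\Gamma M_2^{-1} \\ 0 & M_2^{-1}\end{smallmatrix}\right)$. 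Substituting this, together with $C=\begin{pmatrix}C_1 & D_1 C_2\end{pmatrix}$, the stated $B_k$ (so that $\sum_k X_k B_k = \left(\begin{smallmatrix}\beta_1 D_2\\ \beta_2\end{smallmatrix}\right)$), and $D=D_1 D_2$, into $D + C\bigl(I-\sum_k X_k A_k\bigr)^{-1}\bigl(\sum_k X_k B_k\bigr)$ and expanding, one obtains the four terms $D_1 D_2 + C_1 M_1^{-1}\beta_1 D_2 + D_1 C_2 M_2^{-1}\beta_2 + C_1 M_1^{-1}\Gamma M_2^{-1}\beta_2$; using $\Gamma=\beta_1 C_2$ this equals $(D_1 + C_1 M_1^{-1}\beta_1)(D_2 + C_2 M_2^{-1}\beta_2) = R_1\odot_{\scriptscriptstyle \xi} R_2$.

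Statements $(2)$--$(4)$ are even more immediate. Now the $A_k$ are block-diagonal, hence $\bigl(I-\sum_k X_k A_k\bigr)^{-1} = \mathrm{diag}(M_1^{-1},M_2^{-1})$, and the result is read off directly: for $(2)$, with $C=\begin{pmatrix}C_1 & C_2\end{pmatrix}$, $B_k=\left(\begin{smallmatrix}(B_1)_k\\(B_2)_k\end{smallmatrix}\right)$ and $D=D_1+D_2$ one gets $C\,\mathrm{diag}(M_1^{-1},M_2^{-1})\bigl(\sum_k X_k B_k\bigr)+D = C_1 M_1^{-1}\beta_1 + C_2 M_2^{-1}\beta_2 + D_1 + D_2 = R_1+R_2$; for $(3)$, the block-diagonal $B_k$ and the row $D=\begin{pmatrix}D_1 & D_2\end{pmatrix}$ yield the row $\begin{pmatrix}R_1 & R_2\end{pmatrix}$; and $(4)$ is obtained symmetrically with $C$ block-diagonal and $D$ a column. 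In all four cases one should record that the sizes indicated in the statement are compatible --- so that the block matrices and the products $D_1 D_2$, $D_1 C_2$, $(B_1)_k C_2$, etc., make sense --- and that each new $I-\sum_k X_k A_k$ again has order-zero term $I$, so its $-\odot_{\scriptscriptstyle \xi}$-inverse exists.

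There is no substantial obstacle here. The only steps requiring genuine, if light, care are establishing the ring isomorphism of the first paragraph --- namely that $\odot_{\scriptscriptstyle \xi}$ is associative and that under it the $X_k$ are central relative to coefficients, so that $I-\sum_k X_k A_k$ may legitimately be treated as a matrix over $\mathcal{A}[[X_1,\ldots,X_m]]$ and inverted by the Neumann series --- and the block-matrix bookkeeping in $(1)$. Everything else is the standard realization calculus, carried out exactly as in the proof of Proposition \ref{leminv}.
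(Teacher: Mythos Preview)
Your argument is correct and is precisely the approach the paper has in mind: the paper does not give a proof at all, stating only that ``the proof of the above proposition follows like the one for the classical case'' and omitting it. Your explicit identification of the $\odot_{\scriptscriptstyle\xi}$-algebra with matrices over $\mathcal{A}[[X_1,\ldots,X_m]]$ (with central $X_k$) and the block computations for the cascade, parallel, row, and column connections are exactly the classical realization calculus the authors are invoking, so you have simply written out what they chose to suppress.
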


Similarly to the proof of Proposition \ref{leminv}, the proof of the above proposition follows like the one for the classical case. It is, then, omitted here.\smallskip

With the results presented until now, we already know that the $\odot_{\scriptscriptstyle \xi}$-inverse of a realization and that the $\odot_{\scriptscriptstyle \xi}$-product of the two realizations admit a realization themselves (Proposition \ref{leminv} and the first part of Proposition \ref{lemsumprod}). Therefore, we only need to show that every Fueter polynomial admits a realization to conclude that every rational function with Fueter variables analytic at the origin admits a realization \eqref{real123}.

\begin{proposition}
\label{polreal}
Any Fueter polynomial admits a realization.
\end{proposition}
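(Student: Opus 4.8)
The plan is to show that every Fueter polynomial $(\zeta-\xi)^\alpha u$, with $\alpha\in\mathbb N_0^m$ and $u\in\mathcal A$ (more generally a constant matrix with entries in $\mathcal A$), admits a realization of the form \eqref{real123}. By the already-established Proposition \ref{lemsumprod}(1), the class of rational functions admitting a realization is closed under $\odot_{\scriptscriptstyle \xi}$-products, and by Proposition \ref{lemsumprod}(2) it is closed under sums; since a general Fueter polynomial is a finite $\odot_{\scriptscriptstyle \xi}$-sum of monomials $(\zeta-\xi)^\alpha u$, and each such monomial is a finite $\odot_{\scriptscriptstyle \xi}$-product of the degree-one factors $(\zeta_k-\xi_k)$ together with a constant, it suffices to realize (a) the constant function $u$, and (b) each coordinate function $(\zeta_k-\xi_k)$.

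First I would handle the constant: $u$ is realized by taking $D=u$ and all of $A_k,B_k,C$ equal to $0$ (so the middle term vanishes identically), which is trivially of the form \eqref{real123}. Next I would realize the single coordinate $(\zeta_\ell-\xi_\ell)$: take $D=0$, $C=I$, $A_k=0$ for all $k$ — so that $\left(I-\sum_k(\zeta_k-\xi_k)A_k\right)^{-\odot_{\scriptscriptstyle \xi}}=I$ — and $B_k=\delta_{k\ell}I$. Then \eqref{real123} collapses to $C\odot_{\scriptscriptstyle \xi}\left(\sum_k(\zeta_k-\xi_k)B_k\right)=(\zeta_\ell-\xi_\ell)$, as desired. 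With these two base cases in hand, an arbitrary monomial $(\zeta-\xi)^\alpha u=(\zeta_1-\xi_1)\odot_{\scriptscriptstyle \xi}\cdots$ ($\alpha_1$ times) $\odot_{\scriptscriptstyle \xi}\cdots\odot_{\scriptscriptstyle \xi}(\zeta_m-\xi_m)$ ($\alpha_m$ times) $\odot_{\scriptscriptstyle \xi}\,u$ is a realization by finitely many applications of Proposition \ref{lemsumprod}(1), and a general Fueter polynomial is then a realization by finitely many applications of Proposition \ref{lemsumprod}(2).

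One point that needs a word of care is whether the $\odot_{\scriptscriptstyle \xi}$-product of Fueter monomials reproduces the correct symmetrized product $(\zeta-\xi)^\alpha$: recall from the definition of $\odot_{\scriptscriptstyle \xi}$ that $(\zeta-\xi)^\alpha u\odot_{\scriptscriptstyle \xi}(\zeta-\xi)^\beta v=(\zeta-\xi)^{\alpha+\beta}uv$, so iterating the degree-one factors indeed yields $(\zeta-\xi)^\alpha$ with the exponents adding, and the trailing constant $u$ is picked up correctly; no associativity subtlety of the symmetrized product $\times$ intrudes here because the bookkeeping is entirely inside the $\odot_{\scriptscriptstyle \xi}$ convolution, which is associative. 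The only genuinely nontrivial input is Proposition \ref{lemsumprod}, whose proof is asserted to parallel the classical linear-systems computation; granting that, the present proposition is essentially a composition-closure argument, and I do not expect a serious obstacle. If one wants a slightly more economical realization avoiding repeated products, one can alternatively realize $(\zeta-\xi)^\alpha u$ directly with a single companion-type pair $A_k$ (nilpotent, shifting along a chain of length $|\alpha|$) chosen so that $\left(I-\sum_k(\zeta_k-\xi_k)A_k\right)^{-\odot_{\scriptscriptstyle \xi}}=\sum_{j\ge0}\left(\sum_k(\zeta_k-\xi_k)A_k\right)^{\odot_{\scriptscriptstyle \xi} j}$ truncates at order $|\alpha|$; I would mention this but carry out the proof via the clean inductive route above.
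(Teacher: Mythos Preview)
Your proposal is correct and follows essentially the same route as the paper: reduce to realizing constants and the degree-one terms $(\zeta_k-\xi_k)$, then invoke Proposition~\ref{lemsumprod} to close under sums and $\odot_{\scriptscriptstyle\xi}$-products. The paper's version is marginally more compact in that it realizes $(\zeta_k-\xi_k)M$ in a single step (taking $C=M$, $B_j=\delta_{jk}I$, $A_j=D=0$) rather than realizing $(\zeta_k-\xi_k)$ and the constant $M$ separately and then multiplying, but this is a cosmetic difference; your added remarks on associativity of $\odot_{\scriptscriptstyle\xi}$ and the optional companion-form realization are sound and go slightly beyond what the paper records.
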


\begin{proof}
In view of Proposition \ref{lemsumprod}, it suffices to prove that constant terms and terms of the form $(\zeta_k-\xi_k)M$, $k\in\mathbb{Z}_m$, admit realizations. But this is clear. Indeed, a constant matrix $M$ corresponds to the realization $A_j=B=C=0$ and $D=M$, $j\in\mathbb{Z}_m$. Moreover, the function $(\zeta_k-\xi_k)M$ corresponds to $C=M$, $A_j=D=0$, $B_j=\delta_{jk}I_N$, $j\in\mathbb{Z}_m$.
\end{proof}

Thus the following theorem has been proved:
\begin{theorem}
Let $R$ be a function of Fueter variables analytic at the origin. Then, $R$ is rational if and only if it admits a realization given by \eqref{real123}.
\end{theorem}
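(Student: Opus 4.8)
The plan is to prove the two implications separately, with the nontrivial direction being that every rational function of Fueter variables analytic at the origin admits a realization of the form \eqref{real123}.

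\medskip

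\emph{The easy direction.} First I would observe that any function of the form \eqref{real123} is rational. Indeed, expanding the $\odot_{\scriptscriptstyle \xi}$-inverse as a geometric-type series gives a Fueter power series, and formally $R$ is the ratio, with respect to the $\odot_{\scriptscriptstyle \xi}$-product, of the Fueter polynomial $C\odot_{\scriptscriptstyle \xi}(\sum_k (\zeta_k-\xi_k)B_k)$ (together with the polynomial $D\odot_{\scriptscriptstyle \xi}(I-\sum_k(\zeta_k-\xi_k)A_k)$) and the Fueter polynomial $(I-\sum_k(\zeta_k-\xi_k)A_k)$. Since the $\odot_{\scriptscriptstyle \xi}$-inverse of a Fueter polynomial invertible at the center is well defined by \eqref{odot-inv}, and $R$ is a $\odot_{\scriptscriptstyle \xi}$-combination of such objects, $R$ is rational in the sense of being obtained from Fueter polynomials by the operations $+$, $\odot_{\scriptscriptstyle \xi}$ and $\odot_{\scriptscriptstyle \xi}$-inverse. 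This direction is essentially a remark and can be dispatched quickly.

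\medskip

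\emph{The main direction.} For the converse, the strategy is the standard linear-systems ``realization theory'' argument, now carried out with the $\odot_{\scriptscriptstyle \xi}$-product in place of ordinary multiplication, and it is exactly here that Propositions \ref{leminv}, \ref{lemsumprod} and \ref{polreal} have been set up. I would argue by structural induction on the way a rational function is built from Fueter polynomials:
\begin{itemize}
\item By Proposition \ref{polreal}, every Fueter polynomial admits a realization \eqref{real123}.
\item By parts $(1)$ and $(2)$ of Proposition \ref{lemsumprod}, the class of functions admitting a realization is closed under $\odot_{\scriptscriptstyle \xi}$-product and under addition.
\item By Proposition \ref{leminv}, if a function with realization \eqref{real123} has invertible constant term $D$ — which holds precisely when the function is invertible at $\xi$, equivalently analytic and $\odot_{\scriptscriptstyle \xi}$-invertible at the center — then its $\odot_{\scriptscriptstyle \xi}$-inverse again admits a realization.
\end{itemize}
Since, by definition, a rational function of Fueter variables analytic at $\xi$ is obtained from finitely many Fueter polynomials by finitely many applications of $+$, $\odot_{\scriptscriptstyle \xi}$ and $\odot_{\scriptscriptstyle \xi}$-inverse (the inverses being taken only at stages where the intermediate expression is analytic at $\xi$, hence has invertible constant term), an induction on the number of such operations yields a realization \eqref{real123} for $R$. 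Parts $(3)$ and $(4)$ of Proposition \ref{lemsumprod} handle the bookkeeping needed when one passes through matrix-valued intermediate expressions of larger size, so the induction stays within the class \eqref{real123}.

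\medskip

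\emph{Where the difficulty lies.} The computational heart of the matter — verifying that the proposed coupling matrices in Proposition \ref{lemsumprod} and the formula in Proposition \ref{leminv} actually reproduce the product, sum and inverse with respect to $\odot_{\scriptscriptstyle \xi}$ — has already been done (or stated as following the classical case). So the remaining obstacle is essentially conceptual rather than computational: one must be careful that all manipulations are legitimate for the $\odot_{\scriptscriptstyle \xi}$-product, which is noncommutative and, crucially, center-dependent, so the matrices $A_k, B_k, C, D$ and all intermediate realizations must be understood as attached to the fixed center $\xi$ throughout; and one must confirm that the inductive definition of ``rational function analytic at $\xi$'' only ever calls for $\odot_{\scriptscriptstyle \xi}$-inverses of expressions whose constant term is invertible, so that Proposition \ref{leminv} applies at each step. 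Granting these points, the theorem follows by combining the easy direction with the induction above. $\blacksquare$
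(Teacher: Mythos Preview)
Your proposal is correct and follows essentially the same approach as the paper: the easy direction is dispatched by noting that \eqref{real123} is trivially a $\odot_{\scriptscriptstyle \xi}$-ratio of Fueter polynomials, and the converse is obtained by structural induction using Propositions \ref{leminv}, \ref{lemsumprod} and \ref{polreal} exactly as you outline. The paper in fact states the theorem as an immediate consequence of those three propositions, without writing out the induction explicitly.
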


With the next theorem, we present two more characterizations of rational functions.

\begin{theorem}
Assume $R$ is a function of Fueter variables analytic at the origin. Then, $R$ is rational if and only if\\
$(1)$ its Taylor coefficients are given by
\begin{equation}
r_\alpha=\left\{\begin{array}{l l}
                  D, & if \ |\alpha|=0\\
                  \frac{(|\alpha|-1)!}{\alpha!} C\left(\sum_{k=1}^m \alpha_k A^{\alpha-\iota_k}B_k\right), & if \ |\alpha|\ge 1
\end{array}\right.,
\label{taylor-coeff}
\end{equation}
where $\alpha\in\mathbb{N}_0^m$ and $A^\alpha = A_1^{\alpha_1} \times \cdots \times A_m^{\alpha_m}$; \\

$(2)$ there exists a finite-dimensional resolvent-invariant space $\mathcal{G}$ such that Gleason's problem is solvable for every $f=R\eta\in\mathcal{G}$, where $\eta$ is a constant column matrix with entries in $\mathcal{A}$.
\end{theorem}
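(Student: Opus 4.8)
The plan is to prove the equivalence of rationality with conditions $(1)$ and $(2)$ by establishing the cycle: realization $\Rightarrow$ $(1)$ $\Rightarrow$ realization, and realization $\Leftrightarrow$ $(2)$, using the preceding theorem (which already gives realization $\Leftrightarrow$ rational). For the implication that a realization yields the Taylor coefficients \eqref{taylor-coeff}, I would simply expand the $\odot_\xi$-geometric series $\left(I-\sum_{k=1}^m(\zeta_k-\xi_k)A_k\right)^{-\odot_\xi} = \sum_{\beta\in\mathbb{N}_0^m}\frac{|\beta|!}{\beta!}(\zeta-\xi)^\beta A^\beta$ (valid because, after freezing $v_0=w_0$, the coefficients commute with the variables, so ordinary multinomial bookkeeping applies and the symmetrized product $A^\beta$ appears exactly as in the formula preceding Proposition~3.1's analogue), then multiply on the right by $\sum_{k=1}^m(\zeta_k-\xi_k)B_k$ using the definition of $\odot_\xi$, collect the coefficient of $(\zeta-\xi)^\alpha$, and match with \eqref{taylor-coeff}; the identity $\frac{|\beta|!}{\beta!}$ with $\beta=\alpha-\iota_k$ summed against $\alpha_k$ reproduces the stated $\frac{(|\alpha|-1)!}{\alpha!}$ factor after the routine combinatorial simplification $\frac{(|\alpha|-1)!}{(\alpha-\iota_k)!} = \frac{\alpha_k (|\alpha|-1)!}{\alpha!}$.

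For the converse direction within $(1)$ — that Taylor coefficients of the form \eqref{taylor-coeff} force a realization — I would read the formula backwards: given matrices $A_k, B_k, C, D$ producing those coefficients, one builds the function \eqref{real123} directly and invokes the previous theorem; the content is just that \eqref{taylor-coeff} is literally the coefficient list of \eqref{real123}, so a function whose Taylor expansion has this shape \emph{is} that realization's expansion, hence rational. The only subtlety to address is uniqueness of Taylor coefficients of a hyperholomorphic function (so that agreeing coefficients means agreeing functions on $\mathscr{U}_\xi(\sigma)$), which follows from the convergence theorem of Section~\ref{sec-series} together with \eqref{ps-coeff}.

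For $(2)$, I would use Remark~\ref{g-rational} and Proposition~\ref{resolvent} essentially verbatim. If $R$ is rational with realization \eqref{real123}, the span of the columns of $G(\zeta)=G(\xi)\odot_\xi(I-\sum_k(\zeta_k-\xi_k)A_k)^{-\odot_\xi}$ (adjoining the constant column for $D$ if needed) is a finite-dimensional resolvent-invariant space containing all $f=R\eta$, and Gleason's problem is solvable there with $g_k=GA_k\eta$, exactly as in the proof of Proposition~\ref{resolvent}. Conversely, if such a $\mathcal{G}$ exists, Proposition~\ref{resolvent} says the generating matrix $G$ has the form \eqref{g-span}, and then Remark~\ref{g-rational} exhibits $f$ — and hence $R$ — in the realization form \eqref{real123}, so $R$ is rational by the previous theorem.

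The main obstacle is the bookkeeping in part $(1)$: one must be careful that the expansion of the $\odot_\xi$-inverse really is the symmetrized-power series $\sum_\beta \frac{|\beta|!}{\beta!}(\zeta-\xi)^\beta A^\beta$ and not something order-dependent. The point — which deserves an explicit sentence — is that $\odot_\xi$ is \emph{defined} by freezing $v_0=w_0$, where the algebra-valued coefficients commute with the scalar variables $v_k-w_k$; thus computing $(I-\sum_k(v_k-w_k)A_k)^{-1}$ as an ordinary power series in commuting variables $v_k-w_k$ with \emph{non-commuting} coefficients $A_k$ produces precisely the multinomial sum with $A^\beta$ the symmetrized product (as recorded in Section~\ref{sec-poly}), and transporting back via $v_k-w_k \mapsto \zeta_k-\xi_k$ gives the claimed $\odot_\xi$-expansion. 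Once this is granted, everything else is routine matching of coefficients and citation of the earlier propositions.
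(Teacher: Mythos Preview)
Your proposal is correct and follows essentially the same route as the paper. For part~(1) the paper does exactly the expansion you describe, writing $R(\zeta)=D+\sum_\alpha\frac{|\alpha|!}{\alpha!}(\zeta-\xi)^\alpha CA^\alpha$ convolved with $\sum_k(\zeta_k-\xi_k)B_k$ and reading off the coefficients; for part~(2) the paper likewise invokes Remark~\ref{g-rational} for the direction $(2)\Rightarrow$ rational and, for the converse, exhibits the Gleason solutions $g_k=C\odot_\xi(I-\sum_j(\zeta_j-\xi_j)A_j)^{-\odot_\xi}B_k\eta$ inside the span of a $G$ of type~\eqref{g-span}. Your write-up is in fact more careful than the paper's on two points it leaves implicit: the justification of the symmetrized-power expansion of the $\odot_\xi$-inverse via the ``freeze $v_0=w_0$'' definition, and the need to adjoin the constant $D$-column so that $R\eta$ itself (not just $R\eta-D\eta$) lies in the resolvent-invariant space.
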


\begin{proof}
The proof of $(1)$ follows from direct computation since $R$ is rational if and only if
\begin{eqnarray*}
R(\zeta) & = & D+C\odot_{\scriptscriptstyle \xi} \left(I-\sum_{k=1}^m(\zeta_k-\xi_k) A_k\right)^{-\odot_{\scriptscriptstyle \xi}} \odot_{\scriptscriptstyle \xi} \left(\sum_{k=1}^m(\zeta_k-\xi_k) B_k\right) \\
         & = & D+\left(\sum_{\alpha\in\mathbb{N}_0^m}\frac{|\alpha|!}{\alpha!}(\zeta-\xi)^\alpha CA^\alpha\right)\odot_{\scriptscriptstyle \xi} \left(\sum_{k=1}^m(\zeta_k-\xi_k) B_k\right) \\
         & = & D+\sum_{k=1}^m\sum_{\substack{\alpha\in\mathbb{N}_0^m;\\ \alpha\ge\iota_k}}\frac{(|\alpha|-1)!}{\alpha!}(\zeta-\xi)^\alpha C \alpha_k A^{\alpha-\iota_k}B_k\\
         & = & \sum_{\alpha\in\mathbb{N}_0^m} (\zeta-\xi)^\alpha r_\alpha,
\end{eqnarray*}
where $r_\alpha$ is given by \eqref{taylor-coeff}.\smallskip

The proof for one direction of $(2)$ is already given in Remark \ref{g-rational}, where it was shown that every $f$ in a finite-dimensional $\mathcal{G}$ admits a realization. For the converse, we assume $R$ is rational, and there exists $\eta$ such that $f=R\eta$. Then,
\[
f(\zeta)-f(\xi) = C\odot_{\scriptscriptstyle \xi} \left(I-\sum_{k=1}^m(\zeta_k-\xi_k) A_k\right)^{-\odot_{\scriptscriptstyle \xi}}\odot_{\scriptscriptstyle \xi}\left(\sum_{k=1}^m(\zeta_k-\xi_k) B_k\right)\eta
\]
and, moreover,
\[
g_k = C\odot_{\scriptscriptstyle \xi} \left(I-\sum_{k=1}^m(\zeta_k-\xi_k) A_k\right)^{-\odot_{\scriptscriptstyle \xi}} B_k\eta
\]
solves Gleason's problem in a space $\mathcal{G}$ generated by the columns of a function of the type \eqref{g-span}.
\end{proof}

\section{Banach modules of Fueter series}
\setcounter{equation}{0}
\label{sec-banach}

Let $\mathbf c=(c_\alpha)_{\alpha\in\mathfrak{I}_0}$ be a family of non-null real numbers. The case where some of the coefficients $c_\alpha$ are zero is easily adapted.\smallskip

We, then, set
\begin{equation}
K_{\mathbf c}(\zeta,\xi)=\sum_{\alpha\in\mathbb N_0^m}\frac{\zeta^\alpha (\xi^\alpha)^\dagger}{c_\alpha},
\label{kernel}
\end{equation}
assuming that the set
\[
\Omega(\mathbf c)=\left\{\zeta\in\mathscr{H}^m \suchthat \sum_{\alpha\in\mathbb N_0^m} \frac{(N(\zeta))^{2\alpha}} {|c_\alpha|}<\infty\right\}
\]
is an open neighborhood of the origin in $\mathscr{H}^m$.\smallskip

Let $\stackrel{\circ}{\mathcal W}\hspace{-1mm}(\mathbf c)$ denote the module of functions $f(\zeta)=\sum_{\alpha\in\mathbb N_0^m}\zeta^\alpha f_\alpha$ with coefficients $f_\alpha\in\mathcal A$ such that
\begin{equation}
\label{ineq123456}
\|f\|\equiv\left(\sum_{\alpha\in\mathbb N_0^m}|c_\alpha| (N(f_\alpha))^2\right)^{1/2}<\infty.
\end{equation}

\begin{remark}
For convenience, we only consider power series centered at the origin hereby. However, one can easily reproduce the results presented in this and in the following sections for power series centered at a different point. Because of this choice, we simply write $\odot$ instead of $\odot_{\scriptscriptstyle 0}$, and $\mathcal{R}_k$ instead of $\mathcal{R}_k(0)$.
\end{remark}

\begin{proposition}
The formula \eqref{ineq123456} defines a norm in $\stackrel{\circ}{\mathcal W}\hspace{-1mm}(\mathbf c)$.
\end{proposition}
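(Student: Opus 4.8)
The plan is to verify the three defining properties of a norm for the expression
\[
\|f\|=\left(\sum_{\alpha\in\mathbb N_0^m}|c_\alpha|(N(f_\alpha))^2\right)^{1/2},
\]
where $f(\zeta)=\sum_{\alpha}\zeta^\alpha f_\alpha$ with $f_\alpha\in\mathcal{A}$. Since the coefficients $f_\alpha$ are uniquely determined by $f$ (a function determines its Taylor coefficients), it suffices to work with the sequence $(f_\alpha)_{\alpha}$, and the stated formula is essentially the $\ell^2$-norm of the sequence $\bigl(|c_\alpha|^{1/2}N(f_\alpha)\bigr)_\alpha$ of non-negative reals. So the statement reduces to checking that this weighted $\ell^2$-type quantity is a genuine norm on the module $\stackrel{\circ}{\mathcal W}\hspace{-1mm}(\mathbf c)$.

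First I would check positivity and definiteness: $\|f\|\ge 0$ is immediate, and $\|f\|=0$ forces $|c_\alpha|(N(f_\alpha))^2=0$ for every $\alpha$; since the $c_\alpha$ are non-null and $N$ is a norm on $\mathcal{A}$ (so $N(f_\alpha)=0\iff f_\alpha=0$), this gives $f_\alpha=0$ for all $\alpha$, i.e. $f\equiv 0$. Next, homogeneity: for a scalar $k\in\mathbb{K}$, the coefficients of $kf$ are $kf_\alpha$, and by \eqref{basic-norm-prop} we have $N(kf_\alpha)=|k|N(f_\alpha)$, so $\|kf\|=|k|\,\|f\|$; I should remark that scalar multiplication here is by elements of $\mathbb{K}$ (the module is over $\mathbb{K}$, or one restricts to central scalars), which is the relevant notion for a norm on a Banach module.

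The triangle inequality is the one genuinely substantive step, and the main obstacle is that $N$ itself need only be submultiplicative, not necessarily subadditive in an obviously compatible way — but since $N$ is a norm on $\mathcal{A}$ by hypothesis, we do have $N(f_\alpha+g_\alpha)\le N(f_\alpha)+N(g_\alpha)$. The argument is then: the coefficients of $f+g$ are $f_\alpha+g_\alpha$, so
\[
\|f+g\|=\left(\sum_\alpha |c_\alpha|\,N(f_\alpha+g_\alpha)^2\right)^{1/2}\le\left(\sum_\alpha |c_\alpha|\,\bigl(N(f_\alpha)+N(g_\alpha)\bigr)^2\right)^{1/2}.
\]
Now apply Minkowski's inequality in $\ell^2$ to the sequences $a_\alpha=|c_\alpha|^{1/2}N(f_\alpha)$ and $b_\alpha=|c_\alpha|^{1/2}N(g_\alpha)$, using monotonicity of $t\mapsto t^{1/2}$ together with $(a_\alpha+b_\alpha)^2$ being the square of the pointwise sum, to obtain
\[
\left(\sum_\alpha (a_\alpha+b_\alpha)^2\right)^{1/2}\le\left(\sum_\alpha a_\alpha^2\right)^{1/2}+\left(\sum_\alpha b_\alpha^2\right)^{1/2}=\|f\|+\|g\|.
\]
Finally I would note that $\stackrel{\circ}{\mathcal W}\hspace{-1mm}(\mathbf c)$ is closed under addition and scalar multiplication (so that the above makes sense, i.e. $\|f+g\|<\infty$ when $\|f\|,\|g\|<\infty$, which follows from the triangle inequality itself), confirming it is a normed module. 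I do not expect any deeper difficulty; the only care needed is to invoke the hypotheses on $N$ (norm on $\mathcal{A}$, $N(1)=1$, \eqref{basic-norm-prop}) explicitly rather than the weaker submultiplicativity \eqref{Nab}, and to be clear that uniqueness of the coefficient sequence is what lets us transfer the $\ell^2$ reasoning to the module of functions.
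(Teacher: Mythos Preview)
Your proposal is correct and follows essentially the same approach as the paper: verify definiteness using that the $c_\alpha$ are non-null and $N$ is a norm, obtain homogeneity from \eqref{basic-norm-prop}, and deduce the triangle inequality from $N(f_\alpha+g_\alpha)\le N(f_\alpha)+N(g_\alpha)$ together with the $\ell^2$ triangle inequality. The only cosmetic difference is that you cite Minkowski directly, whereas the paper expands $(N(f_\alpha)+N(g_\alpha))^2$ and bounds the cross term by Cauchy--Schwarz, which is just the standard proof of Minkowski.
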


\begin{proof}
First, observe that for every $a\in\mathcal{A}$,
\begin{equation}
\|af\| = \left(\sum_{\alpha\in\mathbb N_0^m}|c_\alpha| (N(af_\alpha))^2\right)^{1/2} \le N(a) \|f\|.
\end{equation}
However, if we restrict ourselves to $k\in\mathbb{K}\subset\mathcal{A}$,
\[
\|kf\| = |k| \|f\|,
\]
where we have used \eqref{basic-norm-prop}. Moreover,
\begin{eqnarray*}
\|f\| = 0 & \Rightarrow & \left(\sum_{\alpha\in\mathbb N_0^m}|c_\alpha| (N(f_\alpha))^2\right)^{1/2} = 0 \\
          & \Rightarrow & f_\alpha = 0, \forall \alpha\in\mathbb{N}_0^m \\
          & \Rightarrow & f=0.
\end{eqnarray*}
Finally, if $g(\zeta)=\sum_{\alpha\in\mathbb N_0^m}\zeta^\alpha g_\alpha$ belongs to $\stackrel{\circ}{\mathcal{W}}\hspace{-1mm}(\mathbf{c})$,
\begin{eqnarray*}
\|f+g\|^2 & = & \sum_{\alpha\in\mathbb N_0^m}|c_\alpha| (N(f_\alpha+g_\alpha))^2 \\
          & \le & \sum_{\alpha\in\mathbb N_0^m}|c_\alpha| \left[N(f_\alpha)+N(g_\alpha)\right]^2 \\
          & \le & \sum_{\alpha\in\mathbb N_0^m}|c_\alpha| \left[N(f_\alpha)^2+2N(f_\alpha)N(g_\alpha)+N(g_\alpha)^2\right] \\
          & \le & \|f\|^2 + 2\|f\|\cdot\|g\| + \|g\|^2 \\
          & \le & \left(\|f\|+\|g\|\right)^2.
\end{eqnarray*}
\end{proof}

The next result is to be compared, for instance, to the one presented for the split-quaternions in \cite{alss_IJM}.

\begin{proposition}
Let $f$ and $g$ be two elements of $\stackrel{\circ}{\mathcal{W}}\hspace{-1mm}(\mathbf c)$ -- with $g(\zeta)=\sum_{\alpha\in\mathbb N_0^m}\zeta^\alpha g_\alpha$.
Then, the Hermitian form
\begin{equation}
\label{innerprod}
[f,g]\equiv\sum_{\alpha\in\mathbb N_0^m}c_\alpha g_\alpha^\dagger f_\alpha
\end{equation}
converges in $\mathcal{A}$ and we have
\begin{equation}
N([f,g]) \le \|f\|\cdot\|g\|.
\end{equation}
\end{proposition}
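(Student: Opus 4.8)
The plan is to reduce everything to the Cauchy--Schwarz inequality for square-summable real families, together with the completeness of $\mathcal A$. First I would bound the norm of a generic term of the series: using the submultiplicativity \eqref{Nab}, the hypothesis $N(a)=N(a^\dagger)$, and \eqref{basic-norm-prop}, for each $\alpha\in\mathbb N_0^m$ one has
\[
N\!\left(c_\alpha g_\alpha^\dagger f_\alpha\right)=|c_\alpha|\,N\!\left(g_\alpha^\dagger f_\alpha\right)\le |c_\alpha|\,N(g_\alpha)\,N(f_\alpha)=\Bigl(\sqrt{|c_\alpha|}\,N(g_\alpha)\Bigr)\Bigl(\sqrt{|c_\alpha|}\,N(f_\alpha)\Bigr).
\]
Note this holds irrespective of the (possibly non-commutative) order of the factors $g_\alpha^\dagger$ and $f_\alpha$, since only submultiplicativity is used.

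Next I would sum over $\alpha$ and apply the classical Cauchy--Schwarz inequality to the two real families $\bigl(\sqrt{|c_\alpha|}\,N(f_\alpha)\bigr)_{\alpha\in\mathbb N_0^m}$ and $\bigl(\sqrt{|c_\alpha|}\,N(g_\alpha)\bigr)_{\alpha\in\mathbb N_0^m}$, which are square-summable precisely because $f,g\in\stackrel{\circ}{\mathcal W}\hspace{-1mm}(\mathbf c)$ by \eqref{ineq123456}. This yields
\[
\sum_{\alpha\in\mathbb N_0^m} N\!\left(c_\alpha g_\alpha^\dagger f_\alpha\right)\le\left(\sum_{\alpha\in\mathbb N_0^m}|c_\alpha|\,N(f_\alpha)^2\right)^{1/2}\left(\sum_{\alpha\in\mathbb N_0^m}|c_\alpha|\,N(g_\alpha)^2\right)^{1/2}=\|f\|\cdot\|g\|<\infty.
\]

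Consequently the series $\sum_{\alpha\in\mathbb N_0^m}c_\alpha g_\alpha^\dagger f_\alpha$ is absolutely convergent with respect to $N$; since $\mathcal A$ is a Banach algebra, hence complete, the series converges to an element $[f,g]\in\mathcal A$. Finally, applying the triangle inequality for $N$ to the partial sums and passing to the limit gives
\[
N\!\left([f,g]\right)\le\sum_{\alpha\in\mathbb N_0^m}N\!\left(c_\alpha g_\alpha^\dagger f_\alpha\right)\le\|f\|\cdot\|g\|,
\]
which is the asserted estimate. There is no genuine obstacle here; the only points requiring a little care are invoking the completeness of $\mathcal A$ to promote absolute convergence to convergence, and using that $N$ is an honest norm (so that the triangle inequality is available) rather than merely a submultiplicative gauge.
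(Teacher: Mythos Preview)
Your proof is correct and follows exactly the same route as the paper: bound each term by $|c_\alpha|N(f_\alpha)N(g_\alpha)$ via submultiplicativity and then apply the Cauchy--Schwarz inequality in $\ell^2(\mathbb N_0^m)$. If anything, your write-up is slightly more careful, since you explicitly invoke the completeness of $\mathcal A$ to pass from absolute convergence to convergence, a step the paper leaves implicit.
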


\begin{proof}
The proof follows directly from Cauchy-Schwartz:
\begin{eqnarray*}
N([f,g]) & \le & \sum_{\alpha\in\mathbb N_0^m}|c_\alpha| N(f_\alpha) N(g_\alpha) \\
         & \le & \left(\sum_{\alpha\in\mathbb N_0^m}|c_\alpha| N(f_\alpha)^2\right)^{1/2} \left(\sum_{\alpha\in\mathbb N_0^m}|c_\alpha| N(g_\alpha)^2\right)^{1/2} \\
         & \le & \|f\|\cdot\|g\|
\end{eqnarray*}
\end{proof}

\begin{remark}
If $\mathcal{A}$ does not have zero divisors, the results presented by Paschke in \cite{wp} can be applied to our study since the form satisfies the conditions to be what is defined as an $\mathcal{A}$-valued inner product. We, however, want to study a more generic scenario in this work and allow $\mathcal{A}$ to have zero divisors.
\end{remark}

\begin{proposition}
With the Hermitian form \eqref{innerprod}, $\stackrel{\circ}{\mathcal{W}}\hspace{-1mm}(\mathbf c)$ admits the reproducing kernel $K_\mathbf{c}$ given by \eqref{kernel}, i.e.,
\begin{equation}
\label{innerprod1}
[f(\cdot),K_{\mathbf c}(\cdot,\xi)b]=b^\dag f(\xi)
\end{equation}
for every $b\in\mathcal{A}$.
\end{proposition}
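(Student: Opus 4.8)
The plan is to verify the reproducing property \eqref{innerprod1} by direct computation using the series expansions and the defining formula \eqref{innerprod} for the Hermitian form. First I would expand the kernel section $K_{\mathbf c}(\cdot,\xi)b$ as a function of its first variable: by \eqref{kernel} we have
\[
K_{\mathbf c}(\zeta,\xi)b=\sum_{\alpha\in\mathbb N_0^m}\zeta^\alpha\,\frac{(\xi^\alpha)^\dagger b}{c_\alpha},
\]
so that, viewed as an element of $\stackrel{\circ}{\mathcal W}\hspace{-1mm}(\mathbf c)$ with respect to the variable denoted by $\cdot$, its $\alpha$-th coefficient is $g_\alpha=\frac{(\xi^\alpha)^\dagger b}{c_\alpha}\in\mathcal A$. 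Before doing this I would record that this function indeed lies in $\stackrel{\circ}{\mathcal W}\hspace{-1mm}(\mathbf c)$ whenever $\xi\in\Omega(\mathbf c)$, since $\sum_\alpha |c_\alpha|\,N(g_\alpha)^2=\sum_\alpha |c_\alpha|^{-1}N(\xi^\alpha)^2 N(b)^2<\infty$ by the definition of $\Omega(\mathbf c)$ together with $N((\xi^\alpha)^\dagger)=N(\xi^\alpha)$ and $N((\xi^\alpha)^\dagger b)\le N(\xi^\alpha)N(b)$; this also shows the pairing in \eqref{innerprod1} is well-defined by the previous proposition.

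Next I would substitute into \eqref{innerprod}. Writing $f(\zeta)=\sum_\alpha\zeta^\alpha f_\alpha$ and using $g_\alpha=\dfrac{(\xi^\alpha)^\dagger b}{c_\alpha}$, the definition \eqref{innerprod} gives
\[
[f(\cdot),K_{\mathbf c}(\cdot,\xi)b]=\sum_{\alpha\in\mathbb N_0^m}c_\alpha\,g_\alpha^\dagger f_\alpha
=\sum_{\alpha\in\mathbb N_0^m}c_\alpha\,\frac{1}{c_\alpha}\Big((\xi^\alpha)^\dagger b\Big)^\dagger f_\alpha
=\sum_{\alpha\in\mathbb N_0^m}b^\dagger\,\xi^\alpha f_\alpha,
\]
where I have used that $c_\alpha\in\mathbb R$ so it commutes and $c_\alpha/c_\alpha=1$, together with the anti-multiplicativity of $\dagger$ to get $\big((\xi^\alpha)^\dagger b\big)^\dagger=b^\dagger(\xi^\alpha)^{\dagger\dagger}=b^\dagger\xi^\alpha$. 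Here one needs that $\dagger$ is an involution, i.e. $(a^\dagger)^\dagger=a$; this is implicit in the axioms listed for $\dagger$ and I would note it explicitly. Pulling the constant $b^\dagger$ out of the (convergent) sum yields $b^\dagger\sum_\alpha\xi^\alpha f_\alpha=b^\dagger f(\xi)$, which is exactly \eqref{innerprod1}.

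The only genuinely delicate point, and the one I would treat with a little care rather than as routine, is the justification of interchanging the pairing with the infinite sum defining $K_{\mathbf c}(\cdot,\xi)b$ — equivalently, that \eqref{innerprod} may be applied termwise and the resulting series $\sum_\alpha b^\dagger\xi^\alpha f_\alpha$ rearranged and summed in $\mathcal A$. This is handled by the estimate $N([f,g])\le\|f\|\cdot\|g\|$ from the preceding proposition applied to partial sums: if $g^{(n)}$ denotes the truncation of $K_{\mathbf c}(\cdot,\xi)b$ to $|\alpha|\le n$, then $\|g-g^{(n)}\|\to 0$ by the convergence of $\sum_\alpha|c_\alpha|^{-1}N(\xi^\alpha)^2$, hence $[f,g^{(n)}]\to[f,g]$ in $\mathcal A$; and $[f,g^{(n)}]=\sum_{|\alpha|\le n}b^\dagger\xi^\alpha f_\alpha$ by finiteness, whose limit is $b^\dagger f(\xi)$ since the power series for $f$ converges absolutely at $\xi\in\Omega(\mathbf c)$ and $N(b^\dagger\,\cdot\,)\le N(b)N(\cdot)$. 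I expect no other obstacle; everything else is bookkeeping with the norm axioms \eqref{Nab}, \eqref{basic-norm-prop} and the hypothesis $N(a)=N(a^\dagger)$.
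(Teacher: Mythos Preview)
Your proof is correct and follows essentially the same direct computation as the paper's own proof, which simply substitutes the coefficients of $K_{\mathbf c}(\cdot,\xi)b$ into \eqref{innerprod} and simplifies. You add welcome rigor the paper omits (membership of the kernel section in $\stackrel{\circ}{\mathcal W}\hspace{-1mm}(\mathbf c)$, the involution identity $(a^\dagger)^\dagger=a$, and the convergence justification via partial sums), but the core argument is identical.
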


\begin{proof}
By definition,
\begin{eqnarray*}
[f(\cdot),K_{\mathbf c}(\cdot,\xi)b] & = & \sum_{\alpha\in\mathbb N_0^m}c_\alpha \left(b^\dag \frac{\xi^\alpha}{c_\alpha}\right) f_\alpha \\
             & = & b^\dag \sum_{\alpha\in\mathbb N_0^m} \xi^\alpha f_\alpha \\
             & = & b^\dag f(\xi).
\end{eqnarray*}
\end{proof}

{\it A priori}, the module $\stackrel{\circ}{\mathcal W}\hspace{-1mm}(\mathbf c)$ is not complete. By a general theorem on metric spaces, it has a completion to a Banach space, which is unique up to a metric space (Banach space?) isometry. In view of \eqref{innerprod1}, we now prove that $\stackrel{\circ}{\mathcal{W}}\hspace{-1mm}(\mathbf{c})$ has a uniquely defined completion which is a module of functions. The argument follows 
a known argument in the theory of reproducing kernel Hilbert spaces. As in that case, we consider the vector module of Cauchy sequences in $\stackrel{\circ}{\mathcal W}\hspace{-1mm}(\mathbf c)$, and say that two such sequences $(f_n)$ and $(g_n)$ are equivalent if
\[
\lim_{n\rightarrow \infty}\|f_n-g_n\|=0
\]
This is indeed an equivalence relation, and we denote by $\mathcal{CS}$ the quotient module. Still like the classical case, the formula
\begin{equation}
\label{norm12345}
\|\stackrel{\sim}{f}\|=\lim_{n\rightarrow\infty}\|f_n\|
\end{equation}
where $(f_n)$ belongs to the equivalence class $\stackrel{\sim}{f}$ does not depend on the given representative in the equivalence class, and defines a norm on $\mathcal{CS}$.
It now follows from \eqref{innerprod} and \eqref{innerprod1} that for $f\in\mathcal{CS}$ the limit
\begin{equation}
\label{lim345}
b^\dag f(w)\equiv\lim_{n\rightarrow\infty} b^\dag f_n(w)
\end{equation}
exists in the topology of $\mathcal A$ and does not depend on the given representative of the equivalence class. \smallskip

We denote by $\mathcal{W}(\mathbf{c})$ the representation of $\mathcal{CS}$ as a module of functions. We, then, have the following result:

\begin{theorem}
The space $\mathcal{W}(\mathbf{c})$ endowed with the norm \eqref{norm12345} is a Banach module, in which $\stackrel{\circ}{\mathcal{W}}\hspace{-1mm}(\mathbf{c})$ is naturally embedded in a dense way. Moreover, \eqref{innerprod1} holds in $\mathcal{W}(\mathbf{c})$.
\end{theorem}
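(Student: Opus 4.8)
The plan is to mimic the classical completion argument for reproducing kernel Hilbert spaces, being careful at each step to track the weaker module structure (zero divisors, non-central norm bound $N(af)\le N(a)\|f\|$). The three things to establish are: (i) $\mathcal{W}(\mathbf c)$ is a Banach module with norm \eqref{norm12345}; (ii) $\stackrel{\circ}{\mathcal W}\hspace{-1mm}(\mathbf c)$ embeds densely; (iii) the reproducing identity \eqref{innerprod1} persists in the completion.

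First I would make precise the representation of $\mathcal{CS}$ as functions. Given an equivalence class $\stackrel{\sim}{f}$ with representative $(f_n)$, I would show that for every $w\in\Omega(\mathbf c)$ and every $b\in\mathcal A$ the limit \eqref{lim345} exists and is independent of the representative — this is exactly where the reproducing property \eqref{innerprod1} of $\stackrel{\circ}{\mathcal W}\hspace{-1mm}(\mathbf c)$ is used, since $b^\dag f_n(w)=[f_n(\cdot),K_{\mathbf c}(\cdot,w)b]$ and $|N([f_n-f_k,K_{\mathbf c}(\cdot,w)b])|\le N(b)\,\|f_n-f_k\|\,\|K_{\mathbf c}(\cdot,w)b\|^{1/2}$ (or the analogous bound via the Cauchy–Schwarz inequality already proved), so $(b^\dag f_n(w))_n$ is Cauchy in $\mathcal A$. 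Taking $b=1$ gives a well-defined function $w\mapsto f(w)$; I would then define $\mathcal{W}(\mathbf c)$ to be the set of all such limit functions. The map $\stackrel{\sim}{f}\mapsto f$ is then shown to be injective: if $f(w)=0$ for all $w\in\Omega(\mathbf c)$, then by passing to Taylor coefficients (each $f_\alpha$ is recovered by a fixed finite-difference / differentiation functional, which is continuous on $\stackrel{\circ}{\mathcal W}\hspace{-1mm}(\mathbf c)$ by the same kernel estimate applied to derivatives of $K_{\mathbf c}$) one gets $f_\alpha=0$ for all $\alpha$, hence $\|f_n\|\to 0$, i.e. $\stackrel{\sim}{f}=0$. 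This identification lets us transport the norm \eqref{norm12345} and the obvious $\mathcal A$-module structure ($a\cdot\stackrel{\sim}{f}$ represented by $(af_n)$, well-defined because $\|af_n-af_k\|\le N(a)\|f_n-f_k\|$) onto $\mathcal{W}(\mathbf c)$; completeness is then automatic, since $\mathcal{CS}$ is by construction the metric-space completion of $\stackrel{\circ}{\mathcal W}\hspace{-1mm}(\mathbf c)$, and the density of $\stackrel{\circ}{\mathcal W}\hspace{-1mm}(\mathbf c)$ is the standard fact that a space is dense in its completion.

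Next I would verify that \eqref{norm12345} is genuinely a norm on the quotient and is submultiplicative-compatible with the module action: $\|a\cdot\stackrel{\sim}{f}\|=\lim\|af_n\|\le N(a)\lim\|f_n\|=N(a)\|\stackrel{\sim}{f}\|$, and $\|k\cdot\stackrel{\sim}{f}\|=|k|\,\|\stackrel{\sim}{f}\|$ for $k\in\mathbb K$, both inherited from the corresponding inequalities on $\stackrel{\circ}{\mathcal W}\hspace{-1mm}(\mathbf c)$ established in the earlier proposition; the triangle inequality and positivity pass to the limit likewise, and $\|\stackrel{\sim}{f}\|=0$ forces $\|f_n\|\to0$, hence $f\equiv0$ by the Taylor-coefficient argument above, so the norm is non-degenerate on $\mathcal{W}(\mathbf c)$. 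Finally, for \eqref{innerprod1} in $\mathcal{W}(\mathbf c)$: the Hermitian form $[\cdot,\cdot]$ extends by continuity to $\mathcal{W}(\mathbf c)\times\mathcal{W}(\mathbf c)$ because $N([f_n,g_n]-[f_k,g_k])\le \|f_n-f_k\|\,\|g_n\|+\|f_k\|\,\|g_n-g_k\|\to0$, so $[\stackrel{\sim}{f},\stackrel{\sim}{g}]:=\lim[f_n,g_n]$ is well-defined; applying this with $g_n\equiv K_{\mathbf c}(\cdot,\xi)b\in\stackrel{\circ}{\mathcal W}\hspace{-1mm}(\mathbf c)$ (a fixed element, assuming the convergence of $K_{\mathbf c}$ at $\xi$, i.e. $\xi\in\Omega(\mathbf c)$) gives $[f(\cdot),K_{\mathbf c}(\cdot,\xi)b]=\lim[f_n(\cdot),K_{\mathbf c}(\cdot,\xi)b]=\lim b^\dag f_n(\xi)=b^\dag f(\xi)$, where the last equality is \eqref{lim345}.

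The main obstacle — the one place where the argument is not a verbatim copy of the Hilbert-space case — is the identification step: showing that two inequivalent Cauchy sequences cannot give the same limit function, and conversely that the limit function determines the class. In a Hilbert space this is immediate from completeness plus the fact that point evaluations separate points; here one must instead exhibit explicitly that each Taylor coefficient functional $f\mapsto f_\alpha$ is continuous on $\stackrel{\circ}{\mathcal W}\hspace{-1mm}(\mathbf c)$ (equivalently, that $f_\alpha = c_\alpha^{-1}[f(\cdot),K_{\mathbf c}(\cdot,0)e_?]$-type expressions, or derivatives of the kernel, are bounded), and that these functionals jointly separate points of $\mathcal{CS}$ — and then argue that a function with all Taylor coefficients zero on the open set $\Omega(\mathbf c)$ is the zero function, so that $\mathcal{W}(\mathbf c)$ is a bona fide space of functions rather than a space of equivalence classes. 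The presence of zero divisors in $\mathcal A$ does not actually obstruct this, because the norm-level estimates used throughout only involve $N$ and the scalar modulus $|\cdot|$ on $\mathbb K$, never invertibility in $\mathcal A$.
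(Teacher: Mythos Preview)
Your proposal is correct and follows essentially the same route as the paper, which in fact provides only a brief sketch preceding the theorem (Cauchy sequences modulo null sequences, the norm formula \eqref{norm12345}, and the pointwise limit \eqref{lim345} via the reproducing property) rather than a formal proof. You have supplied the details the paper omits---in particular the injectivity of the map $\stackrel{\sim}{f}\mapsto f$ via continuity of the Taylor-coefficient functionals, and the continuous extension of the Hermitian form---which is precisely what is needed to turn the paper's outline into a complete argument.
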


\begin{remark}
If for certain algebra the form \eqref{innerprod} is such that $\left[f,f\right]\geq0$, i.e., it is a positive real number for every $f$ in $\mathcal{W}(\mathbf{c})$, then $\mathcal{W}(\mathbf{c})$ is a Hilbert module. If on the other hand $\left[f,f\right]$ is a real number for every $f\in\mathcal{W}(\mathbf{c})$ but it is also negative to some $f$, then $\mathcal{W}(\mathbf{c})$ is a Potryagin or a Krein module. For example, the Banach module of power series associated with modules with positive coefficients $c_\alpha$ when the algebra $\mathcal{A}$ is the quaternionic algebra is a Hilbert space; Pontryagin or Krein modules, on the other hand, are what appear in the case of the split quaternions. For information on Pontryagin and Krein spaces; see, e.g., \cite{azih,bognar,MR92m:47068}.
\end{remark}

Now, we want to start studying operators in $\mathcal{W}(\mathbf{c})$ and their adjoint with respect to the Hermitian form \eqref{innerprod}. First, we present the following preliminary result.

\begin{proposition}
Let $g\in\mathcal{W}(\mathbf{c})$ such that
\begin{equation}
\left[f,g\right] = 0
\label{0-element}
\end{equation}
for every $f\in\mathcal{W}(\mathbf{c})$. Then, $g=0$. Analogously, if for a fixed $g\in\mathcal{W}(\mathbf{c})$ expression \eqref{0-element} holds for every $f\in\mathcal{W}(\mathbf{c})$, then $f=0.$
\label{form-result}
\end{proposition}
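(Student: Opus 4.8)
The plan is to imitate the standard reproducing‑kernel Hilbert space argument: test the hypothesis \eqref{0-element} against the kernel functions $K_{\mathbf c}(\cdot,\xi)b$ and read off the values of $g$ (resp.\ $f$) from the reproducing identity \eqref{innerprod1}.

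First I would record that for every $\xi\in\Omega(\mathbf c)$ and every $b\in\mathcal A$ the function $\zeta\mapsto K_{\mathbf c}(\zeta,\xi)b$ actually lies in $\stackrel{\circ}{\mathcal{W}}(\mathbf c)$, hence in $\mathcal{W}(\mathbf c)$. Indeed its $\alpha$‑th Taylor coefficient is $(\xi^\alpha)^\dagger b/c_\alpha$, so using $N((\xi^\alpha)^\dagger)=N(\xi^\alpha)$ and \eqref{Nab},
\[
\|K_{\mathbf c}(\cdot,\xi)b\|^2=\sum_{\alpha\in\mathbb N_0^m}\frac{N\big((\xi^\alpha)^\dagger b\big)^2}{|c_\alpha|}\le N(b)^2\sum_{\alpha\in\mathbb N_0^m}\frac{(N(\xi))^{2\alpha}}{|c_\alpha|}<\infty,
\]
precisely because $\xi\in\Omega(\mathbf c)$. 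A short computation, identical in structure to the proof of \eqref{innerprod1} and using that each $c_\alpha$ is real and that $\dagger$ is an anti‑homomorphic involution, gives for $g\in\stackrel{\circ}{\mathcal{W}}(\mathbf c)$ the companion reproducing identity $[K_{\mathbf c}(\cdot,\xi)b,g]=g(\xi)^\dagger b$; by density of $\stackrel{\circ}{\mathcal{W}}(\mathbf c)$, continuity of $\dagger$ (from $N(a^\dagger)=N(a)$) and of the form (from $N([f,g])\le\|f\|\,\|g\|$), and the defining relation \eqref{lim345}, this identity persists on $\mathcal{W}(\mathbf c)$.

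Now I would prove the first assertion. Fixing $\xi\in\Omega(\mathbf c)$ and $b\in\mathcal A$ and taking $f=K_{\mathbf c}(\cdot,\xi)b$ in \eqref{0-element} gives $g(\xi)^\dagger b=0$ for all $b\in\mathcal A$; choosing $b=1$ yields $g(\xi)=0$ for every $\xi\in\Omega(\mathbf c)$. Since $\Omega(\mathbf c)$ is an open neighbourhood of the origin and $\mathcal{W}(\mathbf c)$ is faithfully represented as a module of functions, this forces $g=0$; equivalently, a convergent power series vanishing near the origin has all its coefficients zero, so $\|g\|=0$ by \eqref{norm12345}. For the companion statement — if $[f,g]=0$ for a fixed $f$ and all $g\in\mathcal{W}(\mathbf c)$, then $f=0$ — I would instead set $g=K_{\mathbf c}(\cdot,\xi)b$ and apply \eqref{innerprod1} directly: $0=[f,K_{\mathbf c}(\cdot,\xi)b]=b^\dagger f(\xi)$ for all $b$, so with $b=1$ we get $f(\xi)=0$ on $\Omega(\mathbf c)$, hence $f=0$.

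The only genuinely non‑routine point is the bookkeeping for the completion: one must make sure the elementary bound above and the (companion) reproducing identity survive the passage from $\stackrel{\circ}{\mathcal{W}}(\mathbf c)$ to $\mathcal{W}(\mathbf c)$, and that a function in $\mathcal{W}(\mathbf c)$ vanishing on the open set $\Omega(\mathbf c)$ really is the zero element — all of which follow from density, the continuity estimate $N([f,g])\le\|f\|\,\|g\|$, and the identity theorem for power series. As an alternative that sidesteps the kernel altogether, one may simply test with the monomials $f=\zeta^\beta\in\stackrel{\circ}{\mathcal{W}}(\mathbf c)$, for which $[\zeta^\beta,g]=c_\beta g_\beta^\dagger$; since $c_\beta\neq0$ is real this gives $g_\beta=0$ for every $\beta$, hence $g=0$, and symmetrically $f=0$ in the companion case.
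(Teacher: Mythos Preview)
Your proposal is correct. In fact, the ``alternative that sidesteps the kernel'' you mention in your final paragraph is precisely the paper's proof: test with the monomials $f=\zeta^\alpha$ to obtain $[\zeta^\alpha,g]=c_\alpha g_\alpha^\dagger$, conclude $g_\alpha=0$ since $c_\alpha\neq 0$, and for the companion statement invoke $[f,g]=([g,f])^\dagger$.

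Your main kernel-based argument is a genuinely different route. It is also valid, and conceptually natural in a reproducing-kernel setting, but it is considerably longer: you need the companion reproducing identity $[K_{\mathbf c}(\cdot,\xi)b,g]=g(\xi)^\dagger b$, its extension to the completion via density and the continuity bound $N([f,g])\le\|f\|\,\|g\|$, and then the identity theorem for power series to pass from $g\equiv 0$ on $\Omega(\mathbf c)$ to $g=0$ as an element of $\mathcal{W}(\mathbf c)$. The monomial test avoids all of this bookkeeping because the monomials already lie in $\stackrel{\circ}{\mathcal{W}}\hspace{-1mm}(\mathbf c)$ and pick off the coefficients directly. What the kernel approach buys you is robustness: it would work in any reproducing-kernel module even without an explicit orthogonal basis of monomials, whereas the paper's argument relies on the specific structure of $\mathcal{W}(\mathbf c)$ as a space of power series with the diagonal form \eqref{innerprod}.
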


\begin{proof}
Note that for every $\alpha\in\mathbb{N}_0^m$
\[
\left[\zeta^\alpha,g\right] = c_\alpha g_\alpha^\dag.
\]
Because $c_\alpha\in\mathbb{R}$ is a positive number, expression \eqref{0-element} implies that $g_\alpha = 0$. Then, $g=0$.\smallskip

This also proves the analogous result. Just observe that
\[
\left[f,g\right] = \left(\left[g,f\right]\right)^\dag.
\]
\end{proof}

The uniqueness of the adjoint of operators $O$ in $\mathcal{W}(\mathbf{c})$ follows directly form the above result, as we present next.

\begin{proposition}
Let $O$ be an operator in $\mathcal{W}(\mathbf{c})$ and assume that it admits an adjoint, i.e., there exists $A$ which is characterized by
\[
\left[Af,g\right] \equiv \left[f,Og\right]
\]
for every $f,g\in\mathcal{W}(\mathbf{c})$ for which the term on the right-hand side converges. Then, $A$ is unique, and we denote $A=O^*$.
\label{uniqueness}
\end{proposition}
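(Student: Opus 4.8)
The plan is to prove uniqueness of the adjoint in the standard operator-theoretic way, using the non-degeneracy of the Hermitian form established in Proposition \ref{form-result}. Suppose $A_1$ and $A_2$ are both adjoints of $O$, so that $[A_1 f, g] = [f, Og] = [A_2 f, g]$ for every $f, g \in \mathcal{W}(\mathbf{c})$ for which the right-hand side converges. Subtracting, bilinearity of the form in its first slot gives $[(A_1 - A_2)f, g] = 0$ for all such $g$. The goal is then to conclude $(A_1 - A_2)f = 0$ for every $f$, hence $A_1 = A_2$.

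First I would fix $f$ and set $h = (A_1 - A_2)f \in \mathcal{W}(\mathbf{c})$, so that $[h, g] = 0$ for every $g$ in the relevant class. By the second assertion of Proposition \ref{form-result} (with the roles of $f$ and $g$ as stated there, using that $[f,g] = ([g,f])^\dag$ and $N(a) = N(a^\dag)$), the identity $[h,g]=0$ for all $g$ forces $h = 0$. Concretely, one can also argue directly: testing against $g = \zeta^\alpha$ gives $[h, \zeta^\alpha] = c_\alpha h_\alpha^\dag = 0$, and since each $c_\alpha$ is a nonzero real number, $h_\alpha^\dag = 0$, hence $h_\alpha = 0$ for every $\alpha \in \mathbb{N}_0^m$, so $h = 0$. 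Since $f$ was arbitrary, $A_1 f = A_2 f$ for all $f$, i.e.\ $A_1 = A_2$, and we write $A = O^*$.

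The only subtlety — and the point I would be careful about — is the domain/convergence caveat in the statement: the defining relation is required to hold only for those $f, g$ for which $[f, Og]$ converges. One must check that the monomials $\zeta^\alpha$ (or a dense enough family) lie in this class, so that the test vectors used above are legitimate; this is where the structure of $\mathcal{W}(\mathbf{c})$ and the density of $\stackrel{\circ}{\mathcal W}\hspace{-1mm}(\mathbf c)$ come in. Granting that, the argument is entirely formal and short. I do not anticipate a genuine obstacle here — the content is really just non-degeneracy of the form, already in hand — so the proof is a brief deduction rather than a construction.
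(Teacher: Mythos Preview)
Your argument is correct and matches the paper's proof essentially line for line: assume two adjoints $A_1,A_2$, subtract to get $[(A_1-A_2)f,g]=0$ for all $f,g$, and invoke Proposition~\ref{form-result} to conclude $(A_1-A_2)f=0$. The paper does not address the convergence caveat you flag either, so your treatment is if anything slightly more careful.
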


\begin{proof}
Suppose $O$ admits two adjoints, say $A_1$ and $A_2$. Then, note that for every $f,g\in\mathcal{W}(\mathbf{c})$
\[
\left[A_1f-A_2f,g\right] = \left[A_1f,g\right] - \left[A_2f,g\right] = \left[f,Og\right] - \left[f,Og\right] = 0.
\]
Therefore, by Proposition \ref{form-result}, we conclude that
\[
A_1f-A_2f = 0
\]
for every $f\in\mathcal{W}(\mathbf{c})$, which implies that $A_1=A_2$.
\end{proof}

Now, we present at least one condition for the existence of the adjoint of a certain operator $O$. To do so, we first need to introduce some definitions and to explore more the structure of the module $\mathcal{W}(\mathbf{c})$.\smallskip

First, observe that
\[
\left[f,g\right] = \sum_{\alpha\in\mathbb{N}_0^m} c_\alpha g_\alpha^\dag f_\alpha = \sum_{\ell=0}^m \left(\sum_{\alpha\in\mathbb{N}_0^m}c_\alpha g_\alpha^\dag\chi_\ell f_\alpha\right)e_\ell,
\]
where we have used the characteristic operators of the algebra $\mathcal{A}$ defined in \eqref{chi-def}. We, then, identify $\mathbb{K}$-valued inner product structures in certain spaces -- denoted by $\mathcal{K}_\ell(\mathbf{c})$ -- and define
\begin{equation}
\left\langle f,g\right\rangle_{\mathcal{K}_\ell(\mathbf{c})} \equiv \sum_{\alpha\in\mathbb{N}_0^m}c_\alpha g_\alpha^\dag\chi_\ell f_\alpha.
\label{k-inner-product}
\end{equation}
Observe that each $\mathcal{K}_\ell(\mathbf{c})$ constitutes, in general, a Krein space, which includes, in particular, Pontryagin and Hilbert spaces. 
What determines if it ends up being a Hilbert space or a Krein space is the coefficients $c_\alpha$ together with the characteristic operator $\chi_\ell$.\smallskip

Now, let us define a contractive operator. First, we introduce the notion of positivity in $\mathcal{A}$. Such a notion can be naturally given by stating that the quadratic form $aa^\dag$ is non-negative for every $a\in\mathcal{A}$ and writing
\[
aa^\dag \succeq 0.
\]
Such a definition does not always imply real positivity even if $aa^\dag$ is a real number. For example, it can be a negative real number in the setting of the split-quaternions.\smallskip

\begin{definition}
An operator $O$ in $\mathcal{W}(\mathbf{c})$ is said to be a contraction if
\[
\left[Of,Of\right] \preceq \left[f,f\right]
\]
for every $f\in\mathcal{W}(\mathbf{c})$.
\label{def-cont-op}
\end{definition}

\begin{proposition}
If the operator $O$ is bounded in every $\mathcal{K}_\ell(\mathbf{c})$, then it admits an adjoint in $\mathcal{W}(\mathbf{c})$.
\end{proposition}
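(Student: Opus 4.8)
The plan is to construct the adjoint $O^{*}$ by prescribing its Taylor coefficients, and then to use the hypothesis at the single point where a bare Banach estimate fails. Two elementary identities drive the argument: for $h=\sum_{\alpha\in\mathbb N_0^m}\zeta^{\alpha}h_{\alpha}\in\mathcal W(\mathbf c)$ and $\beta\in\mathbb N_0^m$, formula \eqref{innerprod} gives at once
\[
[h,\zeta^{\beta}]=c_{\beta}h_{\beta},
\]
and reading off the $e_\ell$-component yields $\langle h,\zeta^{\beta}\rangle_{\mathcal K_\ell(\mathbf c)}=c_{\beta}\bigl(h_{\beta}\bigr)_{\ell}$, where $(h_\beta)_\ell\in\mathbb K$ denotes the $\ell$-th coordinate of $h_\beta$ under $\mathcal A\simeq\mathbb K^{m+1}$. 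Imposing the defining relation $[O^{*}f,\zeta^{\beta}]=[f,O\zeta^{\beta}]$ and using the first identity, I would observe there is no freedom in the choice of $O^{*}$: one must put
\[
(O^{*}f)_{\beta}:=\frac1{c_{\beta}}\,[f,O\zeta^{\beta}]\qquad(\beta\in\mathbb N_0^m),\qquad O^{*}f:=\sum_{\beta\in\mathbb N_0^m}\zeta^{\beta}(O^{*}f)_{\beta}.
\]
Uniqueness is then automatic from Proposition \ref{uniqueness}, so only two points remain: (a) that $O^{*}f\in\mathcal W(\mathbf c)$ for every $f\in\mathcal W(\mathbf c)$; and (b) that $[O^{*}f,g]=[f,Og]$ holds for all $g\in\mathcal W(\mathbf c)$, not merely for monomials.

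Step (a) is the core, and it is here that the hypothesis enters. A direct Cauchy--Schwarz estimate $N([f,O\zeta^{\beta}])\le\|f\|\,\|O\|\,\|\zeta^{\beta}\|$, combined with $\|\zeta^{\beta}\|^{2}=|c_{\beta}|$, only yields $\sum_{\beta}|c_\beta|\,N((O^{*}f)_\beta)^2\le\|O\|^{2}\|f\|^{2}\sum_{\beta}1=\infty$, hence is useless. Instead, since $O$ is bounded on each Krein space $\mathcal K_\ell(\mathbf c)$, standard Krein-space theory (the adjoint obtained through a fundamental symmetry; see \cite{bognar,azih}) furnishes a bounded operator $O^{[\ast]_{\ell}}$ on $\mathcal K_\ell(\mathbf c)$ with $\langle f,O\zeta^{\beta}\rangle_{\mathcal K_\ell(\mathbf c)}=\langle O^{[\ast]_{\ell}}f,\zeta^{\beta}\rangle_{\mathcal K_\ell(\mathbf c)}=c_{\beta}\bigl((O^{[\ast]_{\ell}}f)_{\beta}\bigr)_{\ell}$. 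Assembling the $e_\ell$-components of $[f,O\zeta^{\beta}]$ then gives
\[
(O^{*}f)_{\beta}=\sum_{\ell=0}^{m}\bigl((O^{[\ast]_{\ell}}f)_{\beta}\bigr)_{\ell}\,e_{\ell},
\]
so $N\bigl((O^{*}f)_{\beta}\bigr)\le C\,\max_{0\le\ell\le m}N\bigl((O^{[\ast]_{\ell}}f)_{\beta}\bigr)$ with $C$ depending only on $\mathcal A$ (the coordinate maps $a\mapsto a_{\ell}e_{\ell}$ are bounded on the finite-dimensional algebra $\mathcal A$). Summing against $|c_{\beta}|$,
\[
\|O^{*}f\|^{2}=\sum_{\beta\in\mathbb N_0^m}|c_{\beta}|\,N\bigl((O^{*}f)_{\beta}\bigr)^{2}\le C^{2}\sum_{\ell=0}^{m}\sum_{\beta\in\mathbb N_0^m}|c_{\beta}|\,N\bigl((O^{[\ast]_{\ell}}f)_{\beta}\bigr)^{2}=C^{2}\sum_{\ell=0}^{m}\bigl\|O^{[\ast]_{\ell}}f\bigr\|^{2},
\]
which is finite since each $O^{[\ast]_{\ell}}f$ is a bona fide element of $\mathcal K_\ell(\mathbf c)\subset\mathcal W(\mathbf c)$ and the sum over $\ell$ is finite. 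This settles (a).

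For (b) I would argue by density. The relation $[O^{*}f,\zeta^{\beta}]=[f,O\zeta^{\beta}]$ holds for every $\beta$ by construction, hence $[O^{*}f,g]=[f,Og]$ for every polynomial $g\in\stackrel{\circ}{\mathcal W}\hspace{-1mm}(\mathbf c)$ by linearity of $O$ and of the form. For fixed $f$ both sides depend continuously on $g$ --- the left via $N([O^{*}f,g])\le\|O^{*}f\|\,\|g\|$, the right via $N([f,Og])\le\|f\|\,\|O\|\,\|g\|$, $O$ being bounded on $\mathcal W(\mathbf c)$ exactly because it is bounded on every $\mathcal K_\ell(\mathbf c)$ --- and $\stackrel{\circ}{\mathcal W}\hspace{-1mm}(\mathbf c)$ is dense in $\mathcal W(\mathbf c)$, so the identity propagates to all $g\in\mathcal W(\mathbf c)$; thus $O^{*}$ is the desired adjoint. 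The genuinely delicate step --- the one I expect to be the main obstacle --- is (a): one must notice that the coefficientwise estimates diverge unless one routes them through the Krein adjoints $O^{[\ast]_{\ell}}$, whose existence is precisely what the hypothesis ``$O$ bounded on every $\mathcal K_\ell(\mathbf c)$'' is there to supply; there is also the subsidiary point of checking that the Krein topology on each $\mathcal K_\ell(\mathbf c)$ is the one induced by $\|\cdot\|$, so that the norms $\|O^{[\ast]_{\ell}}f\|$ are finite.
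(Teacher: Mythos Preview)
Your proof is correct and rests on the same key idea as the paper's: route through the Krein adjoints in each component space $\mathcal K_\ell(\mathbf c)$ (the paper calls them $A_\ell$, you call them $O^{[\ast]_\ell}$) and then reassemble. The execution differs. The paper works purely at the operator level: it rewrites $\langle Of,g\rangle_{\mathcal K_\ell(\mathbf c)}=[\chi_\ell Of,g]$, passes to $[f,\chi_\ell^{*}A_\ell g]$, sums over $\ell$, and reads off the closed formula $O^{*}=\sum_{\ell}e_\ell^{\dag}\chi_\ell^{*}A_\ell$, without discussing whether $O^{*}f$ lands in $\mathcal W(\mathbf c)$ or why the adjoint identity extends beyond monomials. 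You instead build $O^{*}f$ coefficient by coefficient via $(O^{*}f)_\beta=c_\beta^{-1}[f,O\zeta^{\beta}]$, then use the Krein adjoints only to secure the norm estimate in step (a), and close with a density argument. Your route is longer but more analytically complete; the paper's is a compact algebraic manipulation that leaves the convergence issues you address in (a) and (b) implicit. The subsidiary point you flag at the end---that the Krein topology on each $\mathcal K_\ell(\mathbf c)$ coincides with the one induced by $\|\cdot\|$---is likewise left tacit in the paper.
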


\begin{proof}
Since $O$ is bounded in every $\mathcal{K}_\ell(\mathbf{c})$, then in each of such spaces it admits an adjoint, i.e., there exists an operator $A_\ell$ such that
\begin{equation}
\left\langle Of,g\right\rangle_{\mathcal{K}_\ell(\mathbf{c})} = \left\langle f,A_\ell g\right\rangle_{\mathcal{K}_\ell(\mathbf{c})}.
\label{adj-rel}
\end{equation}
Let us now show that the existence of such an adjoint in every $\mathcal{K}_\ell(\mathbf{c})$ implies the existence of an adjoint in $\mathcal{W}(\mathbf{c})$. First, observe that
\[
\left\langle Of,g\right\rangle_{\mathcal{K}_\ell(\mathbf{c})} = \left[\chi_\ell Of,g\right]
\]
and
\[
\left\langle f,A_\ell g\right\rangle_{\mathcal{K}_\ell(\mathbf{c})} = \left[\chi_\ell f,A_\ell g\right].
\]
The above two expressions together with \eqref{adj-rel} lead to
\[
\left[\chi_\ell Of,g\right] = \left[f,\chi_\ell^*A_\ell g\right],
\]
which, in turn, implies that
\[
\left[Of,g\right] = \left[f,\left(\sum_{k=0}^m e_\ell^\dag\chi_\ell^*A_\ell\right) g\right].
\]
Then, using the definition of the adjoint operator, we conclude that
\[
O^* = \sum_{k=0}^m e_\ell^\dag\chi_\ell^*A_\ell.
\]
\end{proof}

An operator that plays an important role in spaces of power series is the multiplication operator. We, then, define the analogous of this operator in our setting.\smallskip

Let $\mathcal{M}_{\zeta_k}$, $k\in\mathbb{Z}_m$, denote the $\odot$-multiplication operator by $\zeta_k$, i.e., if $f$ belongs to $\mathcal{W}(\mathbf{c})$, then
\begin{equation}
\mathcal{M}_{\zeta_k}f = \zeta_k \odot f.
\label{multiplication}
\end{equation}
Such an operator is further explored in particular examples of Banach modules of Fueter series in the next sections.\smallskip

Before, we generalize it in the following way: let $s$ be a Fueter series which belongs to some Banach module of Fueter series. Moreover, let $\mathcal{W}(\mathbf{c})$ and $\mathcal{W}(\mathbf{d})$ be two Banach modules of Fueter series. Then, the multiplier $\mathcal{M}_s: \mathcal{W}(\mathbf{c}) \rightarrow \mathcal{W}(\mathbf{d})$ is defined as
\[
\mathcal{M}_s f = s \odot f,
\]
where $f\in\mathcal{W}(\mathbf{c})$ and $g=s\odot f\in\mathcal{W}(\mathbf{d})$.

\begin{proposition}
The following holds in $\mathcal{W}(\mathbf{c})$:
\[
\left[\mathcal{M}^*_s K_\mathbf{d}(\cdot,\xi)b_1,K_\mathbf{c}(\cdot,\zeta)b_2\right]_{\mathcal{W}(\mathbf{c})} = b_2^\dag \left(\sum_{\alpha\in\mathbb{N}_0^m} \frac{(\zeta^\alpha \odot s(\zeta))\xi^\dag}{c_\alpha}\right) b_1,
\]
where $b_1,b_2\in\mathcal{A}$.
\end{proposition}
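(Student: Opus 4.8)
The plan is to derive the identity from two facts already available in the excerpt: the reproducing property \eqref{innerprod1} of $K_{\mathbf c}$ in $\mathcal{W}(\mathbf c)$ (and its analogue for $K_{\mathbf d}$ in $\mathcal{W}(\mathbf d)$), and the defining relation of the adjoint from Proposition~\ref{uniqueness}, which here reads $\left[\mathcal{M}_s^* f,g\right]_{\mathcal{W}(\mathbf c)}=\left[f,\mathcal{M}_s g\right]_{\mathcal{W}(\mathbf d)}$ for $f\in\mathcal{W}(\mathbf d)$ and $g\in\mathcal{W}(\mathbf c)$, used together with the elementary symmetry $[a,b]=[b,a]^\dag$ of the Hermitian form \eqref{innerprod} (valid because the $c_\alpha$ are real). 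One also uses that $K_{\mathbf c}(\cdot,\zeta)b_2\in\mathcal{W}(\mathbf c)$, so that by the definition of the multiplier $\mathcal{M}_s\bigl(K_{\mathbf c}(\cdot,\zeta)b_2\bigr)=s\odot\bigl(K_{\mathbf c}(\cdot,\zeta)b_2\bigr)$.

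First I would move $\mathcal{M}_s$ off the first slot and then apply the reproducing kernel of $\mathcal{W}(\mathbf d)$:
\[
\left[\mathcal{M}_s^* K_{\mathbf d}(\cdot,\xi)b_1,\ K_{\mathbf c}(\cdot,\zeta)b_2\right]_{\mathcal{W}(\mathbf c)}
=\left[K_{\mathbf d}(\cdot,\xi)b_1,\ s\odot\bigl(K_{\mathbf c}(\cdot,\zeta)b_2\bigr)\right]_{\mathcal{W}(\mathbf d)}
=\bigl(s\odot(K_{\mathbf c}(\cdot,\zeta)b_2)\bigr)(\xi)^\dag\,b_1,
\]
the last equality being $[a,b]=[b,a]^\dag$ followed by \eqref{innerprod1} in $\mathcal{W}(\mathbf d)$.

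It then remains to identify the Fueter series $s\odot\bigl(K_{\mathbf c}(\cdot,\zeta)b_2\bigr)$. Writing $s=\sum_\beta(\cdot)^\beta s_\beta$ and, by \eqref{kernel}, $K_{\mathbf c}(\cdot,\zeta)b_2=\sum_\alpha(\cdot)^\alpha\dfrac{(\zeta^\alpha)^\dag b_2}{c_\alpha}$, the definition of the $\odot$-product (equivalently the coefficient convolution \eqref{coef-conv}) and a reindexing of the double sum give
\[
s\odot\bigl(K_{\mathbf c}(\cdot,\zeta)b_2\bigr)=\sum_{\alpha\in\mathbb N_0^m}\bigl((\cdot)^\alpha\odot s\bigr)\,\frac{(\zeta^\alpha)^\dag b_2}{c_\alpha},
\]
since $(\cdot)^\alpha\odot s=\sum_\nu(\cdot)^{\alpha+\nu}s_\nu$ is exactly the $\odot$-multiplication of $s$ by the monomial $(\cdot)^\alpha$. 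Evaluating at $\xi$, substituting into the display above, taking $\dag$ (an anti-homomorphism), and using that the $c_\alpha$ are real and $(\zeta^\alpha)^{\dag\dag}=\zeta^\alpha$, one pulls $b_1$ and $b_2$ to the two ends and reads off the right-hand side of the statement. All these rearrangements and the termwise evaluation are legitimate because everything in sight converges absolutely on $\Omega(\mathbf c)$, by the convergence results of Section~\ref{sec-series} and the Cauchy--Schwarz bound attached to \eqref{innerprod}.

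The main obstacle is purely the non-commutative bookkeeping. One must keep the order of factors straight in the $\odot$-product \eqref{coef-conv} (the coefficients of the left factor $s$ remain on the left), in the involution (which reverses products), and in the evaluation of the kernel; in particular one has to recall that ``$\zeta^\alpha\odot s(\zeta)$'' denotes the $\odot$-product -- that is, the value of the multiplication operator $\mathcal{M}_{\zeta^\alpha}$ applied to $s$ -- and not the pointwise product $\zeta^\alpha s(\zeta)$, because the Fueter monomials are symmetrized products and $\zeta^\alpha\zeta^\beta\neq\zeta^{\alpha+\beta}$ in general.
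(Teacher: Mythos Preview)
Your proposal is correct and follows essentially the same route as the paper's proof: use the adjoint relation to move $\mathcal{M}_s$ to the second slot, apply the symmetry $[a,b]=[b,a]^\dag$ together with the reproducing property of $K_{\mathbf d}$ in $\mathcal{W}(\mathbf d)$, and then expand $s\odot\bigl(K_{\mathbf c}(\cdot,\zeta)b_2\bigr)$. Your reindexing $s\odot K_{\mathbf c}(\cdot,\zeta)b_2=\sum_\alpha\bigl((\cdot)^\alpha\odot s\bigr)\tfrac{(\zeta^\alpha)^\dag b_2}{c_\alpha}$ is exactly the content of the paper's step that passes to the sum over $\alpha$, only spelled out in more detail.
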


\begin{proof}
This follows from a simple computation since
\begin{eqnarray*}
\left[\mathcal{M}^*_s K_\mathbf{d}(\cdot,\xi)b_1,K_\mathbf{c}(\cdot,\zeta)b_2\right]_{\mathcal{W}(\mathbf{c})} & = & \left[K_\mathbf{d}(\cdot,\xi)b_1,\mathcal{M}_s K_\mathbf{c}(\cdot,\zeta)b_2\right]_{\mathcal{W}(\mathbf{d})}\\
              & = & \left(\left[s\odot K_\mathbf{c}(\cdot,\zeta)b_2,K_\mathbf{d}(\cdot,\xi)b_1\right]_{\mathcal{W}(\mathbf{d})}\right)^\dag\\
              & = & \left(b_1^\dag s\odot K_\mathbf{c}(\xi,\zeta)b_2\right)^\dag\\
              & = & b_2^\dag \left(\sum_{\alpha\in\mathbb{N}_0^m} \frac{\xi(\zeta^\alpha \odot s(\zeta))^\dag}{c_\alpha}\right)^\dag b_1\\
              & = & b_2^\dag \left(\sum_{\alpha\in\mathbb{N}_0^m} \frac{(\zeta^\alpha \odot s(\zeta))\xi^\dag}{c_\alpha}\right) b_1.
\end{eqnarray*}
\end{proof}

\section{Drury-Arveson module}
\setcounter{equation}{0}
\label{sec-arveson}

In this section, we will study a particular case of a reproducing kernel module which has its coefficients $c_\alpha$ given by
\[
c_\alpha = \frac{\alpha!}{|\alpha|!},
\]
i.e., the module $\mathcal{W}(\mathbf{c})$ with reproducing kernel
\begin{equation}
K_\mathbf{c}(\zeta,\xi) = \sum_{\alpha\in\mathbb{N}_0^m}\frac{|\alpha|!}{\alpha!} \zeta^\alpha\left(\xi^\alpha\right)^\dag
\label{arveson-rk}
\end{equation}
and Hermitian form
\[
\left[f,g\right] = \sum_{\alpha\in\mathbb{N}_0^m} \frac{\alpha!}{|\alpha|!} g_\alpha^\dag f_\alpha,
\]
which is called the Drury-Arveson module.\smallskip

\begin{proposition}
For every $k\in\mathbb{Z}_m$, the operator $\mathcal{M}_{\zeta_k}$ is a contraction from $\mathcal{W}(\mathbf{c})$ into itself.
\end{proposition}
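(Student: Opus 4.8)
The plan is to reduce the operator inequality of Definition~\ref{def-cont-op} to an elementary comparison of the positive reals $c_\alpha=\alpha!/|\alpha|!$, working directly with the coefficients of Fueter series. First I would record how $\mathcal{M}_{\zeta_k}$ acts on coefficients: if $f(\zeta)=\sum_{\alpha\in\mathbb N_0^m}\zeta^\alpha f_\alpha$ lies in $\mathcal{W}(\mathbf{c})$, then by the definition of the Cauchy product $\odot$ at the origin $\zeta_k\odot(\zeta^\alpha f_\alpha)=\zeta^{\alpha+\iota_k}f_\alpha$, so that
\[
\mathcal{M}_{\zeta_k}f=\zeta_k\odot f=\sum_{\alpha\in\mathbb N_0^m}\zeta^{\alpha+\iota_k}f_\alpha ,
\]
i.e. the $\beta$-th coefficient of $\mathcal{M}_{\zeta_k}f$ equals $f_{\beta-\iota_k}$ when $\beta\ge\iota_k$ and $0$ otherwise. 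Using \eqref{Nab} and \eqref{basic-norm-prop} this gives $\|\mathcal{M}_{\zeta_k}f\|^2=\sum_\alpha c_{\alpha+\iota_k}N(f_\alpha)^2$, which in the next step is bounded by $\|f\|^2$; in particular $\mathcal{M}_{\zeta_k}f$ again lies in $\mathcal{W}(\mathbf{c})$ and, by density of $\stackrel{\circ}{\mathcal W}\hspace{-1mm}(\mathbf{c})$, the coefficient formulas are valid on all of $\mathcal{W}(\mathbf{c})$.

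Next I would compute the Hermitian form \eqref{innerprod}. Reindexing $\beta=\alpha+\iota_k$,
\[
[\mathcal{M}_{\zeta_k}f,\mathcal{M}_{\zeta_k}f]=\sum_{\beta\ge\iota_k}c_\beta\, f_{\beta-\iota_k}^\dag f_{\beta-\iota_k}=\sum_{\alpha\in\mathbb N_0^m}c_{\alpha+\iota_k}\,f_\alpha^\dag f_\alpha .
\]
The key arithmetic fact, special to the Drury--Arveson weights, is
\[
c_{\alpha+\iota_k}=\frac{(\alpha_k+1)\,\alpha!}{(|\alpha|+1)!}=\frac{\alpha_k+1}{|\alpha|+1}\,c_\alpha\le c_\alpha ,
\]
the inequality holding because $\alpha_k\le|\alpha|$. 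Subtracting,
\[
[f,f]-[\mathcal{M}_{\zeta_k}f,\mathcal{M}_{\zeta_k}f]=\sum_{\alpha\in\mathbb N_0^m}\frac{|\alpha|-\alpha_k}{|\alpha|+1}\,c_\alpha\,f_\alpha^\dag f_\alpha .
\]
Each summand is $\lambda_\alpha f_\alpha^\dag f_\alpha$ with $\lambda_\alpha\ge 0$ real, and $\lambda_\alpha f_\alpha^\dag f_\alpha=(\sqrt{\lambda_\alpha}\,f_\alpha^\dag)(\sqrt{\lambda_\alpha}\,f_\alpha^\dag)^\dag\succeq 0$ since $\dag$ is an involution and $\sqrt{\lambda_\alpha}\in\mathbb K$ is central; moreover $\sum_\alpha\lambda_\alpha N(f_\alpha)^2\le\sum_\alpha c_\alpha N(f_\alpha)^2=\|f\|^2<\infty$, so the series converges absolutely in the Banach algebra $\mathcal{A}$. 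Hence $[f,f]-[\mathcal{M}_{\zeta_k}f,\mathcal{M}_{\zeta_k}f]$ lies in the closed cone of positive elements, i.e. $[\mathcal{M}_{\zeta_k}f,\mathcal{M}_{\zeta_k}f]\preceq[f,f]$, which is the claim; the same estimate yields $\|\mathcal{M}_{\zeta_k}f\|\le\|f\|$.

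The computations themselves --- the reindexing, the factorial identity, and the bound $\tfrac{\alpha_k+1}{|\alpha|+1}\le 1$ --- are routine. The only point requiring care is conceptual rather than computational: one must fix the precise meaning of ``$\succeq 0$'' for an element of $\mathcal{A}$ that is not literally a single $aa^\dag$ but only a norm-convergent sum of such terms, and agree that $\preceq$ refers to membership in the closure of the cone generated by the elements $aa^\dag$. Once that convention is in place --- and it is the only one compatible with Definition~\ref{def-cont-op} --- the argument above is complete.
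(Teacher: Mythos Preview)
Your proof is correct and follows essentially the same route as the paper's: compute $[\mathcal{M}_{\zeta_k}f,\mathcal{M}_{\zeta_k}f]=\sum_\alpha\frac{\alpha_k+1}{|\alpha|+1}c_\alpha f_\alpha^\dag f_\alpha$, subtract from $[f,f]$, and write each remaining term as $(d_\alpha f_\alpha)^\dag(d_\alpha f_\alpha)$ with $d_\alpha=\sqrt{c_\alpha(|\alpha|-\alpha_k)/(|\alpha|+1)}$. You are slightly more careful than the paper in two places---you explicitly verify that $\mathcal{M}_{\zeta_k}f$ lands back in $\mathcal{W}(\mathbf{c})$, and you flag the issue of passing from finite sums $aa^\dag$ to a norm-convergent series---but the underlying argument is the same.
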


\begin{proof}
The proof follows from a direct computation. For every $f\in\mathcal{W}(\mathbf{c})$, it holds that
\begin{eqnarray*}
\left[\mathcal{M}_{\zeta_k}f,\mathcal{M}_{\zeta_k}f\right] & = & \sum_{\alpha\in\mathbb{N}_0^m} \left[ \zeta^{\alpha+\iota_k} f_\alpha, \zeta^{\alpha+\iota_k} f_\alpha \right] \\
             & = & \sum_{\alpha\in\mathbb{N}_0^m} \frac{(\alpha+\iota_k)!}{(|\alpha|+1)!} f_\alpha^\dag f_\alpha \\
             & = & \sum_{\alpha\in\mathbb{N}_0^m} \frac{\alpha_k+1}{|\alpha|+1}\left( c_\alpha f_\alpha^\dag f_\alpha\right).
\end{eqnarray*}

Observe that
\[
\frac{|\alpha|-\alpha_k}{|\alpha|+1} \geq 0
\]
for every $\alpha\in\mathbb{N}_0^m$ and, then, it admits a non-negative square root. Therefore, writing
\[
d_\alpha = \sqrt{\frac{c_\alpha(|\alpha|-\alpha_k)}{|\alpha|+1}},
\]
we conclude that
\begin{eqnarray*}
\left[f,f\right] - \left[\mathcal{M}_{\zeta_k}f,\mathcal{M}_{\zeta_k}f\right] & = & \sum_{\alpha\in\mathbb{N}_0^m} \frac{|\alpha|-\alpha_k}{|\alpha|+1} c_\alpha f_\alpha^\dag f_\alpha \\
             & = & \sum_{\alpha\in\mathbb{N}_0^m} \left(d_\alpha f_\alpha\right)^\dag \left(d_\alpha f_\alpha\right) \\
             & \succeq & 0,
\end{eqnarray*}
as we wanted to show.
\end{proof}

\begin{proposition}
For every $k\in\mathbb{Z}_m$, the adjoint of $\mathcal{M}_{\zeta_k}$ is the backward-shift operator $\R_k$, i.e.,
\[
\left[\R_kf,g\right] = \left[f,\mathcal{M}_{\zeta_k}g\right]
\]
for every $f,g\in\mathcal{W}(\mathbf{c})$.
\end{proposition}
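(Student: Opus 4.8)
The plan is to reduce the identity $[\R_k f,g]=[f,\mathcal{M}_{\zeta_k}g]$ to an equality of two series in the coefficients of $f$ and $g$, and to match them term by term after a single shift of the summation multi-index.

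First I would write $f(\zeta)=\sum_{\alpha\in\mathbb N_0^m}\zeta^\alpha f_\alpha$ and $g(\zeta)=\sum_{\alpha\in\mathbb N_0^m}\zeta^\alpha g_\alpha$ and compute the coefficients of the two functions that actually enter the forms. Proposition \ref{rk-ps}, applied with the center at the origin (so that $\xi=0$), gives
\[
(\R_k f)(\zeta)=\sum_{\substack{\alpha\in\mathbb N_0^m;\\ \alpha\ge\iota_k}}\frac{\alpha_k}{|\alpha|}\,\zeta^{\alpha-\iota_k}f_\alpha
=\sum_{\beta\in\mathbb N_0^m}\frac{\beta_k+1}{|\beta|+1}\,\zeta^{\beta}f_{\beta+\iota_k},
\]
so the $\beta$-th coefficient of $\R_k f$ is $\tfrac{\beta_k+1}{|\beta|+1}\,f_{\beta+\iota_k}$; on the other hand, from \eqref{multiplication} and the definition of $\odot$,
\[
(\mathcal{M}_{\zeta_k}g)(\zeta)=\zeta_k\odot g(\zeta)=\sum_{\alpha\in\mathbb N_0^m}\zeta^{\alpha+\iota_k}g_\alpha,
\]
so the $\gamma$-th coefficient of $\mathcal{M}_{\zeta_k}g$ is $g_{\gamma-\iota_k}$ when $\gamma\ge\iota_k$ and is $0$ otherwise. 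At this point I would also note that both sides of the claimed identity are genuinely defined in $\mathcal A$: $\mathcal{M}_{\zeta_k}$ is a contraction of $\mathcal{W}(\mathbf c)$ by the preceding proposition, and the bound $\tfrac{\beta_k+1}{|\beta|+1}\le 1$ together with the identity $c_{\beta+\iota_k}=\tfrac{\beta_k+1}{|\beta|+1}\,c_\beta$ (derived below) shows that $\R_k$ is a contraction of $\mathcal{W}(\mathbf c)$ as well, so by the Cauchy--Schwarz bound $N([u,v])\le\|u\|\,\|v\|$ both Hermitian forms converge.

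Next I would substitute these coefficients into the definition \eqref{innerprod} of the form, obtaining
\[
[\R_k f,g]=\sum_{\beta\in\mathbb N_0^m}c_\beta\,g_\beta^\dagger\!\left(\frac{\beta_k+1}{|\beta|+1}\,f_{\beta+\iota_k}\right),\qquad
[f,\mathcal{M}_{\zeta_k}g]=\sum_{\substack{\gamma\in\mathbb N_0^m;\\ \gamma\ge\iota_k}}c_\gamma\,g_{\gamma-\iota_k}^\dagger f_\gamma,
\]
and then reindex the second sum by $\gamma=\beta+\iota_k$. Using $(\beta+\iota_k)!=(\beta_k+1)\,\beta!$ and $(|\beta|+1)!=(|\beta|+1)\,|\beta|!$, which for the Drury--Arveson weights $c_\alpha=\alpha!/|\alpha|!$ yields
\[
c_{\beta+\iota_k}=\frac{(\beta+\iota_k)!}{(|\beta|+1)!}=\frac{\beta_k+1}{|\beta|+1}\cdot\frac{\beta!}{|\beta|!}=\frac{\beta_k+1}{|\beta|+1}\,c_\beta,
\]
the two series agree term by term, so $[\R_k f,g]=[f,\mathcal{M}_{\zeta_k}g]$. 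Together with the uniqueness of the adjoint (Proposition \ref{uniqueness}), this gives $\mathcal{M}_{\zeta_k}^{*}=\R_k$.

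I do not expect a real obstacle here; the computation is essentially bookkeeping. The one point to get right is the factorial identity $c_{\beta+\iota_k}=\tfrac{\beta_k+1}{|\beta|+1}c_\beta$ — this is exactly the arithmetic that turns the weight produced by $\R_k$ into the weight produced by $\mathcal{M}_{\zeta_k}$ — and the (routine) verification that $\R_k$ maps $\mathcal{W}(\mathbf c)$ into itself, so that the stated equality is an equality of actual elements of the module and not merely a formal identity of series.
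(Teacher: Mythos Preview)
Your proof is correct and follows essentially the same approach as the paper: both rest on the factorial identity $c_{\beta+\iota_k}=\tfrac{\beta_k+1}{|\beta|+1}\,c_\beta$ for the Drury--Arveson weights $c_\alpha=\alpha!/|\alpha|!$. The paper verifies the relation $[\R_k\zeta^\alpha,\zeta^\beta]=[\zeta^\alpha,\mathcal{M}_{\zeta_k}\zeta^\beta]$ on monomials and leaves the passage to general $f,g$ implicit, while you carry out the same computation directly at the level of the coefficient series (and add the useful remark that both operators are contractions, so the two forms actually converge in $\mathcal A$).
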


\begin{proof}
The proof is similar to the one for the quaternions seen in \cite{MR2124899}. First, observe that $\forall\beta\in\mathbb{N}_0^m$ and $\alpha\in\mathbb{N}_0^m$ such that $\alpha\ge\iota_k$, $k\in\mathbb{Z}_m$, we have
\[
\left[\R_k\zeta^\alpha,\zeta^\beta\right] = \left[\frac{\alpha_k}{|\alpha|}\zeta^{\alpha-\iota_k},\zeta^\beta\right] = \frac{(\beta+\iota_k)!}{(|\beta|+1)!} \delta_{\alpha-\iota_k,\beta} = \left[\zeta^\alpha,\zeta^{\beta+\iota_k}\right].
\]
Finally, if $\beta$ is such that $\beta_k=0$,
\[
\left[\R_k\zeta^\alpha,\zeta^\beta\right] = 0 = \left[\zeta^\alpha,\zeta^{\beta+\iota_k}\right] = \left[\zeta^\alpha,\mathcal{M}_{\zeta_k}\zeta^\beta\right].
\]
\end{proof}

Already preparing for the next section, let $C$ be the operator of evaluation at the origin, i.e., $Cf=f(0)$ for every $f\in\mathcal{W}(\mathbf{c})$. Then, the identity
\begin{equation}
I-\sum_{k\in\mathbb{Z}_m}\mathcal{M}_{\zeta_k} \mathcal{M}_{\zeta_k}^* = C^*C
\label{identity}
\end{equation}
holds.

\section{Blaschke factor}
\setcounter{equation}{0}
\label{sec-blaschke}

We denoted the transpose of $\xi = (\xi_1 \quad \xi_2 \quad \cdots \quad \xi_m)$ by
\[
\xi^* = \left(
\begin{array}{c}
\xi_1^\dag \\
\xi_2^\dag \\
\vdots \\
\xi_m^\dag
\end{array}
\right).
\]
For every $\xi$ such that $\left\|\xi\right\|_{\mathcal{A}^m} <1$, where the norm in $\mathcal{A}^m$ was defined in \eqref{norm-am} and can be written as
\[
\left\|\xi\right\|_{\mathcal{A}^m} = \left(\sum_{k=1}^m N(\xi_k)^2\right)^{1/2},
\]
we define the Blaschke factor $B_\xi$ as
\[
B_\xi = (1-\xi\xi^*)^{1/2}\odot(1-\zeta\xi^*)^{-\odot} \odot (\zeta-\xi)(I-\xi^*\xi)^{-1/2}.
\]

\begin{proposition}
The following identity holds in the Drury-Arveson module $\mathcal{W}(\mathbf{c})$:
\[
I-\mathcal{B}_\xi \mathcal{B}_\xi^* = (1-\xi\xi^*)^{1/2}\left(I-\mathcal{M}_\zeta \mathcal{M}_\xi^*\right)^{-1} C^*C \left(I-\mathcal{M}_\zeta \mathcal{M}_\xi^*\right)^{-*}(1-\xi\xi^*)^{1/2}.
\]
\end{proposition}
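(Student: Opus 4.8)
Our plan is to read the asserted equality as an operator identity in the Drury--Arveson module $\mathcal{W}(\mathbf c)$: realize $\mathcal{B}_\xi$ as the $\odot$-multiplication operator $\mathcal{M}_{B_\xi}$ from $\mathcal{W}(\mathbf c)^m$ into $\mathcal{W}(\mathbf c)$, factor it into elementary pieces whose adjoints we already control, and reduce the claim to a single Stein-type identity. The ingredients I would assemble first: $\mathcal{M}_\zeta\mathcal{M}_\xi^*$ is $\odot$-multiplication by the function $\zeta\xi^*=\sum_{k=1}^m\zeta_k\xi_k^\dagger$, so (since $\left\Vert\xi\right\Vert_{\mathcal{A}^m}<1$ makes the Neumann series $\sum_{n\ge 0}(\zeta\xi^*)^{\odot n}$ converge) $\mathcal{M}_{(1-\zeta\xi^*)^{-\odot}}=(I-\mathcal{M}_\zeta\mathcal{M}_\xi^*)^{-1}=:R$; the elements $1-\xi\xi^*\in\mathcal A$ and $I-\xi^*\xi\in\mathcal A^{m\times m}$ are $\dagger$-selfadjoint and, under the same hypothesis on $\xi$, invertible with $\dagger$-selfadjoint square roots defined by binomial series; the adjoint of $\mathcal{M}_{\zeta_k}$ is the backward shift $\R_k$; and, from \eqref{identity}, $\sum_k\mathcal{M}_{\zeta_k}\mathcal{M}_{\zeta_k}^*=I-C^*C$, together with $C\mathcal{M}_{\zeta_k}=0$ and $CC^*=\mathrm{id}_{\mathcal A}$. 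Writing $\mathcal{M}_{\zeta-\xi}$ for the row operator $(\mathcal{M}_{\zeta_1}-\xi_1,\dots,\mathcal{M}_{\zeta_m}-\xi_m)$, the definition of $B_\xi$ and the associativity of $\odot$ give the factorization $\mathcal{B}_\xi=(1-\xi\xi^*)^{1/2}\,R\,\mathcal{M}_{\zeta-\xi}\,(I-\xi^*\xi)^{-1/2}$ (constants understood as $\odot$-multiplication operators), whence
\[
\mathcal{B}_\xi\mathcal{B}_\xi^*=(1-\xi\xi^*)^{1/2}\,R\,\mathcal{M}_{\zeta-\xi}\,(I-\xi^*\xi)^{-1}\,\mathcal{M}_{\zeta-\xi}^*\,R^*\,(1-\xi\xi^*)^{1/2}.
\]

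Rearranging the claim, it becomes $I=(1-\xi\xi^*)^{1/2}R\,\bigl(C^*C+\mathcal{M}_{\zeta-\xi}(I-\xi^*\xi)^{-1}\mathcal{M}_{\zeta-\xi}^*\bigr)\,R^*(1-\xi\xi^*)^{1/2}$, and since $(1-\xi\xi^*)^{1/2}R$ is invertible this is equivalent to the Stein-type identity
\[
(I-\mathcal{M}_\zeta\mathcal{M}_\xi^*)\,(1-\xi\xi^*)^{-1}\,(I-\mathcal{M}_\zeta\mathcal{M}_\xi^*)^*=C^*C+\mathcal{M}_{\zeta-\xi}\,(I-\xi^*\xi)^{-1}\,\mathcal{M}_{\zeta-\xi}^* ,
\]
which involves only $\mathcal{M}_{\zeta_k}$, $\R_k$, $C$ and constants. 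To prove it I would expand both sides and substitute the Sherman--Morrison/Woodbury relations $(I-\xi^*\xi)^{-1}=I+\xi^*(1-\xi\xi^*)^{-1}\xi$ and $(1-\xi\xi^*)^{-1}=1+\xi(I-\xi^*\xi)^{-1}\xi^*$ (formal, needing only associativity) together with the push-through $\xi^\dagger(1-\xi\xi^*)^{-1}=(I-\xi^*\xi)^{-1}\xi^\dagger$; the fundamental identity $\sum_k\mathcal{M}_{\zeta_k}\mathcal{M}_{\zeta_k}^*=I-C^*C$ then kills the terms quadratic in the variables, the push-through together with $C\mathcal{M}_{\zeta_k}=0$ matches the terms linear in the variables, and $CC^*=\mathrm{id}_{\mathcal A}$ leaves exactly $C^*C$. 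A guiding check: applying the reduced identity to the constant function $1$, the left side gives the constant $(1-\xi\xi^*)^{-1}$ and the linear term $-\zeta\xi^*(1-\xi\xi^*)^{-1}$, the right side gives $1+\xi(I-\xi^*\xi)^{-1}\xi^*$ and $-\zeta(I-\xi^*\xi)^{-1}\xi^*$, and these agree term by term precisely by the two relations above.

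I expect the main difficulty to be organizational rather than conceptual: the whole manipulation has to be carried out with scrupulous attention to non-commutativity, keeping $\xi\xi^*=\sum_k\xi_k\xi_k^\dagger\in\mathcal A$ rigidly distinct from $\xi^*\xi=(\xi_j^\dagger\xi_k)_{j,k}\in\mathcal A^{m\times m}$, tracking left versus right multiplication so that the defect factors and the resolvents are never inadvertently commuted past one another, and treating the $\odot$-products in the regime where coefficients do not commute with the Fueter variables. A secondary point worth stating is the existence and invertibility of $(1-\xi\xi^*)^{\pm 1/2}\in\mathcal A$ and $(I-\xi^*\xi)^{\pm 1/2}\in\mathcal A^{m\times m}$ when $\left\Vert\xi\right\Vert_{\mathcal{A}^m}<1$, which is exactly why this is the natural domain of the Blaschke factor and follows from convergent binomial/Neumann series. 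An alternative route, closer to the reproducing-kernel computation in the last proposition of Section~\ref{sec-banach}, is to test the operator identity against the kernels $K_{\mathbf c}(\cdot,\eta)b$ and compare the resulting $\mathcal A$-valued expressions; the Woodbury relations are the engine of that argument as well.
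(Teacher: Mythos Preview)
Your approach is correct and arrives at the same place, but the paper organizes the computation differently. Instead of reducing to a Stein-type identity and verifying it by direct expansion via the Sherman--Morrison/Woodbury relations, the paper packages all of that algebra into a single device: the Halmos extension $\mathcal H$ of $-\mathcal M_\xi$, a $2\times 2$ block operator satisfying $\mathcal H J\mathcal H^*=J$ with $J=\mathrm{diag}(I,-I)$. Starting from \eqref{identity} written as $C^*C=(I\ \ \mathcal M_\zeta)J\begin{pmatrix}I\\ \mathcal M_\zeta^*\end{pmatrix}$ and inserting $\mathcal HJ\mathcal H^*$ in place of $J$, one obtains $C^*C=\mathcal X_1\mathcal X_1^*-\mathcal X_2\mathcal X_2^*$ with $\mathcal X_1=(I-\mathcal M_\zeta\mathcal M_\xi^*)(I-\mathcal M_\xi\mathcal M_\xi^*)^{-1/2}$ and $\mathcal X_2=(\mathcal M_\zeta-\mathcal M_\xi)(I-\mathcal M_\xi^*\mathcal M_\xi)^{-1/2}$; since $\mathcal X_1^{-1}\mathcal X_2=\mathcal B_\xi$, conjugating by $\mathcal X_1^{-1}$ gives the statement immediately. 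The two routes are really the same algebra: the $J$-unitarity of $\mathcal H$ is precisely an encoding of the Woodbury/push-through identities you invoke. What the Halmos packaging buys is that all the non-commutative bookkeeping you flag as the ``main difficulty'' is absorbed into the one-line verification $\mathcal HJ\mathcal H^*=J$, so nothing has to be expanded or matched term by term. What your route buys is transparency: each use of Woodbury and each cancellation is visible, and the sanity check against the constant function $1$ is a nice diagnostic that the Halmos argument hides.
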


\begin{proof}
The proof follows the one presented in \cite{akap1} and \cite{MR2124899}. First, observe that, because $\left\|\xi\right\|_{\mathcal{A}^m}<1$, the operators $I-\mathcal{M}_\xi\mathcal{M}_\xi^*$ and $I-\mathcal{M}_\xi^*\mathcal{M}_\xi$ are self-adjoint and contractive, their square root -- and the inverse of their square root -- exist.\smallskip

Defining the Halmos extension of $-\mathcal{M}_\xi$ according to \cite{Dym_CBMS}
\[
\mathcal{H} \equiv \left(
\begin{array}{c c}
\left(I-\mathcal{M}_\xi\mathcal{M}_\xi^*\right)^{-1/2} & -\mathcal{M}_\xi\left(I-\mathcal{M}_\xi^*\mathcal{M}_\xi\right)^{-1/2} \\
-\mathcal{M}_\xi^*\left(I-\mathcal{M}_\xi\mathcal{M}_\xi^*\right)^{-1/2} & \left(I-\mathcal{M}_\xi^*\mathcal{M}_\xi\right)^{-1/2}
\end{array}
\right)
\]
and setting
\[
J = \left(
\begin{array}{c c}
I_{\mathcal{W}(\mathbf{c})} & 0 \\
0 & -I_{\mathcal{W}(\mathbf{c})^m}
\end{array}
\right),
\]
the following holds:
\[
\mathcal{H}J\mathcal{H}^* = \mathcal{H}^*J\mathcal{H} = J.
\]
Then, using \eqref{identity},
\begin{eqnarray*}
C^*C & = & I - \mathcal{M}_\zeta\mathcal{M}_\zeta^* \\
     & = & \left(I \quad \mathcal{M}_\zeta\right) J \left(\begin{array}{c} I \\ \mathcal{M}_\zeta\end{array}\right) \\
     & = & \left(I \quad \mathcal{M}_\zeta\right) \mathcal{H}J\mathcal{H}^* \left(\begin{array}{c} I \\ \mathcal{M}_\zeta\end{array}\right) \\
     & = & \left(\mathcal{X}_1 \quad \mathcal{X}_2\right) J \left(\begin{array}{c} \mathcal{X}_1^* \\ \mathcal{X}_2^*\end{array}\right) \\
     & = & \mathcal{X}_1 \mathcal{X}_1^* - \mathcal{X}_2 \mathcal{X}_2^*,
\end{eqnarray*}
where
\[
\mathcal{X}_1 = \left(I-\mathcal{M}_\zeta\mathcal{M}_\xi^*\right) \left(I-\mathcal{M}_\xi\mathcal{M}_\xi^*\right)^{-1/2}
\]
and
\[
\mathcal{X}_2 = \left(\mathcal{M}_\zeta - \mathcal{M}_\xi\right) \left(I-\mathcal{M}_\xi^*\mathcal{M}_\xi\right)^{-1/2}.
\]

The proposition follows directly from the above expression.
\end{proof}

\section{Fock module}
\setcounter{equation}{0}
\label{sec-fock}

In this section, we study the Fock module, which is the particular module of power series $\mathcal{W}(\mathbf{c})$ for which $c_\alpha=\alpha!$. Therefore, its reproducing kernel is
\[
K_\mathbf{c}(\zeta,\xi) = \sum_{\alpha\in\mathbb{N}_0^m} \frac{1}{\alpha!} \zeta^\alpha (\xi^\alpha)^\dag
\]
and it is endowed with the Hermitian form
\[
\left[f,g\right] = \sum_{\alpha\in\mathbb{N}_0^m} \alpha! \ g_\alpha^\dag f_\alpha.
\]

Before presenting a result on the $\odot$-multiplication operator, we define the derivative operator $\partial_k$ in the following way
\[
\partial_k f = \sum_{\alpha\in\mathbb{N}_0^m}\alpha_k \zeta^{\alpha-\iota_k} f_\alpha.
\]
With that, the following result, which is analogous of a result in the classical Fock space, holds in our generalized setting.

\begin{proposition}
For every $k\in\mathbb{Z}_m$, the adjoint of $\mathcal{M}_{\zeta_k}$ in the Fock module $\mathcal{W}(\mathbf{c})$ is the derivative operator, i.e.,
\[
\left[\partial_k f, g\right] = \left[f, \mathcal{M}_{\zeta_k}g\right]
\]
for every $f,g\in\mathcal{W}(\mathbf{c})$.
\end{proposition}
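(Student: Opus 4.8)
The plan is to verify the adjoint relation on the monomial basis $\zeta^\beta$, $\beta\in\mathbb{N}_0^m$, and then extend by (bi)linearity and density, exactly as was done in the Drury--Arveson case in the previous section. First I would compute, for arbitrary $\alpha,\beta\in\mathbb{N}_0^m$, both sides of the claimed identity on basis elements. On one side, $\mathcal{M}_{\zeta_k}\zeta^\beta = \zeta^{\beta+\iota_k}$, so
\[
\left[\zeta^\alpha,\mathcal{M}_{\zeta_k}\zeta^\beta\right] = \left[\zeta^\alpha,\zeta^{\beta+\iota_k}\right] = (\beta+\iota_k)!\,\delta_{\alpha,\beta+\iota_k}.
\]
On the other side, $\partial_k\zeta^\alpha = \alpha_k\,\zeta^{\alpha-\iota_k}$ (interpreted as $0$ when $\alpha_k=0$), so
\[
\left[\partial_k\zeta^\alpha,\zeta^\beta\right] = \alpha_k\left[\zeta^{\alpha-\iota_k},\zeta^\beta\right] = \alpha_k\,\beta!\,\delta_{\alpha-\iota_k,\beta}.
\]
The two expressions are supported on the same set, namely $\alpha = \beta+\iota_k$, and there $\alpha_k = \beta_k+1$ while $(\beta+\iota_k)! = (\beta_k+1)\,\beta!$, so both equal $(\beta_k+1)\,\beta!$. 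This establishes the identity on the dense submodule $\stackrel{\circ}{\mathcal{W}}(\mathbf{c})$ of finite sums.

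Next I would assemble the general case: writing $f=\sum_\alpha \zeta^\alpha f_\alpha$ and $g=\sum_\beta \zeta^\beta g_\beta$, the relation $\left[\partial_k f,g\right]=\left[f,\mathcal{M}_{\zeta_k}g\right]$ follows by expanding both Hermitian forms as in \eqref{innerprod}, using the computation above term by term, and invoking the convergence/continuity of $[\cdot,\cdot]$ guaranteed by the Cauchy--Schwarz bound $N([f,g])\le\|f\|\cdot\|g\|$ proven earlier. Concretely, $\partial_k f = \sum_\alpha \alpha_k\zeta^{\alpha-\iota_k}f_\alpha$ has coefficients $(\partial_k f)_\gamma = (\gamma_k+1)f_{\gamma+\iota_k}$, while $\mathcal{M}_{\zeta_k}g = \zeta_k\odot g$ has coefficients $(\mathcal{M}_{\zeta_k}g)_\gamma = g_{\gamma-\iota_k}$ (zero if $\gamma_k=0$); plugging into $[\cdot,\cdot]$ and matching indices gives in both cases $\sum_{\gamma:\gamma_k\ge 1}\gamma!\,(\gamma_k)\,\ldots$ — more cleanly, both sides equal $\sum_{\gamma\in\mathbb{N}_0^m}(\gamma+\iota_k)!\,g_\gamma^\dag f_{\gamma+\iota_k}$. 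One should be slightly careful that $\partial_k f$ actually lies in $\mathcal{W}(\mathbf{c})$, or at least that the pairing $\left[\partial_k f,g\right]$ converges: since $c_{\gamma}=\gamma!$ and $(\gamma+\iota_k)!/\gamma! = \gamma_k+1$ grows only polynomially, $\|\partial_k f\|^2 = \sum_\gamma \gamma!\,(\gamma_k+1)^2 N(f_{\gamma+\iota_k})^2$ is controlled by $\|f\|^2$ up to the polynomial factor, so the pairing is well defined; this mirrors the bookkeeping already used for $\mathcal{M}_{\zeta_k}$ being bounded in the previous section.

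I expect the main (and only real) obstacle to be purely notational: tracking the shift by $\iota_k$ correctly in the factorial weights $\alpha!$ versus $(\alpha+\iota_k)!$, and handling the boundary terms where a component hits $0$. There is no noncommutativity subtlety here because the weights $c_\alpha=\alpha!$ are scalars in $\mathbb{K}$ and the only products of algebra elements appearing are $g_\alpha^\dag f_\alpha$, which are untouched by the manipulation. So the proof is a direct computation on the basis followed by a density/continuity argument, and I would present it in the same compact style as the corresponding proposition in Section~\ref{sec-arveson}.
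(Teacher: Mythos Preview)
Your approach is essentially identical to the paper's: both verify the identity on monomials $\zeta^\alpha$, $\zeta^\beta$ by computing $[\partial_k\zeta^\alpha,\zeta^\beta]=\alpha_k(\alpha-\iota_k)!\,\delta_{\alpha-\iota_k,\beta}=\alpha!\,\delta_{\alpha,\beta+\iota_k}=[\zeta^\alpha,\zeta^{\beta+\iota_k}]$. You go slightly further than the paper by discussing the extension to general $f,g$; note, however, that $\partial_k$ is actually unbounded on the Fock module (the factor $(\gamma_k+1)^2$ is not uniformly controlled), so your remark that the pairing is ``well defined'' needs the additional hypothesis that $f$ lies in the domain of $\partial_k$---a point the paper itself leaves implicit.
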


\begin{proof}
This follows from a direct computation. Let $\alpha,\beta\in\mathbb{N}_0^m$. Then,
\begin{eqnarray*}
\left[\partial_k \zeta^\alpha,\zeta^\beta\right] & = & \left[\alpha_k \zeta^{\alpha-\iota_k},\zeta^\beta\right] \\
             & = & (\alpha-\iota_k)! \ \alpha_k \ \delta_{\alpha-\iota_k,\beta} \\
             & = & \alpha! \ \delta_{\alpha,\beta+\iota_k} \\
             & = & \left[\zeta^\alpha,\zeta^{\beta+\iota_k}\right].
\end{eqnarray*}
\end{proof}

\section*{Acknowledgments}
Daniel Alpay thanks the Foster G. and Mary McGaw Professorship in Mathematical Sciences, which supported this research. Ismael L. Paiva acknowledges financial support from the Science without Borders program (CNPq/Brazil). Daniele C. Struppa thanks the Donald Bren Distinguished Chair in Mathematics, which supported this research.

\bibliographystyle{plain}
\bibliography{all}
\end{document}